\numberwithin{equation}{section}
\newcommand{\dw}{\dot{W}}
\newcommand{\ii}{\imath}
\newcommand{\1}{{\bf 1}}
\newcommand{\xiti}{\tilde{\xi}}
\newcommand{\lati}{\tilde{\la}}
\newcommand{\kti}{\tilde{K}}
\newcommand{\psiti}{\tilde{\psi}}
\newcommand{\bw}{\mathbf{W}}
\newcommand{\bchi}{\mathbf{\chi}}
\newcommand{\uchi}{\pmb{\chi}}
\newcommand{\uw}{\pmb{W}}
\newcommand{\scal}{\mathfrak{s}}
\newcommand{\tti}{\tilde{t}}
\newcommand{\yti}{\tilde{y}}
\newcommand{\lastchange}[1]{{#1}}
\newcommand{\E}{\mathbb E}
\newcommand{\R}{\mathbb R}
\newcommand{\PP}{\mathbb P}
\newcommand{\Z}{\mathbb Z}
\newcommand{\be}{\mathbf{E}}
\newcommand{\bh}{\mathbf{H}}
\newcommand{\cb}{\mathcal B}
\newcommand{\cac}{\mathcal C}
\newcommand{\cd}{\mathcal D}
\newcommand{\ce}{\mathcal E}
\newcommand{\cf}{\mathcal F}
\newcommand{\ci}{\mathcal I}
\newcommand{\cj}{\mathcal J}
\newcommand{\cn}{\mathcal N}
\newcommand{\cs}{\mathcal S}
\newcommand{\cfs}{\mathcal{F}^{\textsc s}}
\newcommand{\al}{\alpha}
\newcommand{\ga}{\gamma}
\newcommand{\ka}{\kappa}
\newcommand{\la}{\lambda}
\newcommand{\oom}{\Omega}
\newcommand{\si}{\sigma}
\newcommand{\vp}{\varphi}
\newcommand{\lp}{\left(}
\newcommand{\rp}{\right)}
\newtheorem{theorem}{Theorem}[section]
\newtheorem{definition}[theorem]{Definition}
\newtheorem{lemma}[theorem]{Lemma}
\newtheorem{proposition}[theorem]{Proposition}
\theoremstyle{remark}
\newtheorem{remark}[theorem]{Remark}
\theoremstyle{definition}
\newcommand{\bean}{\begin{eqnarray*}}
\newcommand{\eean}{\end{eqnarray*}}
\newcommand{\ben}{\begin{enumerate}}
\newcommand{\een}{\end{enumerate}}
\newcommand{\beq}{\begin{equation}}
\newcommand{\eeq}{\end{equation}}
\begin{document}

\title{A $K$-rough path above the space-time fractional Brownian motion}

\author[X. Chen \and A.  Deya \and C. Ouyang \and S. Tindel]
{Xia Chen \and Aur{\'e}lien  Deya \and Cheng Ouyang \and Samy Tindel}

\address{Xia Chen:  Department of Mathematics, University of Tennessee Knoxville,  TN 37996-1300, United States.}
\email{xchen@math.utk.edu}

\address{Aur\'elien Deya: Institut Elie Cartan, University of Lorraine
B.P. 239, 54506 Vandoeuvre-l\`es-Nancy, Cedex
France.}
\email{Aurelien.Deya@univ-lorraine.fr}

\address{Cheng Ouyang: Department of Mathematics, 
Statistics and Computer Science, University of Illinois at Chicago, Chicago, 
United States.}
\email{couyang@math.uic.edu}
\thanks{C. Ouyang is supported in part by Simons grant \#355480}

\address{Samy Tindel: Department of Mathematics, Purdue University, 150 N. University Street, West Lafayette, IN 47907, United States.}
\email{stindel@purdue.edu}
\thanks{S. Tindel is supported by the NSF grant  DMS-1952966.}

\keywords{parabolic Anderson model, regularity structures, Stratonovich equation, space-time fractional Brownian motion}

\subjclass[2010]{Primary: 60L30, 60L50, 60F10, 60K37}

\begin{abstract}
We construct a $K$-rough path (along the terminology of \cite[Definition 2.3]{De16}) above either a space-time or a spatial fractional Brownian motion, in any space dimension $d$. This allows us to provide an interpretation and a unique solution for the corresponding parabolic Anderson model, understood in the renormalized sense. We also consider the case of a spatial fractional noise.
\end{abstract}

\maketitle

\

\section{Introduction}\label{sec:intro}

The main objective of the analysis in this paper is to provide a wellposedness statement for the following parabolic Anderson model:

\begin{equation}\label{eq:she-intro}
\begin{cases}
\partial_{t} u_{t}(x) = \frac12 \Delta u_{t}(x) + u_{t}(x) \, \dot W_{t}(x), &\quad t\in\R_{+},\ x\in \R^{d},\\
u_0=\Psi
\end{cases}
\end{equation}
\textit{in situations where $\dot W$ corresponds to a space-time fractional noise of low regularity}.

\smallskip

Formally, the covariance function of such a noise $\dot{W}$ can be written as
\begin{equation}\label{eq:def-gamma}
\be\big[  \dw_{t}(x) \, \dw_{s}(y) \big]
=
\ga_{0}(t-s) \, \ga(x-y),
\end{equation}
with $\ga_0$ and $\ga$ the distributions, given in Fourier modes by
\begin{equation}\label{eq:rep-gamma-fourier}
\ga_{0}(t) =  c_{0}\int_{\R} e^{\ii \la t} |\la|^{1-2H_{0}}d\lambda
\quad\text{and}\quad
\ga(x) =  c_{\bh}\int_{\R^{d}} e^{\ii \xi \cdot x} \prod_{j=1}^{d} |\xi_{j}|^{1-2H_{j}}d\xi,
\end{equation}
where $\bh$ denotes the vector $(H_1,\ldots,H_d)$ and where $c_{H_0},c_{\bh}$ are the positive constants explicitly given by 
\begin{equation}\label{cstt-c-h}
c_{H_0}=\bigg(\int_{\R} d\xi \, \frac{|e^{\imath \xi}-1|^2}{|\xi|^{2H_i+1}}\bigg)^{-1/2} \,  , \quad c_{\bh}=\bigg(\prod_{i=1}^d \int_{\R} d\xi \, \frac{|e^{\imath \xi}-1|^2}{|\xi|^{2H_i+1}}\bigg)^{-1/2}.
\end{equation}

\smallskip

At this point, it should already be noted that a Skorohod interpretation and treatment of the model in the rough environment \eqref{eq:def-gamma} has recently been carried out by one of the authors in \cite{Ch18}, using a delicate analysis of intersection local times. We have then extended these considerations in~\cite{CDOT}, and therein provided sharp moment estimates on the Skorohod solution. 

\smallskip

In contrast with the latter investigations, we here would like to study equation \eqref{eq:she-intro} along a Stratonovich (or pathwise) interpretation. The basic idea behind this approach can be roughly expressed in terms of approximation procedures. Namely, we first introduce a sequence $\{\dot{W}^n; n\geq1\}$ of smooth approximations of $\dot{W}$, which can for instance be given by a mollyfing procedure
\begin{align}\label{def:approximate noise-intro}
\dw^n:=\partial_t  \partial_{x_1} \cdots \partial_{x_d} W^n, \quad \text{where} \ W^n:=\rho_n \ast W \ \text{and} \ \rho_n(s,x):=2^{n(d+2)}\rho(2^{2n} s,2^n x),
\end{align}
for some mollifier $\rho: \R^{d+1} \to \R_+$ satisfying standard regularity assumptions. Then consider the sequence $\{u^n; n\geq1\}$ of classical solutions associated with $\dot{W}^n$, that is $u^n$ is the solution of
\begin{equation*}
\partial_{t} u^n_{t}(x) = \frac12 \Delta u^n_{t}(x) + u^n_{t}(x) \, \dot W^n_{t}(x), \quad t\in\R_{+}, x\in \R^{d},
\end{equation*}
understood in the classical Lebesgue sense. From here, we would like to \textit{define} the Stratonovich solution of \eqref{eq:she-intro} as the limit of $u^n$ as $n\to \infty$. The whole question behind this definition is of course to determine under which conditions such a convergence can indeed be guaranteed.

\smallskip

As long as $\dot{W}$ is not too irregular, this pathwise-type strategy can be successfully implemented through the so-called Young framework (see e.g. \cite[Section 5]{HHNT}). 
If one then wants to extend the above considerations to more irregular noises, some sophisticated procedures based on higher-order expansions and renormalization tricks must be involved. The so-called \textit{theory of regularity structures}, introduced by Hairer in \cite{hai-14}, provides us with both a convenient setting and powerful tools to address this extension issue. In the sequel, we will thus rely on Hairer's ideas to properly formulate and analyze the questions raised by equation \eqref{eq:she-intro} in a rough environment. 

\smallskip

This approach was already used in a similar fractional setting by one of the authors (see \cite{De16,De17}), so as to handle the one-dimensional non-linear heat model
\begin{equation}\label{eq:she-non-lin}
\partial_{t} u_{t}(x) = \frac12 \Delta u_{t}(x) + \si(x,u_{t}(x)) \, \dot W_{t}(x), \quad t\in [0,T], x\in \R,
\end{equation}
where $\si:\R \times \R \to \R$ is a smooth bounded function with compact support in its first variable, and $T$ is a small enough time. The latter assumptions clearly do not cover the model under consideration (i.e., equation \eqref{eq:she-intro}), and accordingly further work is required here. 

\smallskip

An important novelty to tackle in this situation is the \enquote{non-compactness} of the perturbation term $u \, \dot W$, as opposed to $\si(.,u) \, \dot W$ in \eqref{eq:she-non-lin} or to the torus framework that prevails in \cite{hai-14}. A natural idea to cope with this additional difficulty consists in the involvement of weighted topologies in the analysis. In the Young setting, such a weighted treatment of the model can be found in \cite[Section 5]{HHNT}. The basis of the corresponding analysis for the rough situation have been laid by Hairer and Labb{\'e} in \cite{HL}, with stochastic applications focusing on the white noise situation. 

\smallskip

Through the subsequent investigations, we propose to extend the application of the formalism of~\cite{HL} to the fractional situation, and thus provide a Stratonovich counterpart of the considerations of \cite{Ch18} regarding the Skorohod setting. In turn, the constructions below will be used as the starting point of the comparison procedure performed in \cite[Section 4]{CDOT}, and ultimately leading to new moment estimates for the solution of \eqref{eq:she-intro}.

\smallskip

Let us now specify the range of Hurst indexes $H_0,H_1,\ldots,H_d$, i.e. (morally) the range of regularities for $\dot{W}$, covered by the analysis in this paper. We recall first that the above-mentioned Young treatment of the model can be considered as long as $2H_0+H_1+\dots+H_d >d+1$ (see \cite[Section 5]{HHNT} or \cite[Section 5]{De16}). We here intend to focus on the next stage of the regularity-structure approach to the problem, which precisely corresponds to the condition
\begin{equation}\label{cond-h-strato-intro}
d+\frac23< 2H_0+H \leq d+1 , \quad \text{where} \ H:=\sum_{i=1}^d H_i\, . 
\end{equation}
The reason behind the restriction $2H_0+H>d+\frac23$ will become clear through the developments of Sections \ref{sec:setting} and \ref{sec:main-results} (see also Remark \ref{rk:extension-third-order} about possible extensions of the covering). Moreover, as we will observe it in the sequel, a drastic change of regime is to occur during the transition from the Young case to the \enquote{rough} case \eqref{cond-h-strato-intro}, with the involvement of a central second-order process above the fractional noise, the so-called $K$-L{\'e}vy area (see Definition \ref{defi:k-rp}). To some extent, and as suggested by our terminology, this change-of-regime phenomenon can be compared with the insight offered by the rough paths theory for the standard fractional differential equation
\begin{equation}\label{sde}
dY_t=\si(Y_t) dW_t\, ,
\end{equation}
where $W$ is a (standard) fractional Brownian motion of Hurst index $H\in (0,1)$. Indeed, it is a well-known fact that, when studying \eqref{sde}, the transition from the Young case $H>\frac12$ to the (first) rough case $\frac13<H\leq \frac12$ also involves the consideration of an additional (and crucial) L{\'e}vy-area term. 

\

Note that in order to avoid a long presentation of the numerous objects at the core of the original theory of regularity structures (model spaces, structure groups, regularity structures,...), we will rely in the sequel on the more direct \textit{$K$-rough paths} terminology introduced in \cite{De16}.

\

The rest of the paper is organized as follows. In Section \ref{sec:setting}, we introduce the framework of the analysis, and then rephrase the general well-posedness criterion of \cite{HL} using the $K$-rough paths terminology (Theorem \ref{theo:solution-map}). Our main result, namely the existence of such a $K$-rough path above the fractional noise, is presented in Section \ref{sec:main-results}, first in the space-time-noise situation (Section \ref{subsec:appli-st-fn}), then in the spatial-noise case (Section \ref{subsec:spatial-noise}). These statements will lead us to the desired Stratonovich solution of equation \eqref{eq:she-intro} (Definitions \ref{defi:strato-sol} and \ref{defi:strato-sol-spa}). The details of the construction of the fractional $K$-rough path in the space-time situation, resp. the spatial situation, will be provided in Section \ref{sec:stochastic-constructions}, resp. Section \ref{sec:stochastic-constructions-spa}. Finally, the appendix section contains the proofs of two useful technical results.

\section{Framework of the analysis}\label{sec:setting}

\subsection{General notation}\label{not:general}

For the sake of clarity, let us start by specifying a few pieces of notation that will be used throughout the study.

\smallskip

First, note that two different kinds of Fourier transforms on $\R^{d+1}$ will be involved in the sequel. Namely for a function $f(t,x)$ on $\R^{d+1}$, the Fourier transform on the full space-time domain $\R^{d+1}$ is
defined with the normalization
\begin{align}\label{def:Fourier transform} 
\mathcal{F} f (\eta, \xi) = \int_{\mathbb{R}^{d+1}} e^{- \imath\, 
   (t\eta+\xi\cdot x) } f (t,x) dtd x, 
   \end{align}
The analysis will also rely, at some point, on the spatial Fourier transform $\mathcal{F}^{\textsc s}$ given by
\begin{align}
\label{def:space Fourier transform} 
\mathcal{F}^{\textsc s} f (t, \xi) = \int_{\mathbb{R}^{d}} e^{- \imath\, 
   \xi\cdot x } f (t,x) d x. 
  \end{align}

Regarding the stochastic setting, we denote by $(\oom,\cf,\PP)$ the probability space related to $W$, with $\E$ for the related expected value. 
The heat kernel on $\R^{d}$ is denoted by $p_{t}(x)$, and recall that
\begin{equation}\label{eq:heat-kernel}
p_{t}(x) = \frac{1}{(2\pi t)^{d/2}} \, \exp\lp -\frac{|x|^{2}}{2t}  \rp.
\end{equation}
Also notice that the inner product of $a,b\in\R^d$ is written as $a\cdot b$ throughout the paper.

\smallskip

As mentioned in the introduction, we write $\bh$ for the vector of space Hurst parameters $(H_1,\dots,H_d)$, and denote the sum of these parameters as
\begin{equation}\label{defi:sum-h}
H = \sum_{j=1}^{d} H_{j}.
\end{equation}
Following the convention in \cite{hai-14}, the below considerations on the theory of regularity structures will occasionally appeal to the parabolic norm, defined for every $(s,x)\in \R^{d+1}$ as 
\begin{align}\label{parabolic norm}\|(s,x)\|_\scal:=\max\big( \sqrt{|s|}, |x_1|,\ldots,|x_d|\big) \, .\end{align}
Finally, we write $a\lesssim b$ to indicate that there exists an irrelevant constant $c$ such that $a\leq c b$.

\subsection{Weighted Besov topologies and $K$-rough paths}\label{subsec:topo-rs}

Our purpose in this section is to give an as-compact-as-possible presentation of the  regularity structures framework. As we mentioned above, the formalism is presented here in its weighted version (following \cite{HL}). Of course, we will only focus on its application to the dynamics under consideration,  that is to the model
\begin{equation}\label{eq:she-xi}
\left\{
\begin{array}{l}
\partial_{t} u  = \frac12 \Delta u + u \, \chi \, , \quad t\in [0,T] \, ,\,  x\in \R^{d} \, ,\\
u_0(x)=\psi(x) \ ,
\end{array}
\right.
\end{equation}
with $\chi$ a distribution of order $\al <0$ to be specified (at this point, the equation is only formal anyway). This customization of the theory will lead us to the introduction of a fundamental object at the core of the machinery: the $K$-rough path (see Definition~\ref{defi:k-rp} below).

The weights considered in the sequel have to satisfy a growth assumption which is summarized in the following definition.

\begin{definition}\label{def: weights}
A function $w:\R^d \to [1,\infty)$ is a \emph{weight on $\R^d$} if for every $M>0$, there exist $c_{1,M},c_{2,M}>0$ such that for every $x,y\in \R^d$ with $|x-y|\leq M$, one has
$$c_{1,M} \leq \frac{w(x)}{w(y)} \leq c_{2,M} \ .$$
\end{definition}
\noindent
Given a weight $w\in \R^d$, we will henceforth denote by $L^\infty_w(\R^{d+1})$ the space of functions defined by
\begin{align}\label{def: space-L^infty_w}
L^\infty_w(\R^{d+1})=\Big\{
f: \R^{d+1}\to\R;\  \mathrm{for\ all}\ T>0,
\  \sup_{(s,x)\in [-T,T]\times \R^{d}} \frac{|f_s(x)|}{w(x)} < \infty\Big\}.
\end{align}
We also write $\cac^0_w(\R^{d+1})$ for the set of continuous functions in $L^\infty_w(\R^{d+1})$.

Let us now turn to the definition of the (weighted) Besov-type spaces of distributions involved in Hairer's theory. Consider first the case of a positive order $\la\in (0,1)$:

\begin{definition}\label{def:besov-space-positive}
Let $w$ be a weight on $\R^d$. For every $\la\in (0,1)$, we will say that a function $\theta:\R^{d+1}\to \R$ belongs to $\cac^{\lambda}_w(\R^{d+1})$ if for every $T>0$,
$$\|\theta\|_{\la;T,w}:=\sup_{(s,x)\in [-T,T]\times \R^{d}} \frac{|\theta(s,x)|}{w(x)}+\sup_{((s,x), (t,y))\in D_{T,2}}\frac{|\theta(s,x)-\theta(t,y)|}{w(y) \|(s,x)-(t,y)\|_\scal^\la} \, < \, \infty \, ,$$
where we recall that the norm $\|\cdot\|_\scal$ is defined in \eqref{parabolic norm} and where the domain $D_{T,2}$ is defined by
\begin{multline}
D_{T,2} 
:=\Big\{((s,x),(t,y))\in\R^{d+1}\times\R^{d+1}; \, s,t\in[-T,T], {(s,x)\neq (t,y)\ \mathrm{and}  \ \|(s,x)-(t,y)\|_\scal \leq 2}\Big\}.
\end{multline}
\end{definition}

In order to define spaces of negative orders, we first need to recall the following notation for a scaling operator. Namely for all $\delta >0$, $(s,x),(t,y)\in \R^{d+1}$ and $\psi:\R^{d+1} \to \R$, denote
\begin{equation}\label{scaling-operator}
(\cs^\delta_{s,x} \psi)(t,y):=\delta^{-(d+2)} \vp\big(\delta^{-2}(t-s),\delta^{-1}(y-x)\big) \ .
\end{equation}
Also, for every $\ell \geq 0$, we will need to consider a specific set of compactly supported functions:
\begin{align}\label{cal B^l_s}
\cb_\scal^\ell=\{ \psi\in \cac^\ell(\R^{d+1});\  \mathrm{Supp}(\psi)\subset\cb_\scal(0,1)\ \mathrm{and}\ \|\psi\|_{\cac^\ell}\leq 1\},
\end{align}
where $\cac^\ell(\R^{d+1})$ refers to the space of $\ell$-times differentiable functions on $\R^{d+1}$, 
$$\|\psi\|_{\cac^\ell}:=\sup \big\{\|\partial_{x_{i_1}}\cdots \partial_{x_{i_k}}\psi \|_\infty, \ 0\leq k\leq \ell, \ i_1,\ldots,i_k\in \{1,\ldots,d+1\}\big\} \, ,$$
and $\cb_\scal(0,1)$ stands for the unit ball in $\R^{d+1}$ associated with the parabolic norm \eqref{parabolic norm}. Finally, we denote by $\cac^\ell_\infty(\R^{d+1})$ the space of $\ell$-times differentiable functions (on $\R^{d+1}$) with bounded derivatives, and define $\cd'_{\ell}(\R^{d+1})$ as the dual space of $\cac^\ell_\infty(\R^{d+1})$. With those additional notions in hand, we now give the definition of distributions with negative H\"{o}lder type continuity which is used in the sequel.
\begin{definition}\label{def:besov-space}
Let $w$ be a weight on $\R^d$ as given in Definition \ref{def: weights}. For every $\al <0$, we will say that a distribution $\chi\in \cd'(\R^{d+1})$ belongs to $\cac^{\al}_w(\R^{d+1})$ if it belongs to $\cd'_{2(d+1)}(\R^{d+1})$ and if for every $T>0$,
\begin{equation}\label{regu-psi-illu}
\|\chi\|_{\al;T,w}:=\sup_{(s,x)\in [-T,T]\times \R^{d}} \sup_{\vp\in \cb^{2(d+1)}_\scal} \sup_{\delta\in (0,1]} \frac{|\langle \chi,\cs^\delta_{s,x} \vp \rangle |}{\delta^\al w(x) } \, < \, \infty \, .
\end{equation}
\end{definition}

\begin{remark}
As can be seen from  \eqref{regu-psi-illu} we are considering topologies that are \enquote{localized} in time, and global, but \enquote{weighted}, in space. Besides, note that the choice of the regularity $2(d+1)$ in the condition $\chi\in\mathcal{D}'_{2(d+1)}$ is somewhat arbitrary. In fact, for the deterministic part of the analysis, we could replace this condition with $\chi\in \cd'_{r}(\R^{d+1})$ for any finite $r\geq 1$, as explained in \cite{hai-14}. The $2(d+1)$-regularity will only prove useful in the stochastic constructions of Section \ref{sec:stochastic-constructions} (see for instance Lemma \ref{lem:fouri-psi-prop}).
\end{remark}

The following topological spaces, which somehow correspond to \enquote{lifted versions} of $\cac^{\al}_w(\R^{d+1})$, will later accommodate the central $K$-rough paths:

\begin{definition}\label{def:besov-space-2}
Let $w$ be a weight on $\R^d$. For every $\al <0$, we say that a map $\zeta:\R^{d+1} \to  \cd'(\R^{d+1})$ belongs to $\pmb{\cac}_w^\al(\R^{d+1})$ if for every $(s,x)\in \R^{d+1}$, $\zeta_{s,x}$ belongs to $\cd'_{2(d+1)}(\R^{d+1})$ and if, for every $T>0$,
$$\|\zeta\|_{\al;T,w}:=\sup_{(s,x)\in [-T,T]\times \R^{d}} \sup_{\vp\in \cb^{2(d+1)}_\scal} \sup_{\delta\in (0,1]} \frac{|\langle \zeta_{s,x},\cs^\delta_{s,x} \vp \rangle |}{\delta^\al w(x)}\, < \, \infty \, ,$$
where the sets $\mathcal{B}^l_\scal$ are given by \eqref{cal B^l_s}.
\end{definition}

We still need one last technical ingredient in the procedure: the definition of a \emph{localized heat kernel}, which essentially transcribes the singular behavior of the (global) heat kernel around $(0,0)$. 

\begin{definition}\label{defi:loc-heat-ker}
We call a \emph{localized heat kernel} any function $K:\R^{d+1} \backslash \{0\}\to \R$ satisfying the following conditions:

\noindent
\emph{(i)} It holds that $p_s(x)=K(s,x)+R(s,x)$, for some \enquote{remainder} $R\in \cac^\infty(\R^{d+1})$, where we recall that the heat kernel $p$ is defined by \eqref{eq:heat-kernel}.

\noindent
\emph{(ii)} $K(s,x)=0$ as soon as $s\leq 0$.

\noindent
\emph{(iii)} There exists a smooth function $K_0:\R^{d+1} \to \R$ with support in $[-1,1]^{d+1}$ such that for every non-zero $(s,x) \in \R^{d+1}$, one has 
\begin{equation}\label{decompo-k}
K(s,x)=\sum_{\ell\geq 0} 2^{-2\ell} (\cs_{0,0}^{2^{-\ell}}K_0)(s,x) \quad \text{and} \quad R(s,x)=\sum_{\ell < 0} 2^{-2\ell} (\cs_{0,0}^{2^{-\ell}}K_0)(s,x) \ .
\end{equation}
\end{definition}

We are finally in a position to introduce the key object of the machinery, namely a distribution in the second chaos of the noise $\chi$ which plays the role of the L\'{e}vy area in our context.

\begin{definition}\label{defi:k-rp}
Let $w$ be a weight on $\R^d$ (see Definition \ref{def: weights}), let $K$ be a localized heat kernel (see Definition \ref{defi:loc-heat-ker}) and consider $\al <0$. Also, fix $\chi\in \cac^{\al}_w(\R^{d+1})$. We call an \emph{$(\al,K)$-L{\'e}vy area above $\chi$} (\emph{for the weight $w$}) any map $\mathcal{A} : \R^{d+1} \to \cd'(\R^{d+1})$ satisfying the two following conditions.

\noindent
(i) \emph{$K$-Chen relation:} For all $(s,x),(t,y)\in \R^{d+1}$, 
$$\mathcal{A}_{s,x}-\mathcal{A}_{t,y}=[(K\ast \chi)(t,y)-(K\ast \chi)(s,x)] \cdot \chi \ ,$$
where the notation $\ast$ refers to the space-time convolution.

\noindent
(ii) \emph{Besov regularity:} $\mathcal{A}$ belongs to $\pmb{\cac}^{2\al+2}_w(\R^{d+1})$, where the space $\pmb{\cac}^{2\al+2}_w$ is introduced in Definition~\ref{def:besov-space-2}.

\noindent
We call \emph{$(\alpha,K)$-rough path} above $\chi$ (for the weight $w$) any pair $\uchi=(\chi,\bchi^{\mathbf{2}})$ where $\chi \in \cac^{\al}_w(\R^{d+1})$ and $\bchi^{\mathbf{2}}$ is an $(\al,K)$-L{\'e}vy area above $\chi$ (for the weight $w$). 
We denote by $\mathcal{E}^{K}_{\al;w}$ the set of such $(\alpha,K)$-rough paths (for the weight $w$). If $\uchi=(\chi,\bchi^{\mathbf{2}}),\pmb{\zeta}=(\zeta,\zeta^{\mathbf{2}}) \in \mathcal{E}^{K}_{\al;w}$, we set
$$\|\uchi,\pmb{\zeta}\|_{\al;T,w}:=\|\chi-\zeta\|_{\al;T,w}+\|\bchi^{ \mathbf{2}}-\zeta^{ \mathbf{2}}\|_{2\al+2;T,w} \, .
$$
A global distance on $\mathcal{E}^{K}_{\al;w}$ is then given by
\begin{equation}\label{def: global norm-rp}
d_{\al;w}(\uchi,\pmb{\zeta})=\sum_{k\geq 1}2^{-k} \frac{\|\uchi,\pmb{\zeta}\|_{\al;k,w}}{1+\|\uchi,\pmb{\zeta}\|_{\al;k,w}} \ .
\end{equation}
\end{definition}

By mimicking the arguments of the proof of \cite[Proposition 3.1]{De16}, we immediately deduce the following completeness property:
\begin{lemma}\label{complet}
For every weight $w$ on $\R^d$, every localized heat kernel $K$ and every $\al <0$, $(\mathcal{E}^{K}_{\al;w},d_{\al;w})$ is a complete metric space.
\end{lemma}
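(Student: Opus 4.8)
The plan is to show that $(\mathcal{E}^{K}_{\al;w},d_{\al;w})$ is complete by reducing it, as the authors suggest, to the completeness of the ambient weighted Besov spaces $\cac^\al_w(\R^{d+1})$ and $\pmb{\cac}^{2\al+2}_w(\R^{d+1})$, and then checking that the defining constraints of an $(\al,K)$-rough path (the $K$-Chen relation together with the Besov regularity of the L\'evy area) are preserved under passage to the limit. First I would recall (or observe, by the standard argument) that $(\cac^\al_w,\|\cdot\|_{\al;T,w})$ and $(\pmb{\cac}^{2\al+2}_w,\|\cdot\|_{2\al+2;T,w})$ are complete for each fixed $T$: a Cauchy sequence in these norms is, for each test configuration $(s,x)$, $\vp\in\cb^{2(d+1)}_\scal$, $\delta\in(0,1]$, a Cauchy sequence of real numbers with a uniform modulus, hence converges, and the limit pairing defines an element of $\cd'_{2(d+1)}(\R^{d+1})$ with the requisite bound; the weighted/time-localized structure does not obstruct this. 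The metric $d_{\al;w}$ of \eqref{def: global norm-rp} metrizes the topology of convergence in $\|\cdot\|_{\al;k,w}$ for every $k\geq 1$, and a sequence is $d_{\al;w}$-Cauchy iff it is $\|\cdot\|_{\al;k,w}$-Cauchy for each $k$, so it suffices to produce a limit with respect to each of these seminorms simultaneously.

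Next, given a $d_{\al;w}$-Cauchy sequence $\uchi^{(n)}=(\chi^{(n)},\bchi^{\mathbf{2},(n)})$, I would extract the componentwise limits: $\chi^{(n)}\to\chi$ in $\cac^\al_w(\R^{d+1})$ and $\bchi^{\mathbf{2},(n)}\to\mathcal{A}$ in $\pmb{\cac}^{2\al+2}_w(\R^{d+1})$, using the completeness of the two ambient spaces just discussed. It then remains to verify that $\uchi:=(\chi,\mathcal{A})$ actually lies in $\mathcal{E}^{K}_{\al;w}$, i.e. that $\mathcal{A}$ is an $(\al,K)$-L\'evy area above $\chi$. Condition (ii) of Definition \ref{defi:k-rp} is automatic since $\mathcal{A}\in\pmb{\cac}^{2\al+2}_w$ by construction. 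For condition (i), the $K$-Chen relation, I would pass to the limit in the identity
$$\bchi^{\mathbf{2},(n)}_{s,x}-\bchi^{\mathbf{2},(n)}_{t,y}=\big[(K\ast\chi^{(n)})(t,y)-(K\ast\chi^{(n)})(s,x)\big]\cdot\chi^{(n)},$$
tested against an arbitrary rescaled bump $\cs^\delta_{s,x}\vp$. The left side converges by the $\pmb{\cac}^{2\al+2}_w$-convergence of $\bchi^{\mathbf{2},(n)}$; on the right side one needs that $(K\ast\chi^{(n)})(t,y)\to(K\ast\chi^{(n)})$ evaluated pointwise and that $\chi^{(n)}\to\chi$ in a way compatible with multiplication by these (smooth, hence harmless) scalar factors. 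Since $K$ is a localized heat kernel, the decomposition \eqref{decompo-k} shows $K\ast\chi^{(n)}$ is a bona fide function whose value at a point $(t,y)$ is a continuous linear functional of $\chi^{(n)}$ against a fixed test function, and convergence of $\chi^{(n)}$ in $\cac^\al_w$ (in particular testing against fixed $\cac^{2(d+1)}_\infty$ functions) gives $(K\ast\chi^{(n)})(t,y)\to(K\ast\chi)(t,y)$, uniformly on compacts; combined with $\chi^{(n)}\to\chi$ as distributions this yields the limiting Chen relation for $(\chi,\mathcal{A})$.

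Finally, I would note that the convergence $\uchi^{(n)}\to\uchi$ holds in the $d_{\al;w}$-distance by definition of $\|\uchi^{(n)},\uchi\|_{\al;k,w}=\|\chi^{(n)}-\chi\|_{\al;k,w}+\|\bchi^{\mathbf{2},(n)}-\mathcal{A}\|_{2\al+2;k,w}\to 0$ for every $k$, and then $d_{\al;w}(\uchi^{(n)},\uchi)\to 0$ by dominated convergence in the series \eqref{def: global norm-rp}. I expect the only genuinely nontrivial point to be the stability of the $K$-Chen relation under the limit, specifically making sure that the bilinear right-hand side $[(K\ast\chi^{(n)})(t,y)-(K\ast\chi^{(n)})(s,x)]\cdot\chi^{(n)}$ converges when tested against $\cs^\delta_{s,x}\vp$; this is where one uses that the scalar factor $(K\ast\chi^{(n)})(t,y)-(K\ast\chi^{(n)})(s,x)$ is bounded on the relevant compact time window (uniformly in $n$, by the $\cac^\al_w$-Cauchy property and the kernel bounds coming from \eqref{decompo-k}) so that one can split the limit of a product into the product of limits. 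Everything else is the routine transfer of completeness through a closed subset cut out by (semi)continuous constraints, exactly as in the proof of \cite[Proposition 3.1]{De16}.
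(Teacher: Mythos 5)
Your overall plan — reduce to completeness of the ambient spaces and then check that the $K$-Chen relation survives the limit — is the natural route and is what the paper gestures at by invoking \cite[Proposition~3.1]{De16}. However, the preliminary claim that $(\pmb{\cac}^{2\al+2}_w,\|\cdot\|_{2\al+2;T,w})$ is complete ``by the standard argument'' is not correct, and this is where the genuine content of the lemma lives. For a map $\zeta\in\pmb{\cac}^{2\al+2}_w$, each $\zeta_{s,x}$ must be a \emph{global} element of $\cd'_{2(d+1)}(\R^{d+1})$, but the seminorm of Definition~\ref{def:besov-space-2} only controls the pairings $\langle\zeta_{s,x},\cs^\delta_{s,x}\vp\rangle$ with $\delta\le 1$, i.e.\ the restriction of $\zeta_{s,x}$ to the unit parabolic ball around its \emph{own} base point $(s,x)$. (This is in contrast with $\cac^\al_w$, where the base point itself ranges over all of $\R^{d+1}$, so a partition of unity recovers a global distribution.) Consequently $\|\cdot\|_{2\al+2;T,w}$ does not separate points of $\pmb{\cac}^{2\al+2}_w$, and a $\|\cdot\|_{2\al+2;T,w}$-Cauchy sequence of second components has no canonical limit in that space on its own. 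Extracting ``the componentwise limit $\mathcal{A}$'' as you propose is therefore not available before the Chen relation has been used.

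The repair is exactly the $K$-Chen relation you discuss afterwards, but its role is not merely to be ``verified in the limit'': it is what \emph{constructs} the limit $\mathcal{A}_{s,x}$ away from $(s,x)$. For $\psi$ supported near $(t,y)$ one must \emph{define}
\begin{equation*}
\langle\mathcal{A}_{s,x},\psi\rangle \;:=\; \lim_{n\to\infty}\Big\{ \langle\bchi^{\mathbf{2},(n)}_{t,y},\psi\rangle + \big[(K\ast\chi^{(n)})(t,y)-(K\ast\chi^{(n)})(s,x)\big]\,\langle\chi^{(n)},\psi\rangle \Big\},
\end{equation*}
where the first term converges by the Cauchy property \emph{at base point $(t,y)$}, and the second because $\chi^{(n)}\to\chi$ in $\cac^\al_w$ and because $K\ast\chi^{(n)}$ is a genuine function whose pointwise convergence is controlled as in \eqref{bou-k-ast} (note in passing that, because of the singularity of $K$ at the origin, $(K\ast\chi^{(n)})(t,y)$ is not the pairing of $\chi^{(n)}$ against a single fixed test function, but a convergent sum over the dyadic bumps in \eqref{decompo-k}, which requires $\al>-2$). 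One then checks, using the cocycle structure of the Chen relation for the $\bchi^{\mathbf{2},(n)}$, that this definition is independent of the chosen $(t,y)$, glues into a global distribution, inherits the $\pmb{\cac}^{2\al+2}_w$ bound, and satisfies the Chen relation for $(\chi,\mathcal{A})$. Your closing paragraph already has all the ingredients of this computation, so the gap is organizational rather than conceptual: the limit object has to be built via the Chen relation, not assumed to exist in a complete ambient space.
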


Let us complete Definition \ref{defi:k-rp} with two fundamental remarks, that often turn out to be essential in the application of the theory.

\begin{remark}\label{rk:levy-area-smooth}
Recall that the space $L^\infty_w(\R^{d+1})$ is defined by \eqref{def: space-L^infty_w}.  In the \enquote{regular} situation where $\chi\in L^\infty_w(\R^{d+1})$, there exists a straightforward \textit{canonical $K$-L{\'e}vy area above $\chi$} (for the weight $w^2$)  given by the formula 
\begin{equation}\label{levy-area-smoo}
\bchi^{\mathbf{2}}_{s,x}(t,y):=[(K\ast \chi)(t,y)-(K\ast \chi)(s,x)]\cdot \chi(t,y) \ ,
\end{equation}
where we recall that $\ast$ refers to space-time convolution in this setting. The resulting \textit{canonical $K$-rough path} will be our standard reference in approximation (or continuity) results. The situation can here be compared with Lyons' rough paths theory, where (classical) rough paths are often obtained as the limit of the canonical rough path given by the set of iterated integrals. 
\end{remark}

\begin{remark}
Starting from a $K$-L{\'e}vy area $\bchi^{\mathbf{2}}$, any constant $c$ gives rise to another $K$-L{\'e}vy area by setting $\widehat{\bchi}^{\mathbf{2}}_{s,x}(t,y):=\bchi^{\mathbf{2}}_{s,x}(t,y)-c$, which  paves the way toward renormalization tricks. In the sequel, we will use the notation 
\begin{align}\label{def:renormalized noise}\text{Renorm}((\chi,\bchi^{\mathbf{2}}),c):=(\chi,\bchi^{\mathbf{2}}-c)\end{align}
for such elementary renormalization.
\end{remark}

\subsection{A general solution map}
With the above setting and notation in hand, the following \enquote{black box} statement about equation \eqref{eq:she-xi} can now be derived from a slight adaptation of the considerations and results of \cite{HL}:

\begin{theorem}\label{th:solution map}[Solution map]\label{theo:solution-map}
Fix an arbitrary time horizon $T>0$ and a parameter $\al \in (-\frac43,-1)$. Then there exist a localized heat kernel $K$, two weights $w_1,w_2$ on $\R^d$ (that depend on $T$),  and a \enquote{solution} map
\begin{equation}\label{solut-map}
\Phi=\Phi^{K,T}_{\al,w_1,w_2} : \mathcal{E}^{K}_{\al;w_1} \times L^\infty(\R^d) \longrightarrow\  L^\infty([0,T]; L^\infty_{w_2}(\R^d)),
\end{equation}
where $\mathcal{E}^K_{\al;w_1}$ is introduced in Definition \ref{defi:k-rp} and $L^\infty_w$ is given by \eqref{def: space-L^infty_w}. The map $\Phi$ is such that the following properties are satisfied:

\smallskip

\noindent
\emph{(i) Weights.} One has $w_1(x)=(1+|x|)^{\ka_1}$ and $w_2(x)=e^{\ka_2 (1+|x|)}$, for some $\ka_1,\ka_2 >0$. 

\smallskip 

\noindent
\emph{(ii) Consistency.} Assume $\chi\in L^\infty_{w_1^{1/2}}(\R^{d+1})$ and $\uchi\in \mathcal{E}^{K}_{\al;w_1}$ is the canonical $K$-rough path above $\chi$ with L{\'e}vy-area term defined along \eqref{levy-area-smoo}. Then for any $\psi\in L^\infty(\R^d)$ one has $\Phi(\uchi,\psi)=u$, where $u$ is the classical solution on $[0,T]$ of equation \eqref{eq:she-xi}.

\smallskip

\noindent
\emph{(iii) Renormalization.} As in item (ii), consider $\chi\in L^\infty_{w_1^{1/2}}(\R^{d+1})$ and its canonical $K$-rough path $\uchi$. For an initial condition $\psi\in L^\infty(\R^d)$ and $c\in \R$, set $\widehat{u}=\Phi(\text{Renorm}(\uchi,c),\psi)$, where $\text{Renorm}(\uchi,c)$ is defined by \eqref{def:renormalized noise}. Then $\widehat{u}$ is the classical solution on $[0,T]$ of the equation 
\begin{equation*}
\left\{
\begin{array}{l}
\partial_{t} \widehat{u}  = \frac12 \Delta \widehat{u} + \widehat{u}\, \chi-c\, \widehat{u} \, , \quad t\in [0,T],\,  x\in \R^{d} \, ,\\
u_0(x)=\psi(x) \ .
\end{array}
\right.
\end{equation*}

\smallskip

\noindent
\emph{(iv) Continuity.} Let $(\uchi,\psi)\in \mathcal{E}^{K}_{\al,w_1} \times L^\infty(\R^d)$ and let $(\uchi^n,\psi^n)\in \mathcal{E}^{K}_{\al;w_1} \times L^\infty(\R^d)$ be a sequence such that
$$d_{\al,w_1}(\uchi^n,\uchi) \to 0 \quad \text{and} \quad \lVert \psi^n-\psi\rVert_{L^\infty(\R)} \to 0  \ ,$$
where $d_{\al,w_1}$ is the distance introduced in \eqref{def: global norm-rp}.
Then $\Phi(\uchi^n,\psi^n)$ converges to $\Phi(\uchi,\psi)$ in the space $L^\infty([0,T];L^\infty_{w_2}(\R^d))$.
\end{theorem}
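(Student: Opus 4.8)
The plan is to obtain Theorem \ref{theo:solution-map} by transferring the weighted fixed-point machinery of \cite{HL} into the $K$-rough paths language of \cite{De16}. The starting point is the mild formulation of \eqref{eq:she-xi}: a solution should satisfy $u = P\Psi + K\ast(u\,\chi)$ (up to the smooth remainder $R$ coming from Definition \ref{defi:loc-heat-ker}(i)), where $P\Psi$ denotes the heat semigroup applied to the initial datum. Since $\al\in(-\tfrac43,-1)$, the product $u\,\chi$ is \emph{not} classically defined, and one must lift $u$ into a modelled-distribution-type object built out of the abstract symbols $\1$, $\chi$, $K\ast\chi$ and — crucially, because $2\al+2\in(-\tfrac23,0)$ forces a second-order correction — the symbol carrying the $K$-L\'evy area $\bchi^{\mathbf 2}$. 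First I would write down the relevant (finite) family of abstract monomials, the induced \enquote{model} determined by the pair $\uchi=(\chi,\bchi^{\mathbf 2})$ via the $K$-Chen relation of Definition \ref{defi:k-rp}(i), and define the reconstruction/multiplication/integration maps directly in terms of $\chi$, $K\ast\chi$ and $\bchi^{\mathbf 2}$, exactly as in \cite[Sections 2--3]{De16} but keeping the weight $w$ everywhere. The condition $2H_0+H>d+\tfrac23$ is precisely what guarantees $\al>-\tfrac43$, i.e. that no third-order symbol is needed, so the expansion closes after one renormalization constant.

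Next I would set up the contraction. One works in a weighted space of modelled distributions whose components are controlled in the $\cac^\la_{w}$ / $\pmb{\cac}^\al_{w}$ norms of Definitions \ref{def:besov-space-positive}--\ref{def:besov-space-2} over a time interval $[0,\tau]$. The key analytic inputs are (a) a Schauder-type estimate for convolution with the localized heat kernel $K$, which raises regularity by $2$ and, thanks to the dyadic decomposition \eqref{decompo-k}, behaves well under weighted norms — this is where the heat-kernel bounds of \cite{HL} are invoked; (b) a weighted multiplication estimate showing that if $f\in\cac^\la_{w_1}$ and $\chi\in\cac^\al_{w_1}$ with $\la+\al>0$, then the reconstructed product lives in $\cac^{\al}_{w_1^2}$ — this forces the two distinct weights $w_1$ (for the noise/rough path) and $w_2\gtrsim w_1^{N}$ (for the solution), and explains the polynomial-vs-exponential shapes in item (i): each Picard iteration multiplies the weight, but the gain of a positive power of $|x|$ at each step is absorbed into the exponential $w_2$, which is stable under the semigroup; (c) the smoothing effect of the heat semigroup on $L^\infty(\R^d)$ initial data to control $P\Psi$. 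Combining (a)--(c), for $\tau$ small enough the map $u\mapsto P\Psi+K\ast(u\,\chi)$ is a contraction on a ball of the weighted space, giving a local-in-time solution; a standard patching argument (the weight $w_2$ and the bounds are uniform under time-translation of the already-constructed solution) extends it to all of $[0,T]$, yielding the map $\Phi$ of \eqref{solut-map}.

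The three remaining items are then soft consequences. For (ii) Consistency: when $\chi\in L^\infty_{w_1^{1/2}}$, the abstract product and reconstruction reduce to the genuine pointwise product because $\bchi^{\mathbf 2}$ is the canonical area \eqref{levy-area-smoo}; hence the fixed point of the abstract equation maps, under reconstruction, to the honest mild solution, which for smooth $\chi$ is the classical Lebesgue solution of \eqref{eq:she-xi}. For (iii) Renormalization: replacing $\bchi^{\mathbf 2}$ by $\bchi^{\mathbf 2}-c$ changes exactly the term in the abstract expansion that reconstructs to the \enquote{diagonal} contribution of $u\,\chi$, and tracking this through the fixed-point equation produces precisely the extra linear term $-c\,\widehat u$ — this is the Wick-type renormalization identity of \cite{hai-14,HL} specialised to our finite structure. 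For (iv) Continuity: the solution map is built from the contraction, whose constants depend on $\uchi$ only through $\|\uchi\|_{\al;\tau,w_1}$, and all the estimates in (a)--(c) are \emph{bilinear/Lipschitz} in the rough-path data; a routine comparison of the two fixed-point problems, together with the uniform patching, yields continuity of $\Phi$ in the distance $d_{\al,w_1}$ of \eqref{def: global norm-rp} and in $\|\cdot\|_{L^\infty}$, with values in $L^\infty([0,T];L^\infty_{w_2}(\R^d))$.

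The main obstacle I anticipate is not the fixed point per se but the \emph{bookkeeping of weights}: one must check that every Schauder and multiplication estimate can be performed with the \enquote{input} weight on one side and a strictly larger \enquote{output} weight on the other, in such a way that finitely many iterations still land inside a single space $L^\infty_{w_2}$ with $w_2$ of the stated exponential form, and that this growth is compatible with the non-compactly-supported nonlinearity $u\,\chi$ (the very point emphasised in the introduction as the novelty over \cite{De16,hai-14}). Making the constants $\ka_1,\ka_2$ and the local time $\tau$ genuinely uniform so that the patching argument closes — rather than producing a blow-up of the weight at each step — is the delicate part, and is exactly where the weighted formalism of Hairer--Labb\'e \cite{HL} has to be imported with care rather than quoted verbatim.
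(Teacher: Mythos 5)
Your proposal follows essentially the same route as the paper: the paper treats Theorem \ref{theo:solution-map} as a \enquote{black box} derived from a slight adaptation of Hairer--Labb\'e \cite{HL}, with the only points requiring comment being the translation from the $K$-rough path formalism to a (weighted) model/structure group as in \cite[Proposition 2.5]{De16} and the Schauder-type bound \eqref{bou-k-ast} on $K\ast\chi$ — exactly the two ingredients you single out (building the model from $\uchi$ \`a la \cite{De16} and invoking the weighted heat-kernel estimates of \cite{HL}). Your sketch merely fleshes out the internal Picard-iteration structure that the paper leaves implicit inside the citation of \cite{HL}, so it is consistent with the paper's argument rather than an alternative to it.
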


\begin{remark}
We are aware that the corresponding results in \cite{HL} are actually expressed in terms of (weighted) \textit{models} and \textit{structure group}, following the general terminology of \cite{hai-14}. However, the transition from our (lighter) notion of an $(\al,K)$-rough path to a \textit{regularity structure} (that is, a model together with a structure group) is a matter of elementary considerations, as detailed in \cite[Proposition 2.5]{De16}. The only technical point requiring some attention is the control of $K\ast \chi$, as an element of $\cac^{\al+2}_{w_1}(\R^{d+1})$, in terms of $\chi\in \cac^{\al}_{w_1}(\R^{d+1})$, for $\al\in (-\frac43,-1)$. In fact, following the lines of the proof of \cite[Lemma 2.2]{De16}, one can easily check that for every weight $w$ on $\R^d$, every $\al\in (-2,-1)$, every $\chi\in \cac_w^\al(\R^{d+1})$ and every time $T>0$, one has
\begin{equation}\label{bou-k-ast}
\|K\ast \chi\|_{\al+2;T,w} \lesssim \|\chi\|_{\al;T,w} \, ,
\end{equation}
which precisely corresponds to the control we need in order to justify this transition.
\end{remark}

\section{Main results}\label{sec:main-results}

We now go back to the stochastic setting and to the consideration of a fractional noise $\chi:=\dot{W}$ in equation \eqref{eq:she-xi}. In other words, we go back here to the analysis of \eqref{eq:she-intro}. With the result of Theorem~\ref{theo:solution-map} in mind, the strategy toward the desired Stratonovich solution is clear: we need to construct a $K$-rough path above $\dw$ in the almost sure sense, preferably as the limit of some (renormalized) canonical $K$-rough path (for the continuity property $(iv)$ in Theorem \ref{th:solution map} to hold).

\smallskip

First, we will proceed to the detailed presentation of our existence result in the situation where $\dot{W}$ is the space-time fractional noise defined by \eqref{eq:def-gamma} (for $(H_0,\bh)$ satisfying \eqref{cond-h-strato-intro}). Then we will review the main steps of the construction in the (easier) situation where $\dot{W}$ is only a \textit{spatial} fractional noise.

\subsection{Application to a space-time fractional noise}\label{subsec:appli-st-fn}

Let $\dot{W}$ be the noise defined by \eqref{eq:def-gamma}, for some Hurst index $H_0\in (0,1)$ in time and $\bh=(H_1,\ldots,H_d)\in (0,1)^{d}$ in space. Let us recall that $\dot{W}$ can also be seen as the derivative of a space-time fractional Brownian motion $W$, that is $\dot{W}=\partial_t \partial_{x_1} \cdots \partial_{x_d} W$. As a consequence, one can easily define a smooth approximation $\dw^n$ of $\dw$ by using a standard mollifying procedure. 

\smallskip

To be more specific, we define the approximated noise $\dw^n$ by $\dw^0:=0$ and for $n\geq 1$, 
\begin{align}\label{def:approximate noise}
\dw^n:=\partial_t  \partial_{x_1} \cdots \partial_{x_d} W^n, \quad \text{where} \ W^n:=\rho_n \ast W \ \text{and} \ \rho_n(s,x):=2^{n(d+2)}\rho(2^{2n} s,2^n x),
\end{align}
for some mollifier $\rho: \R^{d+1} \to \R_+$ satisfying the following (natural) assumptions:

\noindent
\textbf{Assumption ($\rho$).} We consider a smooth, even, and $L^1(\R^{d+1})$ function $\rho :\R^{d+1} \to \R_+$. In addition we suppose that $\rho$ satisfies 

\smallskip

\noindent
$(i)$  $\int_{\R^{d+1}} \rho(s,x) \, ds dx=1$. 

\smallskip

\noindent
$(ii)$ The Fourier transform $\mathcal F{\rho}$ is Lipschitz.

\smallskip

\noindent
$(iii)$ For every $(\tau_0,\tau_1,\ldots,\tau_d) \in [0,1]^{d+1}$, the following upper bound holds true for every $(\la, \xi)\in\R^{d+1}$,
\begin{equation}\label{asympt-rho}
|\mathcal F{\rho}(\la,\xi)| \leq c_\tau |\la|^{-\tau_0} \, \prod_{i=1}^{d}|\xi_{i}|^{-\tau_{i}}  .
\end{equation}

\smallskip

\begin{remark}\label{remark on weights}
Assumption $(\rho)$ is  trivially satisfied by any smooth, even and compactly-supported function $\rho :\R^{d+1}\to\R_+$ such that $\int_{\R^{d+1}} \rho(s,x) \, ds dx=1$. These conditions also cover the mollifying function considered in \cite[Section 3.2]{HHNT} or in \cite[Section 5]{HN}, that is $\rho(s,x):=\vp(s) p_1(x)$, where $\vp:=\1_{[0,1]}$ and $p_1$ refers to the Gaussian density \eqref{eq:heat-kernel} at time $1$. Last but not least, Assumption $(\rho)$ is satisfied by the mollifier considered in the Skorohod analysis of \cite[Section 3]{CDOT}, that is $\rho(s,x):=p_1(s) p_1(x)$. The latter choice will become our standard reference in the subsequent Definition \ref{defi:strato-sol}.
\end{remark}

Once endowed with the approximation $\dw^n$, let us consider the canonical $K$-rough path $\pmb{W}^{n}:=(\dw^{n},\mathbf{W}^{\mathbf{2},n})$, defined along Remark \ref{rk:levy-area-smooth}. Namely we set
\begin{equation}\label{notation-xi-2}
\bw^{\mathbf{2},n}_{s,x}(t,y):=\ci^n_{s,x}(t,y)\cdot \dw^{n}(t,y) \, , 
\end{equation}
where
\begin{equation}\label{notationci-n}
\ci^n_{s,x}(t,y):=(K\ast \dw^{n})(t,y)-(K\ast \dw^{n})(s,x) \, .
\end{equation}

With this setting in hand, our main statement will consist in a convergence property for the (suitably renormalized) sequence $\pmb{W}^{n}:=(\dw^{n},\mathbf{W}^{\mathbf{2},n})$.  The statement will appeal, among other things, to the following technical result (the proof of which is postponed to Section \ref{subsec:proof-renorm-cstt-tech}).
\begin{lemma}\label{lem:cstt-renorm}
Let $\rho$ be a mollifier satisfying Assumption $(\rho)$, and let $H_0\in (0,1), \bh=(H_1,\ldots,H_d)\in (0,1)^{d}$ be such that 
\begin{equation}\label{defi:h-scal}
2H_0+H \leq d+1 \, ,
\end{equation}
where the notation $H$ has been introduced in \eqref{defi:sum-h}. Recall that the heat kernel $p$ is defined by \eqref{eq:heat-kernel}. Let us set from now on
\begin{equation}\label{notation:cn-h}
\cn_{H_0,\bh}(\la,\xi):=\frac{1}{|\la|^{2H_0-1}} \prod_{i=1}^d \frac{1}{|\xi_i|^{2H_i-1}} \, ,
\end{equation}
namely $c_0c_\bh\,\cn_{H_0,\bh}$ is the Fourier transform of the mesure $\ga_0\otimes\ga$ introduced in \eqref{eq:rep-gamma-fourier}.
Then, for every fixed $c>0$, the integral
\begin{equation}\label{eq:cj-c-n-border}
\int_{|\la|+|\xi|^2 \geq c}  |\mathcal F{\rho}(\la,\xi)|^2  \mathcal F{p}(\la,\xi) \cn_{H_0,\bh}(\la,\xi)\, d\la d\xi 
\end{equation}
is finite, and when $2H_0+H<d+1$, it even holds that
\begin{equation}\label{eq:cj-c-n}
\cj_{\rho,H_0,\bh}:=\int_{\R^{d+1}}  |\mathcal F{\rho}(\la,\xi)|^2  \mathcal F{p}(\la,\xi) \cn_{H_0,\bh}(\la,\xi)\, d\la d\xi \, < \, \infty \, .
\end{equation}
\end{lemma}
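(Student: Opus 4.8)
The plan is to estimate the integrand in \eqref{eq:cj-c-n} modewise and split the integration region according to the singular loci of $\cn_{H_0,\bh}$, which are the hyperplanes $\{\la=0\}$ and $\{\xi_i=0\}$. First I would record the basic behaviour of the three factors: from \eqref{eq:heat-kernel} one has the explicit formula $\mathcal{F} p(\la,\xi)=c\,(1+|\xi|^2-\imath\la)^{-1}$ (or rather the heat-kernel transform that makes $|\mathcal{F} p(\la,\xi)|\lesssim (|\la|+|\xi|^2)^{-1}$ for $|\la|+|\xi|^2\geq 1$ and $|\mathcal{F} p(\la,\xi)|\lesssim 1$ otherwise); from \eqref{notation:cn-h}, $\cn_{H_0,\bh}(\la,\xi)=|\la|^{-(2H_0-1)}\prod_i |\xi_i|^{-(2H_i-1)}$; and from Assumption $(\rho)(iii)$, the factor $|\mathcal{F}\rho(\la,\xi)|^2$ can be used, with freely chosen exponents $\tau_0,\tau_1,\dots,\tau_d\in[0,1]$, to produce decay $|\la|^{-2\tau_0}\prod_i|\xi_i|^{-2\tau_i}$ — this is the tuning knob that will absorb the large-frequency divergence, while Assumption $(\rho)(i)$ (boundedness of $\mathcal{F}\rho$) handles the low-frequency part without any decay.

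The finiteness of \eqref{eq:cj-c-n-border} (the region bounded away from the origin) is the easier half: there $\mathcal{F} p$ contributes the decay $(|\la|+|\xi|^2)^{-1}$, and one only needs local integrability of $\cn_{H_0,\bh}$ near each coordinate hyperplane together with decay at infinity. Since $2H_i-1<1$ for $H_i\in(0,1)$, each factor $|\xi_i|^{-(2H_i-1)}$ and $|\la|^{-(2H_0-1)}$ is locally integrable in one variable, so the singularities on $\{\la=0\},\{\xi_i=0\}$ are harmless; for the behaviour at infinity one invokes $(\rho)(iii)$ with, say, all $\tau_i$ slightly positive to beat the polynomial growth $\prod|\xi_i|^{-(2H_i-1)}$ and combine with the $(|\la|+|\xi|^2)^{-1}$ gain. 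For the full integral \eqref{eq:cj-c-n}, under the strict inequality $2H_0+H<d+1$, I would do a more careful Tonelli-type splitting: near the origin (the region $|\la|+|\xi|^2\leq c$) the integral $\int |\la|^{-(2H_0-1)}\prod_i|\xi_i|^{-(2H_i-1)}\,d\la\,d\xi$ over a bounded neighbourhood converges because the total singularity order is $\sum_i(2H_i-1)+(2H_0-1)=2H_0+2H-(d+1)$ spread over $d+1$ variables, each exponent individually $<1$; and the dimension count $(d+1)-(2H_0+2H-(d+1))$... — more simply, one integrates variable by variable. The key point for the origin is just that each $2H_i-1<1$ and $2H_0-1<1$, which always holds.

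The main obstacle, and the place where the strict inequality $2H_0+H<d+1$ is genuinely used, is the region near the origin \emph{in the borderline regime}: when $2H_0+H=d+1$ the integral over $\{|\la|+|\xi|^2\leq c\}$ of $\mathcal{F} p\cdot\cn_{H_0,\bh}$ diverges logarithmically, which is exactly why \eqref{eq:cj-c-n-border} is stated only for the region away from the origin and \eqref{eq:cj-c-n} only under the strict inequality. So the heart of the proof is a scaling/homogeneity computation near $(0,0)$: using the parabolic scaling $\la\mapsto r^2\la,\ \xi\mapsto r\xi$, the measure $\cn_{H_0,\bh}(\la,\xi)\,d\la\,d\xi$ scales like $r^{2+d}\cdot r^{-2(2H_0-1)}\cdot r^{-(2H-d)} = r^{2(d+1)-(2H_0+2H)}\cdot r^{?}$ — one checks the net power of $r$ is $2(d+1-2H_0-H)$ which is $>0$ precisely under \eqref{eq:cj-c-n}'s hypothesis and $=0$ at the border; combined with $|\mathcal{F} p|\lesssim 1$ near the origin this gives a convergent $\int_0^c r^{2(d+1-2H_0-H)}\frac{dr}{r}$. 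I would carry this out by passing to "polar-type" coordinates adapted to the parabolic scaling, or more pedestrianly by iterated one-dimensional integration, integrating out $\xi_1,\dots,\xi_d$ and then $\la$ and tracking the resulting power of the remaining radial variable. Away from the origin no such delicacy arises and $(\rho)(iii)$ closes the argument, so the whole proof reduces to this one homogeneity bookkeeping plus routine local-integrability checks.
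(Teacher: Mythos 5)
Your high-level strategy matches the paper's: decompose into a parabolic neighbourhood of the origin (handled by a scaling/homogeneity count, with $\mathcal{F}\rho$ bounded) and the exterior (handled by Assumption $(\rho)(iii)$ together with the decay of $\mathcal{F}p$). This is exactly the decomposition $\cj_0+\cj_\infty$ in the paper's appendix proof, and the "polar-type coordinates adapted to the parabolic scaling" you describe are, up to the substitution $\tilde\xi_i=\xi_i^2$, the spherical coordinates the paper uses.

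There is, however, a genuine error in the central homogeneity computation. You write $\mathcal{F}p(\la,\xi)=c\,(1+|\xi|^2-\imath\la)^{-1}$ and conclude $|\mathcal{F}p|\lesssim 1$ near the origin. This is false: the correct formula, obtained from $(\partial_t-\frac12\Delta)p=\delta_0$, is $\mathcal{F}p(\la,\xi)=\big(\tfrac{|\xi|^2}{2}+\imath\la\big)^{-1}$, so $|\mathcal{F}p(\la,\xi)|\asymp(|\la|+|\xi|^2)^{-1}$ blows up as $(\la,\xi)\to 0$, contributing a factor $r^{-2}$ to the parabolic scaling. At the same time your scaling of $\cn_{H_0,\bh}\,d\la\,d\xi$ is off by a compensating factor: $r^{2+d}\cdot r^{-2(2H_0-1)}\cdot r^{-(2H-d)}=r^{2(d+2-2H_0-H)}$, not $r^{2(d+1-2H_0-H)}$. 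The two slips cancel and you land on the correct net exponent $2(d+1-2H_0-H)$, but as written neither step is right, and a reader correcting only one of them would conclude the borderline case $2H_0+H=d+1$ converges (or that the subcritical case diverges). Finally, for the exterior region your suggestion to take "all $\tau_i$ slightly positive" in $(\rho)(iii)$ cannot work as a single fixed choice when some $H_i<\tfrac12$: to beat the polynomial growth $|\xi_i|^{1-2H_i}$ at infinity you need $\tau_i$ close to $1$, but that ruins local integrability near $\{\xi_i=0\}$; you must let $\tau_i$ depend on the region (this is precisely what the paper's decomposition into the $\Lambda_i$ achieves, and what the lemma's allowance of arbitrary $\tau\in[0,1]^{d+1}$ is for).
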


For simplicity, let us set from now on $c_{H_0,\bh}:=c_{H_0}c_{\bh}$, where $c_{H_0},c_{\bh}$ are the constants defined in~\eqref{cstt-c-h}. We are now ready to state the result about the existence of a $K$-rough path above our noise.
\begin{theorem}\label{theo:conv-k-rp}
Let $\rho$ be a mollifier satisfying Assumption $(\rho)$.  Consider Hurst parameters  $H_0\in (0,1)$ and $\bh=(H_1,\ldots,H_d)\in (0,1)^{d}$. 
We strengthen condition \eqref{defi:h-scal} in the following way: 
\begin{align}\label{strengthened 4.17}
d+\frac12< 2H_0+H \leq d+1,
\end{align}
where we recall that $H$ is given by \eqref{defi:sum-h}. In this setting, fix $\al \in \R$ such that 
\begin{align}\label{in th: condition alpha}
 \al < -(d+2)+2H_0+H.\end{align}
For $n\geq1$, define $\dw^n$ as in \eqref{def:approximate noise} and set 
\begin{align}\label{hat W^n}
\widehat{\pmb{W}}^{n}:=\mathrm{Renorm}(\uw^{n},\mathfrak{c}_{\rho,H_0,\bh}^{(n)}),  
\end{align}
with
\begin{equation}\label{renormal}
\mathfrak{c}_{\rho,H_0,\bh}^{(n)}:=
\begin{cases}
c_{H_0,\bh}^2 \, 2^{2n(d+1-(2H_0+H))} \cj_{\rho,H_0,\bh}   & \text{if} \ 2H_0+H<d+1\\
&\\
c_{H_0,\bh}^2 \int_{|\la|+|\xi|^2\geq 2^{-2n}}  |\mathcal F{\rho}(\la,\xi)|^2  \mathcal F{p}(\la,\xi) \cn_{H_0,\bh}(\la,\xi)\, d\la d\xi& \text{if} \ 2H_0+H=d+1
\end{cases}
\end{equation}
where the operator Renorm is introduced in \eqref{def:renormalized noise} and the quantity $\cj_{\rho,H_0,\bh}$ is defined in \eqref{eq:cj-c-n}. 

\smallskip

Then for any weight $w(x):=(1+|x|)^\ka$ with $\ka>0$ and for the distance $d_{\alpha,w}$ given by \eqref{def: global norm-rp}, there exists an $(\al,K)$-rough path $\widehat{\uw}$ such that almost surely
\begin{equation}\label{desired-convergence}
\lim_{n\to\infty}d_{\al,w}(\widehat{\uw}^{n},\widehat{\uw}) = 0.
\end{equation}
\end{theorem}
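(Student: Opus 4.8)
The strategy is the classical two-step scheme for convergence of rough paths / regularity structures models in a Gaussian setting: (a) show that the renormalized sequence $\widehat{\pmb{W}}^n = (\dw^n, \bw^{\mathbf{2},n} - \mathfrak{c}^{(n)}_{\rho,H_0,\bh})$ is Cauchy in an appropriate $L^p(\Omega)$-version of the $(\al,K)$-rough-path topology, so that a limit $\widehat{\uw}$ exists; (b) upgrade $L^p$-convergence to almost-sure convergence along the full sequence via a Borel–Cantelli / dyadic-summability argument, using that the mollification scale is $2^{-n}$. Since the underlying objects live in a Gaussian chaos of fixed order ($\dw^n$ in the first chaos, $\bw^{\mathbf{2},n}$ in a sum of the zeroth and second chaos after it has been centered), hypercontractivity will let me reduce all $L^p$-bounds to $L^2$-bounds, which is where the actual work lies.

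The first component is the easier one: $\dw^n \to \dw$ in $\cac^\al_w$ almost surely for the stated range of $\al$ is a standard Kolmogorov-type estimate, and I would establish it by computing, for a test function $\cs^\delta_{s,x}\vp$, the second moment $\E|\langle \dw^n - \dw^m, \cs^\delta_{s,x}\vp\rangle|^2$ in Fourier variables against the spectral measure $c_0 c_\bh\, \cn_{H_0,\bh}(\la,\xi)\,d\la d\xi$, exploiting Assumption $(\rho)$(iii) to get the decay in $\la,\xi$ needed for integrability, then applying a weighted Kolmogorov continuity criterion (this is essentially the content behind \cite{HL}, and the spatial weight is handled exactly as there). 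The bulk of the proof is the Lévy-area term $\bw^{\mathbf{2},n}$. Here I would proceed as follows. First, verify the $K$-Chen relation for each $n$ — this is algebraic and immediate from the definitions \eqref{notation-xi-2}–\eqref{notationci-n}, and it passes to the limit. Second, and crucially, expand $\E\big[\langle \widehat{\bw}^{\mathbf{2},n}_{s,x} - \widehat{\bw}^{\mathbf{2},m}_{s,x},\, \cs^\delta_{s,x}\vp\rangle \,\overline{(\cdots)}\big]$ using the Wiener–Itô chaos decomposition: the product $\ci^n_{s,x}(t,y)\cdot \dw^n(t,y)$ has a mean (zeroth-chaos) part, which is precisely what the renormalization constant $\mathfrak{c}^{(n)}_{\rho,H_0,\bh}$ from \eqref{renormal} is designed to subtract (this is exactly why Lemma \ref{lem:cstt-renorm} appears — it guarantees finiteness of that mean, or controls its divergence rate $2^{2n(d+1-(2H_0+H))}$ in the borderline-adjacent regime), and a second-chaos part, which is a genuine double Wiener integral. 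For the second-chaos part one uses the isometry to write the covariance as an explicit $2(d+1)$-dimensional Fourier integral involving two copies of $\widehat K$, two copies of $\widehat\rho$ (at scales $2^{-n}$ and $2^{-m}$), the heat-kernel factor $\cf p$, and two copies of the spectral density $\cn_{H_0,\bh}$; the singular behavior of $\widehat K$ near the origin encoded in Definition \ref{defi:loc-heat-ker}(iii) gives the right small-scale power counting, and condition \eqref{in th: condition alpha} together with $2H_0+H > d+\tfrac12$ is exactly what makes the resulting integral bounded by $\delta^{2(2\al+2)} w(y)^{2}$ times a constant tending to $0$ as $n,m\to\infty$. Third, I would repeat the analogous estimate for the $w$-weighted Hölder-type difference $\langle \widehat{\bw}^{\mathbf{2},n}_{s,x} - \widehat{\bw}^{\mathbf{2},n}_{s',x'}, \cdots\rangle$ needed to land in $\pmb{\cac}^{2\al+2}_w$, which follows the same pattern with an extra increment producing the required Hölder power. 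Finally, combining these with hypercontractivity and a Kolmogorov criterion on the space-time index $(s,x)$ gives the $L^p$-Cauchy property, hence $L^p$-convergence to some $\widehat{\uw} = (\dw, \bw^{\mathbf{2}})$; checking that $\bw^{\mathbf{2}}$ inherits the $K$-Chen relation and the $\pmb{\cac}^{2\al+2}_w$ regularity from the uniform bounds confirms $\widehat{\uw}\in\mathcal E^K_{\al;w}$.

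For the upgrade to almost-sure convergence I would take $p$ large enough (depending on the gap between $\al$ and $-(d+2)+2H_0+H$, and on $\ka$) so that $\sum_n \E\, d_{\al,w}(\widehat{\uw}^n,\widehat{\uw})^p < \infty$; since the mollification parameter advances by integer steps $n$, the Cauchy estimates above already decay geometrically in $\min(n,m)$, so this series converges and Borel–Cantelli yields \eqref{desired-convergence}. Two points deserve care and are where I expect the main obstacle. The first is the precise power-counting / integrability bookkeeping in the $2(d+1)$-dimensional Fourier integral for the second-chaos covariance: one must simultaneously track the small-$(\la,\xi)$ behavior (controlled by the kernel decomposition and where the threshold $d+\tfrac12$ enters), the large-$(\la,\xi)$ behavior (controlled by Assumption $(\rho)$(iii), choosing the exponents $\tau_i$ appropriately), and the diagonal/near-diagonal behavior producing the factor $\delta^{2(2\al+2)}$ — and to get convergence (not just uniform boundedness) one needs a quantitative modulus in $|2^{-n}-2^{-m}|$ from the difference of the two mollifier factors, which is where Assumption $(\rho)$(ii) (Lipschitz $\cf\rho$) is used. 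The second delicate point is the borderline case $2H_0+H = d+1$: there the "constant" $\mathfrak{c}^{(n)}_{\rho,H_0,\bh}$ is itself $n$-dependent and logarithmically divergent, so subtracting it must be done at exactly the scale $|\la|+|\xi|^2 \geq 2^{-2n}$ matching the mollifier cutoff, and one must check that the leftover (the contribution from $|\la|+|\xi|^2 < 2^{-2n}$, which Lemma \ref{lem:cstt-renorm} says is integrable against the cut-off region) genuinely vanishes in the limit rather than contributing a finite shift; this is the same mechanism as in white-noise PAM renormalization and I would handle it by a careful splitting of the Fourier domain into $\{|\la|+|\xi|^2 \gtrless 2^{-2\min(n,m)}\}$.
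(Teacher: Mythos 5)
Your plan follows essentially the same route as the paper's proof: Fourier-side second-moment estimates for both components, a chaos/Wick decomposition of the variance of $\widehat{\bw}^{\mathbf{2},n}$, Gaussian (hyper)contractivity to upgrade to higher moments, and a Borel--Cantelli conclusion. The one superficial difference is that you invoke a weighted Kolmogorov continuity criterion where the paper uses a multiparametric, distributional GRR lemma (Lemma~\ref{GRR-gene}); these are functionally the same device here, and your version would work.

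There is, however, a genuine gap in the way you handle the renormalization. You write that the zeroth-chaos part of $\ci^n_{s,x}(t,y)\,\dw^n(t,y)$ ``is precisely what the renormalization constant $\mathfrak{c}^{(n)}_{\rho,H_0,\bh}$ is designed to subtract,'' and from there you move directly to bounding the second-chaos covariance. But the mean $\E\big[\bw^{\mathbf{2},n}_{s,x}(t,y)\big]$ is \emph{not} a constant: it equals
$\E\big[(K\ast\dw^n)(t,y)\dw^n(t,y)\big]-\E\big[(K\ast\dw^n)(s,x)\dw^n(t,y)\big]$,
and only the first summand (evaluated on the diagonal and with the heat kernel $p$ in place of the localized kernel $K$) is close to $\mathfrak{c}^{(n)}_{\rho,H_0,\bh}$. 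The operation $\mathrm{Renorm}$ in \eqref{def:renormalized noise} can only subtract a constant, so after renormalization $\widehat{\bw}^{\mathbf{2},n}$ still has a nontrivial $(s,x;t,y)$-dependent mean $\ce^n_{s,x}(t,y)=\E\big[\bw^{\mathbf{2},n}_{s,x}(t,y)\big]-\mathfrak{c}^{(n)}_{\rho,H_0,\bh}$. When you expand $\E\big[|\langle\widehat{\bw}^{\mathbf{2},n}_{s,x},\psi^\ell_{s,x}\rangle|^2\big]$ by Wick, this residual mean produces the term $\big(\langle\ce^n_{s,x},\psi^\ell_{s,x}\rangle\big)^2$ on top of the two second-chaos contractions, and this term needs its own estimate at the correct scale $2^{4\ell(1+d-(2H_0+H)+\varepsilon)}$. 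Establishing that estimate is nontrivial: it requires (i) showing that the cross term $Q^n(s,x;t,y)=\E[(K\ast\dw^n)(s,x)\dw^n(t,y)]$ tested against $\psi^\ell_{s,x}$ scales correctly, which uses the Fourier bound on $|\cf K|$ of Lemma~\ref{lem:estim-fourier-k}; and (ii) showing that $Q^n(t,y;t,y)-\mathfrak{c}^{(n)}_{\rho,H_0,\bh}$ is $O(1)$ uniformly in $n$, which reduces to integrability of $\cn_{H_0,\bh}\,|\cf R|$ (the localized-heat-kernel remainder) and is where Lemma~\ref{lem:estim-fouri-r} is used, as well as where the regime split $2H_0+H<d+1$ versus $=d+1$ forces the two different forms of $\mathfrak{c}^{(n)}_{\rho,H_0,\bh}$ in \eqref{renormal}. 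Your plan would survive once this residual-mean analysis (the content of Lemma~\ref{lem:decompo-esp-xi-2}) is added, but as written it tacitly assumes a Wick renormalization (subtraction of the full, non-constant mean) that the $K$-rough-path framework does not permit.
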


For the sake of clarity, we have postponed the (long technical) proof of Theorem \ref{theo:conv-k-rp} to Section \ref{sec:stochastic-constructions}.

\smallskip

Now, by combining the deterministic result of Theorem \ref{theo:solution-map} with the stochastic construction of Theorem \ref{theo:conv-k-rp}, we derive the desired Stratonovich interpretation of equation \eqref{eq:she-intro}:

\begin{definition}\label{defi:strato-sol}
Let $\rho$ be the weight given by $\rho(s,x):=p_1(s)p_1(x)$ as considered in Remark~\ref{remark on weights}.  Let $(H_0, \bh)\in (0,1)^{d+1}$ be a vector of Hurst parameters such that 
\begin{equation}\label{cond-h-strato}
d+\frac23< 2H_0+H \leq d+1 .
\end{equation}
Besides, fix $\al \in \R$ such that 
$$-\frac43 < \al < -(d+2)+2H_0+H \, ,$$
as well as an arbitrary time horizon $T>0$ and an initial condition $\psi\in L^\infty(\R^d)$. Then, using the notations of Theorem \ref{theo:solution-map} and Theorem \ref{theo:conv-k-rp}, we call $u:=\Phi^{K,T}_{\al,w_1,w_2}(\widehat{\uw},\psi)$ the \emph{renormalized Stratonovich solution} of equation \eqref{eq:she-intro}, with initial condition $\psi$. In particular, $u$ is the (almost sure) limit, in $L^\infty([0,T]\times \R^d)$, of the sequence $(u^{n})_{n\geq 1}$ of classical solutions of the equation 
\begin{equation}\label{eq:u-square-n}
\left\{
\begin{array}{l}
\partial_{t} u^{n}  = \frac12 \Delta u^{n} + u^{n}\, \dw^n-\mathfrak{c}_{\rho,H_0,\bh}^{(n)}\, u^{n} \, , \quad t\in [0,T],\,  x\in \R^{d} \, ,\\
u^{n}_0(x)=\psi(x) \ .
\end{array}
\right.
\end{equation}
\end{definition}

\

Let us complete the above Definition \ref{defi:strato-sol} with two comments.

\begin{remark}
Observe that the assumptions on $H_0,\bh$ in \eqref{cond-h-strato} are more restrictive than those in Theorem \ref{theo:conv-k-rp}. This stronger restriction actually stems from Theorem \ref{theo:solution-map}, which requires $\al$ to be strictly larger than $-\frac43$.
\end{remark}

\begin{remark}\label{rk:extension-third-order}
As the reader might expect it, the extension of the result of Theorem \ref{theo:solution-map} to any $\al >-\frac32$ (and not only $\al>-\frac43$) is in fact possible, at the price of an additional \enquote{third-order} elements (on top of $\chi$ and $\bchi^{\mathbf{2}}$) in the definition of a $K$-rough path (see \cite[Definition 2.7]{De17} for details when $d=1$). Therefore, applying this extension to our stochastic model would require us to construct additional \enquote{third-order} processes above the fractional noise.
This strategy has been implemented in~\cite{De17} for $d=1$, and when working with the \enquote{compact-in-space} topologies derived from the analysis of \eqref{eq:she-non-lin}. We firmly believe that the constructions of \cite{De17} could be extended to the current setting, that is to any dimension $d\geq1$ and to the whole space $\R^d$, at the price of highly sophisticated computations.
\end{remark}

\smallskip

Let us finally conclude the section with the exhibition of an asymptotic equivalence for the constant $\mathfrak{c}_{\rho,H_0,\bh}^{(n)}$ in \eqref{hat W^n}, \textit{in the limit case $2H_0+H=d+1$} (the proof of this statement can be found in Section \ref{subsec:proof-asymp}).

\begin{proposition}\label{prop:asymp-renorm-cstt}
In the setting of Theorem \ref{theo:conv-k-rp}, assume that $2H_0+H=d+1$. Then, as $n$ tends to infinity, it holds that
\begin{equation}\label{decompo-cstt-border}
\mathfrak{c}_{\rho,H_0,\bh}^{(n)}=n\cdot C_{H_0,\bh}+O(1),
\end{equation}
for some constant $C_{H_0,\bh}$ independent of $\rho$.
\end{proposition}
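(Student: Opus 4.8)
The plan is to analyze the integral
$$
\mathfrak{c}_{\rho,H_0,\bh}^{(n)} = c_{H_0,\bh}^2 \int_{|\la|+|\xi|^2\geq 2^{-2n}}  |\mathcal F{\rho}(\la,\xi)|^2  \mathcal F{p}(\la,\xi) \cn_{H_0,\bh}(\la,\xi)\, d\la d\xi
$$
under the constraint $2H_0+H=d+1$, by isolating the logarithmically divergent part. First I would recall that $\mathcal F p(\la,\xi)$ decays like $(|\la|^2+|\xi|^4)^{-1/2}$ (up to smooth corrections) for large frequencies, while $|\mathcal F \rho(\la,\xi)|^2 \to |\mathcal F\rho(0,0)|^2 = 1$ as $(\la,\xi)\to 0$ since $\int \rho = 1$. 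Consequently the behaviour of the integral near the \emph{origin} is harmless (the region $|\la|+|\xi|^2 \leq 1$ contributes a quantity bounded uniformly in $n$, being cut off at radius $2^{-2n}$ but with an integrand that is actually integrable there — indeed $\cn_{H_0,\bh}$ has at worst $|\la|^{1-2H_0}\prod|\xi_i|^{1-2H_j}$ type singularities which are locally integrable since each $H_j<1$), and the divergence, if any, must come from the \emph{large-frequency} region. I would therefore write $\mathfrak{c}_{\rho,H_0,\bh}^{(n)} = c_{H_0,\bh}^2(A + B_n)$ where $A$ is the integral over $\{1/2 \leq |\la|+|\xi|^2, \text{ or near origin}\}$ — no, more precisely split the domain at frequency scale $1$: the bounded-frequency part is $O(1)$, and the real work is the tail.

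The second and main step is to extract the $n\cdot C_{H_0,\bh}$ term from the large-frequency tail. In that region I would replace $|\mathcal F\rho(\la,\xi)|^2$ by its effective size and $\mathcal F p(\la,\xi)$ by $(|\la|^2+|\xi|^4)^{-1/2}$; the point is that the full integrand over $\{|\la|+|\xi|^2 \geq 1\}$ behaves, under the scaling $\la \to 2^{2k}\la$, $\xi \to 2^k \xi$ (the parabolic scaling), like a sum over dyadic annuli of a quantity that is \emph{constant} in $k$ precisely because $2H_0+H=d+1$ makes the whole expression parabolically scale-invariant of degree $0$. Concretely: the measure $d\la d\xi$ scales by $2^{k(d+2)}$, $\mathcal Fp$ scales by $2^{-2k}$, and $\cn_{H_0,\bh}$ scales by $2^{-k(2(2H_0-1)+2(H-d)+2d)} = 2^{-k((2H_0+H) + (2H_0 - 2) \cdots)}$ — I would verify that the homogeneity of $\cn_{H_0,\bh}$ is $-(2(H_0 + \cdots))$; the exact bookkeeping gives total scaling exponent $d+2 - 2 - (2(2H_0+H) - (d+2)) = 2(d+2) - 2 - 2(2H_0+H) = 2(d+2-(2H_0+H)) - 2 = 2\cdot 1 - 2 = 0$ when $2H_0+H = d+1$. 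Thus summing over the roughly $2n$ dyadic scales between frequency $2^{-2n}$ and frequency $O(1)$ produces a term $\sim n$ times the integral of the limiting (scale-invariant) integrand over a single fundamental annulus, and the correction from replacing $|\mathcal F\rho|^2$ by $1$ is summable (hence $O(1)$) because $\mathcal F\rho$ is Lipschitz and $\mathcal F\rho(0,0)=1$, so $|\,|\mathcal F\rho(\la,\xi)|^2 - 1\,| \lesssim |\la|+|\xi|$ near $0$, which integrates against the scale-$0$ density to give a convergent geometric-type sum. The constant is then
$$
C_{H_0,\bh} = c_{H_0,\bh}^2 \int_{\text{fundamental annulus}} (|\la|^2+|\xi|^4)^{-1/2}\, \cn_{H_0,\bh}(\la,\xi)\, d\la d\xi \,,
$$
manifestly independent of $\rho$ since $\rho$ has been scaled out.

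The remaining step is bookkeeping: to show the whole discrepancy $\mathfrak{c}_{\rho,H_0,\bh}^{(n)} - n\,C_{H_0,\bh}$ is $O(1)$, I would control (a) the difference between the true integrand and the scale-invariant model integrand in each annulus — a sum of the form $\sum_{k=1}^{2n} \varepsilon_k$ with $\sum_k |\varepsilon_k| < \infty$, using the Lipschitz bound on $\mathcal F\rho$ for the low-frequency annuli and the assumption \eqref{asympt-rho} together with the exact decay of $\mathcal F p$ for the high-frequency annuli; (b) the boundary annuli at the two ends (frequency $\sim 2^{-2n}$ and frequency $\sim 1$), each contributing $O(1)$ by direct estimation using local integrability of $\cn_{H_0,\bh}$ at the origin (all $H_j,H_0<1$) and the $\mathcal Fp$-decay at infinity; (c) the genuinely-bounded-frequency piece $\{|\la|+|\xi|^2 \leq 1\} \cap \{\geq 2^{-2n}\}$, which is $O(1)$ by (b)-type arguments. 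I expect \textbf{the main obstacle} to be getting the parabolic-scaling homogeneity computation exactly right — in particular tracking how $\mathcal F p$ (which is not exactly $(|\la|^2+|\xi|^4)^{-1/2}$ but only asymptotically so, modulo the smooth remainder $R$ from Definition \ref{defi:loc-heat-ker}, and which one must handle via the explicit formula $\mathcal F p(\la,\xi) = (1/2)(|\xi|^2/2 + i\la)^{-1}$ or similar) interacts with the cutoff — and then verifying that the error terms are genuinely summable rather than merely bounded, since a logarithmic slippage there would corrupt the claimed $n\cdot C_{H_0,\bh} + O(1)$ and leave an unwanted $O(\log n)$ or worse. A secondary subtlety is making sure the origin cutoff at $2^{-2n}$ (rather than a cleaner $|\la| \geq 2^{-4n}, |\xi| \geq 2^{-2n}$) does not itself generate an extra $O(1)$ that depends on $\rho$; but since $\rho$ only enters through $|\mathcal F\rho|^2 = 1 + O(|\la|+|\xi|)$ near the origin and the $O(|\la|+|\xi|)$ part integrates to something $\rho$-dependent but \emph{bounded and convergent}, this is absorbed into the $O(1)$, consistent with the statement that only the \emph{leading} coefficient $C_{H_0,\bh}$ is asserted to be $\rho$-independent.
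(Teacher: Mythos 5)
Your overall strategy matches the paper's: split the domain at $|\la|+|\xi|^2=1$, exploit the degree-$0$ parabolic scale invariance of the integrand at the critical relation $2H_0+H=d+1$ to extract a logarithm, and use the Lipschitz bound $\big||\mathcal F\rho(\la,\xi)|^2-1\big|\lesssim |\la|+|\xi|$ (from $\mathcal F\rho$ Lipschitz and $\mathcal F\rho(0,0)=1$) to show that dropping $|\mathcal F\rho|^2$ costs only $O(1)$. Your scaling-exponent bookkeeping is also right. The paper realizes the same idea by an explicit chain of changes of variable culminating in a one-dimensional radial integral $C_{H_0,\bh}\int_{2^{-n}}^1 d\rho/\rho = n\,C_{H_0,\bh}$, rather than a sum over dyadic annuli; that is a cosmetic difference.

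However the proposal contains a genuine and self-contradictory error about \emph{which} region produces the $n\cdot C_{H_0,\bh}$ term, and the error is backed by a flawed argument. In your first paragraph you assert that the region $|\la|+|\xi|^2\le 1$ is bounded uniformly in $n$ because $\cn_{H_0,\bh}$ is locally integrable near the origin (each $H_j<1$). This omits the factor $\mathcal F p(\la,\xi)=(\tfrac{|\xi|^2}{2}+\imath\la)^{-1}$, which is exact (not just asymptotic, and the remainder $R$ from Definition \ref{defi:loc-heat-ker} plays no role here since $\mathfrak{c}^{(n)}_{\rho,H_0,\bh}$ involves $\mathcal F p$, not $\mathcal F K$); this factor also blows up at the origin, and the product $\mathcal F p\cdot \cn_{H_0,\bh}$ has parabolic degree $-(d+2)$ exactly when $2H_0+H=d+1$, i.e.\ it is \emph{not} locally integrable (the radial profile is $d\rho/\rho$). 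Thus the near-origin shell $2^{-2n}\le |\la|+|\xi|^2\le 1$ is precisely what produces $n\cdot C_{H_0,\bh}$, while the large-frequency region $|\la|+|\xi|^2\ge 1$ is $O(1)$ uniformly in $n$ (this is the finiteness of \eqref{eq:cj-c-n-border} in Lemma \ref{lem:cstt-renorm}). Your proposal repeatedly asserts the opposite attribution (\enquote{the bounded-frequency part is $O(1)$, and the real work is the tail}; \enquote{extract the $n\cdot C_{H_0,\bh}$ term from the large-frequency tail}; and item (c) claims the region $\{|\la|+|\xi|^2\le 1\}\cap\{\ge 2^{-2n}\}$ is $O(1)$), even though your own dyadic-annuli computation, summing from frequency $2^{-2n}$ up to $O(1)$, is actually carried out in the near-origin region. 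As written, following the proposal literally would never recover the $n$; once the roles of the two regions are swapped, the argument becomes essentially the paper's.
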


Thus, when compared to the behavior of $\mathfrak{c}_{\rho,H_0,\bh}^{(n)}$ as $2H_0+H<d+1$ (see \eqref{renormal}), the expansion~\eqref{decompo-cstt-border} clearly emphasizes the specificity of the border case $2H_0+H=d+1$ in the analysis of the problem.

\subsection{Application to a spatial fractional noise}\label{subsec:spatial-noise}

\

\smallskip

We now would like to specialize the previous results to a {\it spatial} fractional noise. In other words, we consider here $\{W^{\bh}(x), \, x\in \R^d\}$ a spatial fractional Brownian motion of Hurst index $\bh\in (0,1)^d$ and set 
\begin{equation}\label{defi-spatial-noise}
\dw:=\partial_{x_1}\cdots \partial_{x_d} W^{\bh}.
\end{equation}
In many situations, it is known that, at least at a formal level, the transition from a space-time fractional noise to a spatial fractional noise essentially reduces to \enquote{taking $H_0=1$}. Our aim in the sequel to fully justify this phenomenon in the situation we are interested in, that is the study of equation \eqref{eq:she-intro}. To this end, we propose to review the successive steps of the analysis provided in Section \ref{subsec:appli-st-fn} and examine the corresponding results in the spatial situation.

\smallskip

Thus, as a first step, we introduce a smooth approximation $\dw^n$ of $\dw$ obtained through a general mollifying procedure. That is, we define the approximated noise $\dw^n$ by $\dw^0:=0$ and for $n\geq 1$, 
\begin{align}\label{def:approximate noise-spa}
\dw^n(s,x)=\dw^n(x):=\big(\partial_{x_1} \cdots \partial_{x_d} W^n\big)(x), \quad  W^n:=\rho_n \ast W^{\bh},\quad \rho_n(x):=2^{dn}\rho(2^n x),
\end{align}
for some mollifier $\rho: \R^{d} \to \R_+$ satisfying the following assumptions (remember that the notation $\mathcal{F}^{\textsc s}$ refers to the spatial Fourier transform, along \eqref{def:space Fourier transform}):

\smallskip

\noindent
\textbf{Assumption ($\rho$).} We consider a smooth, even, and $L^1(\R^{d})$ function $\rho :\R^{d} \to \R_+$. In addition we suppose that $\rho$ satisfies

\smallskip

\noindent
$(i)$  $\int_{\R^{d}} \rho(x) \, dx=1$.

\smallskip

\noindent
$(ii)$ The Fourier transform $\mathcal{F}^{\textsc s}{\rho}$ is Lipschitz.

\smallskip

\noindent
$(iii)$ For every $(\tau_1,\ldots,\tau_d) \in [0,1]^{d}$, the following upper bound holds true for every $\xi\in\R^{d}$,
\begin{equation}\label{asympt-rho-spa}
|\mathcal{F}^{\textsc s}{\rho}(\xi)| \leq c_\tau  \prod_{i=1}^{d}|\xi_{i}|^{-\tau_{i}}  .
\end{equation}

\

The canonical $K$-rough path $(\pmb{W}^{n})_{n\geq 1}:=(\dw^{n},\mathbf{W}^{\mathbf{2},n})_{n\geq 1}$ above $\dw^n$ can here be written as
\begin{equation}\label{notation-xi-2-spa}
\bw^{\mathbf{2},n}_{s,x}(t,y)=\bw^{\mathbf{2},n}_{x}(y):=\ci^n_{x}(y)\cdot \dw^{n}(y) \, , 
\end{equation}
where
\begin{equation}\label{notationci-n-spa}
\ci^n_{x}(y):=(\tilde{K}\ast \dw^n)(y)-(\tilde{K}\ast \dw^n)(x) \, ,
\end{equation}
with
\begin{equation}\label{kernel-k-tilde}
\tilde{K}(x):=\int_0^\infty ds \, K(s,x) \, .
\end{equation}
It is worth noting that, owing to the very definition of $K$ (see Definition \ref{defi:loc-heat-ker}), the latter integral is indeed finite (for every fixed $x\in \R^d$), and also that $\kti\in L^1(\R^d)$.

\smallskip

The spatial counterpart of the preliminary Lemma \ref{lem:cstt-renorm} now reads as follows (the proof of this property can be shown with similar estimates to the ones in Section \ref{subsec:proof-renorm-cstt-tech}).
\begin{lemma}\label{lem:cstt-renorm-spa}
Let $\rho:\R^d \to \R$ be a mollifier satisfying Assumption $(\rho)$, and let $\bh=(H_1,\ldots,H_d)\in (0,1)^{d}$ be such that 
\begin{equation}\label{defi:h-scal-spa}
H < d-1 \, ,
\end{equation}
where the notation $H$ has been introduced in \eqref{defi:sum-h}. Let us set from now on
\begin{equation}\label{notation:cn-h-spa}
\cn_{\bh}(\xi):= \prod_{i=1}^d \frac{1}{|\xi_i|^{2H_i-1}} \, ,
\end{equation}
namely $c_\bh\,\cn_{\bh}$ is the Fourier transform of the measure $\ga_{\bh}$ introduced in \eqref{eq:rep-gamma-fourier}. Besides, recall that the heat kernel $p$ is defined by \eqref{eq:heat-kernel}.
Then the following integral is finite:
\begin{equation}\label{eq:cj-c-n-spa}
\cj_{\rho,\bh}:=\int_{\R^{d}}  |\mathcal{F}^{\textsc s}{\rho}(\xi)|^2   \cn_{\bh}(\xi)\bigg( \int_0^\infty ds\, \mathcal{F}^{\textsc s} p_s(\xi)\bigg)\, d\xi .
\end{equation}
\end{lemma}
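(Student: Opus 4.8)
The plan is to reduce this spatial statement to the estimates already carried out for the space-time Lemma \ref{lem:cstt-renorm}, taking advantage of the structural analogy between the two settings. The key observation is that the inner integral $\int_0^\infty ds\, \mathcal{F}^{\textsc s} p_s(\xi)$ plays here exactly the role that $\mathcal F p(\la,\xi)$ played in \eqref{eq:cj-c-n}, while the absence of the time-frequency variable corresponds to formally "setting $H_0=1$" (so that $|\la|^{2H_0-1}=|\la|$ and the $\la$-integration disappears). Concretely, since $\mathcal{F}^{\textsc s} p_s(\xi)=e^{-s|\xi|^2/2}$, the inner integral can be computed exactly:
\begin{equation*}
\int_0^\infty ds\, \mathcal{F}^{\textsc s} p_s(\xi)=\int_0^\infty e^{-s|\xi|^2/2}\, ds=\frac{2}{|\xi|^2}\, .
\end{equation*}
Thus the whole problem collapses to showing that
\begin{equation*}
\cj_{\rho,\bh}=2\int_{\R^{d}}  \frac{|\mathcal{F}^{\textsc s}{\rho}(\xi)|^2}{|\xi|^2}\, \cn_{\bh}(\xi)\, d\xi<\infty\, ,
\end{equation*}
and I would split this integral into the region near the origin $\{|\xi|\leq 1\}$ and the region at infinity $\{|\xi|> 1\}$.

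First I would handle the behavior at infinity. On $\{|\xi|>1\}$, Assumption $(\rho)$(iii) gives, for any choice of exponents $(\tau_1,\ldots,\tau_d)\in[0,1]^d$, the bound $|\mathcal{F}^{\textsc s}\rho(\xi)|^2\lesssim \prod_{i=1}^d|\xi_i|^{-2\tau_i}$. Together with the extra decay $|\xi|^{-2}$ coming from the heat-kernel integral and with $\cn_{\bh}(\xi)=\prod_{i=1}^d|\xi_i|^{1-2H_i}$, the integrand is controlled by $\prod_{i=1}^d|\xi_i|^{1-2H_i-2\tau_i}$ times $|\xi|^{-2}$. Choosing the $\tau_i$ close enough to $1$, each exponent $1-2H_i-2\tau_i$ can be pushed below $-1$, and the additional $|\xi|^{-2}$ factor provides the decay needed for integrability in the radial direction; this is exactly the kind of bookkeeping done in Section \ref{subsec:proof-renorm-cstt-tech}, only simpler because there is one fewer variable. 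Near the origin, on $\{|\xi|\leq 1\}$, $\mathcal{F}^{\textsc s}\rho$ is bounded (indeed Lipschitz by $(\rho)$(ii), with $\mathcal{F}^{\textsc s}\rho(0)=1$), so the integrand is $\lesssim |\xi|^{-2}\prod_{i=1}^d|\xi_i|^{1-2H_i}$. The singularity $\prod_i|\xi_i|^{1-2H_i}$ is locally integrable since each $1-2H_i>-1$, and the extra $|\xi|^{-2}$ is a $d$-dimensional singularity that is integrable near $0$ precisely when $d>2$, i.e. when $d\geq 3$; for $d\leq 2$ one must instead absorb $|\xi|^{-2}$ into the anisotropic factors using $|\xi|^{-2}\leq |\xi_j|^{-2}$ for the index $j$ realizing the max and checking that $1-2H_j-2>-1$ fails in general — so one really uses the full strength of the hypothesis $H<d-1$ here, exactly as \eqref{defi:h-scal-spa} is the spatial analogue of \eqref{defi:h-scal}. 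The condition $H=\sum H_i<d-1$ is what guarantees that, after the worst-case redistribution of the $|\xi|^{-2}$ factor among the coordinates, all the resulting exponents stay above $-1$.

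The main obstacle, as in the space-time case, is the careful region-by-region splitting and the optimal allocation of the decay/singularity exponents among the $d$ coordinate directions near $0$, so as to exploit $H<d-1$ sharply; this is a purely computational matter once the exact formula $\int_0^\infty \mathcal{F}^{\textsc s}p_s(\xi)\,ds=2|\xi|^{-2}$ is in hand. Since these estimates are genuinely parallel to (and strictly easier than) those proving Lemma \ref{lem:cstt-renorm} in Section \ref{subsec:proof-renorm-cstt-tech}, I would present only the reduction to $\cj_{\rho,\bh}=2\int |\mathcal{F}^{\textsc s}\rho(\xi)|^2|\xi|^{-2}\cn_{\bh}(\xi)\,d\xi$ and the splitting into $\{|\xi|\leq 1\}$ and $\{|\xi|>1\}$ in detail, then refer to the space-time argument for the remaining book-keeping, which is precisely the strategy announced in the statement ("the proof of this property can be shown with similar estimates to the ones in Section \ref{subsec:proof-renorm-cstt-tech}").
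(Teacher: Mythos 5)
Your overall strategy---compute $\int_0^\infty \mathcal{F}^{\textsc s}p_s(\xi)\,ds=2|\xi|^{-2}$ and then split $\R^d$ into a neighbourhood of the origin and its complement, mimicking the estimates in Section~\ref{subsec:proof-renorm-cstt-tech}---is exactly what the paper has in mind (the paper does not spell out the argument, it only asserts that the proof is analogous to that of Lemma~\ref{lem:cstt-renorm}), and the closed-form evaluation of the inner $s$-integral is a genuine simplification over the space-time setting.

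There is, however, a real error in your near-origin analysis. You claim that for $d\geq 3$ the integral $\int_{|\xi|\leq 1}|\xi|^{-2}\prod_i|\xi_i|^{1-2H_i}\,d\xi$ is automatically finite because $|\xi|^{-2}$ and $\prod_i|\xi_i|^{1-2H_i}$ are each locally integrable, with the hypothesis $H<d-1$ only entering when $d\leq 2$. That inference is not valid: local integrability of two factors does not give local integrability of their product, and the integral does diverge whenever $H\geq d-1$, regardless of $d$. A concrete counterexample: $d=3$ and $H_1=H_2=H_3=0.9$, so $H=2.7>2=d-1$; the integrand is then homogeneous of degree $d-2-2H=-4.4<-d$, hence not integrable at $0$. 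The condition $H<d-1$ is thus needed for \emph{every} $d$, and the clean argument is the homogeneity computation, which is the spatial analogue of the bound on $\cj_0$ in \eqref{cj-g}--\eqref{cj-g-bis}: since the integrand is homogeneous of degree $d-2-2H$, writing $\xi=r\omega$ with $|\omega|=1$ gives
\begin{equation*}
\int_{|\xi|\leq 1}\frac{\prod_i|\xi_i|^{1-2H_i}}{|\xi|^2}\,d\xi
=\bigg(\int_{|\omega|=1}\prod_i|\omega_i|^{1-2H_i}\,d\omega\bigg)\int_0^1 r^{2d-3-2H}\,dr\, ,
\end{equation*}
where the angular factor is finite because each $1-2H_i>-1$, and the radial integral converges iff $2d-3-2H>-1$, that is iff $H<d-1$. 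For the exterior region your sketch is also a little too quick---taking all $\tau_i$ close to $1$ simultaneously is not allowed, since on $\{|\xi|>1\}$ individual coordinates $|\xi_i|$ may still be small---but the region-by-region choice $\tau_i=1$ where $|\xi_i|\geq 1$ and $\tau_i=0$ where $|\xi_i|\leq 1$, together with a further splitting as in \eqref{def: decompose J_infty}, handles this exactly as in the space-time case, as you indicate. With the near-origin paragraph corrected along the lines above, the proposal is sound.
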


\smallskip

We are now in a position to present the (expected) counterpart of Theorem \ref{theo:conv-k-rp} for the spatial situation. 

\begin{theorem}\label{theo:conv-k-rp-spa}
Let $\rho:\R^d \to \R$ be a mollifier satisfying Assumption $(\rho)$, \textcolor{blue}{and fix $d\geq 2$}. 
Let $\bh=(H_1,\ldots,H_d)\in (0,1)^{d}$ be a vector of Hurst parameters such that
\begin{align}\label{strengthened 4.17-spa}
d-\frac32< H \leq  d-1,\
\end{align}
where we recall that $H$ is given by \eqref{defi:sum-h}. In this setting, fix $\al < H-d$.

For $n\geq1$, define $\dw^n$ as in \eqref{def:approximate noise} and set $\widehat{\pmb{W}}^{n}:=\mathrm{Renorm}(\uw^{n},\mathfrak{c}_{\rho,\bh}^{(n)})$, 
with
\begin{equation}\label{renormal-spa}
\mathfrak{c}_{\rho,\bh}^{(n)}:=
\begin{cases}
2^{2n(d-H-1)}c_{\bh}^2 \cj_{\rho,\bh}  & \text{if} \ H<d-1\\
&\\
c_{\bh}^2 \int_{|\xi|\geq 2^{-n}}  |\mathcal{F}^{\textsc s}{\rho}(\xi)|^2   \cn_{\bh}(\xi)\bigg( \int_0^\infty ds\, \mathcal{F}^{\textsc s} p_s(\xi)\bigg)\, d\xi& \text{if} \ H=d-1
\end{cases}
\end{equation}
where the constant $c_{\bh}$ is defined in \eqref{cstt-c-h} and the quantity $\cj_{\rho,\bh}$ in \eqref{eq:cj-c-n-spa}.

Then for any weight $w(x):=(1+|x|)^\ka$ with $\ka>0$ and for the distance $d_{\alpha,w}$ given by \eqref{def: global norm-rp}, there exists an $(\al,K)$-rough path $\widehat{\uw}$ such that almost surely
\begin{equation}\label{desired-convergence-spa}
\lim_{n\to\infty}d_{\al,w}(\widehat{\uw}^{n},\widehat{\uw}) = 0.
\end{equation}
\end{theorem}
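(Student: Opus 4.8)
The plan is to follow exactly the strategy of the space-time case (Theorem~\ref{theo:conv-k-rp}), with the formal substitution $H_0=1$, since the spatial kernel $\tilde K$ in \eqref{kernel-k-tilde} plays precisely the role that $\int_0^\infty ds\, K(s,\cdot)$ would play after integrating out the (now trivial) time variable. Concretely, by Lemma~\ref{complet} the space $(\mathcal E^{\tilde K}_{\al;w},d_{\al,w})$ is complete, so it suffices to show that $(\widehat{\pmb W}^n)_{n\ge1}$ is almost surely Cauchy for $d_{\al,w}$. By the Borel--Cantelli argument that underlies such statements, this in turn reduces to moment bounds of the form
\begin{equation}\label{eq:plan-moment}
\E\big[ \|\dw^{n+1}-\dw^n\|_{\al;T,w}^p\big] \lesssim 2^{-\epsilon n p}
\quad\text{and}\quad
\E\big[ \|\mathbf W^{\mathbf 2,n+1}-\mathbf W^{\mathbf 2,n}-(\mathfrak c^{(n+1)}_{\rho,\bh}-\mathfrak c^{(n)}_{\rho,\bh})\|_{2\al+2;T,w}^p\big] \lesssim 2^{-\epsilon n p}
\end{equation}
for some $\epsilon>0$ and all large enough $p$, together with the analogous bounds controlling $\widehat{\pmb W}^n$ against the candidate limit. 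Since everything lives in a fixed (first plus second) Wiener chaos, by Gaussian hypercontractivity it is enough to establish the $p=2$ versions of these bounds, i.e. second-moment estimates on $\langle\dw^{n+1}-\dw^n,\mathcal S^\delta_{s,x}\vp\rangle$ and on the corresponding increments of the L\'evy area; a Kolmogorov/Garsia--Rodemich--Rumsey type argument then upgrades these to the Besov norms in \eqref{eq:plan-moment}.

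The first-chaos bound is standard: writing $\langle\dw^n,\mathcal S^\delta_{s,x}\vp\rangle$ via the spatial Fourier transform and the covariance $c_{\bh}^2\,\cn_{\bh}$, the difference $\dw^{n+1}-\dw^n$ corresponds in Fourier to the multiplier $|\mathcal F^{\textsc s}\rho(2^{-(n+1)}\xi)|^2-|\mathcal F^{\textsc s}\rho(2^{-n}\xi)|^2$, which is supported (essentially) in $|\xi|\gtrsim 2^n$ and there decays using Assumption~$(\rho)(iii)$; combined with the scaling of $\mathcal S^\delta_{s,x}\vp$ and the homogeneity of $\cn_{\bh}$, this yields $\lesssim \delta^{2\al}\,2^{-\epsilon n}$ provided $\al<H-d$, which is exactly the hypothesis. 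The second-chaos term is the substantial one. Here one expands $\mathbf W^{\mathbf 2,n}_{s,x}(t,y)=\ci^n_{x}(y)\cdot\dw^n(y)$ into its chaos decomposition: a genuine second-chaos part plus a deterministic ("trace") part equal to $\E[\ci^n_x(y)\,\dw^n(y)]=\big(\tilde K\ast(c_{\bh}^2\cn_{\bh})\ast\rho_n\ast\rho_n\big)(0)$-type quantity. The renormalization constant $\mathfrak c^{(n)}_{\rho,\bh}$ in \eqref{renormal-spa} is precisely chosen to absorb this trace term (the divergent part of it), so that $\mathbf W^{\mathbf 2,n}-\mathfrak c^{(n)}_{\rho,\bh}$ has a finite-variance trace part and a second-chaos part, both of which must be shown to satisfy \eqref{eq:plan-moment}. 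For the second-chaos part one uses the $\tilde K$-Chen relation together with Wick's formula to reduce the variance of $\langle\cdot,\mathcal S^\delta_{s,x}\vp\rangle$ to a convolution integral involving two copies of $\mathcal F^{\textsc s}p_\bullet$, two copies of $\cn_{\bh}$, two mollifier factors and the test-function kernels; one then checks convergence and the rate using the homogeneity count, which closes exactly in the regime $d-\tfrac32<H\le d-1$ (the $\tfrac32$ being the analogue of $\tfrac32$, i.e. $d+\tfrac32$ shifted by the loss of the time dimension, and being what forces the restriction $\al>-\tfrac32$ implicitly; note also the hypothesis $d\ge2$, needed so that $H<d-1$ is compatible with $H_i<1$).

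The boundary case $H=d-1$ requires the separate definition of $\mathfrak c^{(n)}_{\rho,\bh}$ in \eqref{renormal-spa}: there the trace integral diverges logarithmically in $n$ (this is the spatial analogue of Proposition~\ref{prop:asymp-renorm-cstt}), and one subtracts the truncated integral over $|\xi|\ge 2^{-n}$; the point is then that the \emph{difference} $\mathfrak c^{(n+1)}_{\rho,\bh}-\mathfrak c^{(n)}_{\rho,\bh}$ exactly matches the divergent part of $\E[\ci^{n+1}_x(y)\dw^{n+1}(y)-\ci^n_x(y)\dw^n(y)]$, leaving an $O(1)$-in-$n$, in fact $O(2^{-\epsilon n})$, remainder once one also accounts for the off-diagonal $(s,x)\ne(t,y)$ increments using the $\tilde K$-Chen relation. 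Finally, identifying the limit $\widehat{\uw}=(\dw,\widehat{\mathbf W}^{\mathbf 2})$: $\dw$ is the obvious element of $\cac^\al_w$ (its existence in this space is itself part of the first-chaos estimate applied with $n\to\infty$), and $\widehat{\mathbf W}^{\mathbf 2}$ is defined as the $L^2$-limit of the renormalized second-chaos objects; one checks it satisfies the $\tilde K$-Chen relation by passing to the limit in the relation satisfied by each $\mathbf W^{\mathbf 2,n}$ (the Chen relation being stable under the renormalization, since the subtracted constant is, well, constant), and the Besov regularity $\widehat{\mathbf W}^{\mathbf 2}\in\pmb{\cac}^{2\al+2}_w$ follows from the uniform-in-$n$ moment bounds and Fatou. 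I expect the main obstacle to be the bookkeeping in the second-chaos variance computation in the boundary case $H=d-1$: one must show that, after subtracting $\mathfrak c^{(n)}_{\rho,\bh}$, all the would-be logarithmic divergences cancel between the diagonal and off-diagonal contributions and one is left with a summable-in-$n$ bound, and this cancellation is delicate because it relies on the precise interplay between the $\tilde K$-Chen relation, the even symmetry of $\rho$, and the homogeneity of $\cn_{\bh}$.
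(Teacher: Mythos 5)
Your proposal follows essentially the same route as the paper: the formal substitution $H_0=1$, passing from the space-time covariance formulas to their spatial counterparts, the Wick-formula/chaos decomposition of $\mathbf{W}^{\mathbf{2},n}$ with the renormalization constant absorbing the divergent trace, second-moment estimates upgraded via Gaussianity and a multiparametric Garsia--Rodemich--Rumsey bound, and a Cauchy/Borel--Cantelli conclusion in the complete space $\mathcal{E}^{K}_{\al;w}$. The paper makes this precise through the spatial analogues (Propositions~\ref{prop:estim-mom-first-spa} and~\ref{prop:estim-mom-second-spa}, Lemmas~\ref{lem:estim-fourier-kti}--\ref{lem:decompo-esp-xi-2-spa}) and then points to Section~\ref{subsec:conclusion}, exactly the structure you describe.
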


\begin{proof}
See Section \ref{sec:stochastic-constructions-spa} for a survey of the adaptations to be made with respect to the arguments used in the proof of Theorem \ref{theo:conv-k-rp}.
\end{proof}

By injecting the $K$-rough path constructed in Theorem \ref{theo:conv-k-rp-spa} into the general wellposedness statement of Theorem \ref{theo:solution-map}, we immediately derive the following spatial equivalent of Definition \ref{defi:strato-sol}.

\begin{definition}\label{defi:strato-sol-spa}
Let $\rho$ be the weight given by $\rho(x):=p_1(x)$.  Let $\bh\in (0,1)^{d}$ be a vector of Hurst parameters such that 
\begin{equation}\label{cond-h-strato-spa}
d-\frac43< H \leq d-1 \, .
\end{equation}
Besides, fix $\al <0$ such that $-\frac43 < \al < H-d$, as well as an arbitrary time horizon $T>0$ and an initial condition $\psi\in L^\infty(\R^d)$. Then, using the notations of Theorem \ref{theo:solution-map} and Theorem \ref{theo:conv-k-rp-spa}, we call $u:=\Phi^{K,T}_{\al,w_1,w_2}(\widehat{\uw},\psi)$ the \emph{renormalized Stratonovich solution} of equation \eqref{eq:she-intro}, with initial condition $\psi$. In particular, $u$ is the (almost sure) limit, in $L^\infty([0,T]\times \R^d)$, of the sequence $(u^{n})_{n\geq 1}$ of classical solutions of the equation 
\begin{equation}\label{eq:u-square-n-spa}
\left\{
\begin{array}{l}
\partial_{t} u^{n}  = \frac12 \Delta u^{n} + u^{n}\, \dw^n-\mathfrak{c}_{\rho,\bh}^{(n)}\, u^{n} \, , \quad t\in [0,T],\,  x\in \R^{d} \, ,\\
u^{n}_0(x)=\psi(x) \ .
\end{array}
\right.
\end{equation}
\end{definition}

\smallskip

In a similar way to Proposition \ref{prop:asymp-renorm-cstt} (and using similar proof arguments), we can finally show that the constant $\mathfrak{c}_{\rho,\bh}^{(n)}$ in \eqref{renormal-spa} adopts a specific behaviour when $H=d-1$.

\begin{proposition}\label{prop:asymp-renorm-cstt-spa}
In the setting of Theorem \ref{theo:conv-k-rp-spa}, assume that $H=d-1$. Then, as $n$ tends to infinity, it holds that
\begin{equation}\label{decompo-cstt-border-spa}
\mathfrak{c}_{\rho,\bh}^{(n)}=n\cdot C_{\bh}+O(1),
\end{equation}
for some constant $C_{\bh}$ independent of $\rho$ and $K$.
\end{proposition}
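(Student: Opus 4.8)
The plan is to derive Proposition~\ref{prop:asymp-renorm-cstt-spa} from the explicit formula \eqref{renormal-spa} in the border case $H=d-1$, by a careful asymptotic analysis of the integral
\[
\mathfrak{c}_{\rho,\bh}^{(n)}=c_{\bh}^2 \int_{|\xi|\geq 2^{-n}}  |\mathcal{F}^{\textsc s}{\rho}(\xi)|^2   \cn_{\bh}(\xi)\bigg( \int_0^\infty ds\, \mathcal{F}^{\textsc s} p_s(\xi)\bigg)\, d\xi
\]
as $n\to \infty$, following the same scheme used to prove Proposition~\ref{prop:asymp-renorm-cstt}. First I would compute the inner time integral explicitly: since $\mathcal{F}^{\textsc s} p_s(\xi)=e^{-s|\xi|^2/2}$, we have $\int_0^\infty \mathcal{F}^{\textsc s} p_s(\xi)\,ds = \tfrac{2}{|\xi|^2}$. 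Thus the integrand becomes $2|\xi|^{-2}\,|\mathcal{F}^{\textsc s}\rho(\xi)|^2\,\prod_{i=1}^d |\xi_i|^{-(2H_i-1)}$. Using $H=d-1$, i.e. $\sum_i(2H_i-1)=2H-d=d-2$, the homogeneity degree of $|\xi|^{-2}\prod_i|\xi_i|^{-(2H_i-1)}$ is exactly $-(d+2-d)-2 = -d$, wait — more precisely the product term has degree $-(d-2)$ and the $|\xi|^{-2}$ contributes $-2$, for a total scaling degree $-d$. This is precisely the critical (logarithmically divergent) homogeneity over $\R^d$, which is what forces the $n\cdot C_{\bh}$ behaviour rather than a convergent or polynomially-growing integral. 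The factor $|\mathcal{F}^{\textsc s}\rho(\xi)|^2$ is bounded (and tends to $1$ at the origin since $\int\rho=1$), so it regularizes the large-$|\xi|$ behaviour.

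The key step is then to isolate the logarithmic divergence. I would split the region $\{|\xi|\geq 2^{-n}\}$ into an annulus-type decomposition $\{2^{-n}\leq |\xi|\leq 1\}$ and $\{|\xi|\geq 1\}$. On $\{|\xi|\geq 1\}$ the integral is a fixed finite constant (finiteness there follows exactly as in Lemma~\ref{lem:cstt-renorm-spa}, using Assumption~$(\rho)(iii)$ to gain decay in $\mathcal{F}^{\textsc s}\rho$), contributing to the $O(1)$ term. On $\{2^{-n}\leq|\xi|\leq 1\}$, I would write $|\mathcal{F}^{\textsc s}\rho(\xi)|^2 = 1 + \big(|\mathcal{F}^{\textsc s}\rho(\xi)|^2-1\big)$. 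Since $\mathcal{F}^{\textsc s}\rho$ is Lipschitz (Assumption~$(\rho)(ii)$) with $\mathcal{F}^{\textsc s}\rho(0)=1$, and $\rho$ is even so $\mathcal{F}^{\textsc s}\rho$ is real, one has $\big||\mathcal{F}^{\textsc s}\rho(\xi)|^2-1\big|\lesssim |\xi|$ near the origin; multiplied against the degree-$(-d)$ kernel this yields a degree-$(1-d)$ integrand, which is \emph{integrable} at the origin, hence contributes an $O(1)$ term uniformly in $n$. The remaining main term is
\[
c_{\bh}^2 \int_{2^{-n}\leq |\xi|\leq 1} |\xi|^{-2}\prod_{i=1}^d|\xi_i|^{-(2H_i-1)} \,d\xi,
\]
and by passing to generalized polar coordinates $\xi = r\omega$ with $r=|\xi|$, $\omega\in S^{d-1}$, the angular part factors out as a finite constant $\Sigma_{\bh}:=\int_{S^{d-1}}\prod_i|\omega_i|^{-(2H_i-1)}\,d\sigma(\omega)$ (finite because each exponent $2H_i-1<1$), and the radial part is $\int_{2^{-n}}^1 r^{-d}\cdot r^{d-1}\,dr = \int_{2^{-n}}^1 \frac{dr}{r} = n\log 2$. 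Hence the main term equals $n\cdot C_{\bh} + O(1)$ with $C_{\bh} = (\log 2)\, c_{\bh}^2\, \Sigma_{\bh}$, which is manifestly independent of $\rho$ and of the choice of localized heat kernel $K$ (indeed $K$ has disappeared entirely since $\int_0^\infty \mathcal{F}^{\textsc s} p_s\,ds$ replaced $\tilde K$ via a Plancherel/Fubini identity applied to \eqref{renormal-spa}). Collecting the three contributions gives \eqref{decompo-cstt-border-spa}.

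The main obstacle, I expect, will be making the angular integral $\Sigma_{\bh}$ rigorously well-defined and finite when some $H_i$ is small: near a coordinate hyperplane $\{\omega_i=0\}$ on the sphere the factor $|\omega_i|^{-(2H_i-1)}$ blows up, and one must check this is an integrable singularity on $S^{d-1}$ — which it is precisely because $2H_i-1<1$, but the bookkeeping (especially handling several of the $\omega_i$ vanishing simultaneously) requires some care, ideally by changing variables to $u_i=\xi_i^2$ or by a partition-of-unity argument reducing to product-type estimates as in the proof of Lemma~\ref{lem:cstt-renorm}. A secondary technical point is justifying the Fubini interchange that turns the $\tilde K$-based expression implicit in \eqref{renormal-spa} into the frequency-side formula with $\int_0^\infty\mathcal{F}^{\textsc s} p_s(\xi)\,ds$; this is licit on $\{|\xi|\geq 2^{-n}\}$ since everything is nonnegative there, so Tonelli applies directly. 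Once these two points are settled, the rest is the routine scaling computation sketched above.
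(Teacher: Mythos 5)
Your proposal is correct and follows essentially the same strategy the paper uses for the space-time analogue (Proposition~\ref{prop:asymp-renorm-cstt}, proved in Section~\ref{subsec:proof-asymp}), to which the paper simply defers for the spatial case: isolate the outer region $\{|\xi|\geq 1\}$ as an $O(1)$ contribution (using the $\cj_\infty$-type estimate from Lemma~\ref{lem:cstt-renorm-spa}), peel off $|\mathcal{F}^{\textsc s}\rho|^2-1$ using the Lipschitz bound and $\mathcal{F}^{\textsc s}\rho(0)=1$ to produce a second $O(1)$ term, and then evaluate the remaining scale-critical homogeneous integral in polar coordinates to extract the $n\log 2$ factor, with the angular integral absorbed into $C_{\bh}$. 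The exponent bookkeeping (radial density $r^{-1}$ precisely when $H=d-1$, angular factor $\prod_i|\omega_i|^{1-2H_i}$ integrable on $S^{d-1}$ since each exponent exceeds $-1$) is right, and your caution about multiple $\omega_i$ vanishing simultaneously is resolved as you say, by local charts on the sphere.

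One small mis-statement worth flagging: you assert that ``$K$ has disappeared entirely since $\int_0^\infty\mathcal{F}^{\textsc s}p_s\,ds$ replaced $\tilde K$ via a Plancherel/Fubini identity applied to \eqref{renormal-spa}.'' In fact the definition \eqref{renormal-spa} of $\mathfrak{c}_{\rho,\bh}^{(n)}$ is already written with $\mathcal{F}^{\textsc s}p_s$ and never involves $K$ or $\tilde K$ in the first place — the substitution of $\mathcal{F}^{\textsc s}p$ for $\mathcal{F}^{\textsc s}\tilde K$ is something that happens in the proof of Lemma~\ref{lem:decompo-esp-xi-2-spa}, not in the renormalization constant — so the claimed $K$-independence of $C_{\bh}$ is immediate from the formula rather than the result of any Fubini step. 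This is cosmetic and does not affect the validity of your argument.
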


\begin{remark}
Observe that the assumptions in Theorem \ref{theo:conv-k-rp-spa} (or Definition \ref{defi:strato-sol-spa}) and in Proposition~\ref{prop:asymp-renorm-cstt-spa} cover the case where $d=2$ and $H_1=H_2=\frac12$ . In other words, these results encompass the situation where $\dw$ is a spatial white noise on $\R^2$.
\end{remark}

\section{Proof of Theorem \ref{theo:conv-k-rp}}\label{sec:stochastic-constructions}

This section is devoted to the proof of Theorem \ref{theo:conv-k-rp}, that is to the construction of the $(\al,K)$-rough path $\widehat{\uw}$ at the basis of the Stratonovich interpretation of the model (along Definition \ref{defi:strato-sol}). 

\smallskip

\textit{Therefore, from now on and for the rest of the section, we fix a mollifier $\rho$, \lastchange{some Hurst indexes $H_0,\bh$,} and a parameter $\al$ such that the assumptions in Theorem \ref{theo:conv-k-rp} are all met.}

\smallskip

We recall that the convenient notation $\cn_{H_0,\bh}$ has been introduced in \eqref{notation:cn-h}, and that we have set $c_{H_0,\bh}:=c_{H_0}c_{\bh}$, where $c_{H_0}$ and $c_{\bh}$ are defined by \eqref{cstt-c-h}. For further reference, let us label the following covariance formulas, which immediately generalize \eqref{eq:rep-gamma-fourier} in the regularized setting.

\begin{lemma}
Let $\dw^n$ be the smoothed noise defined by \eqref{def:approximate noise} and recall that the kernel $K$ is defined by~\eqref{decompo-k}. For every fixed $n\geq 1$, the families $\{\dw^n(t,y); (t,y)\in\R^{d+1}\}$ and $\{K\ast\dw^n(t,y); (t,y)\in\R^{d+1}\}$ are centered Gaussian processes with respective covariance functions given by the formulas
\begin{equation}\label{cova-xi-n-xi-n}
\mathbb{E}\big[ \dw^n(t,y) \dw^n(\tti,\yti) \big]=c_{H_0,\bh}^2 \int_{\R^{d+1}} d\la d\xi\, |\cf\rho_n(\la,\xi)|^2 \cn_{H_0,\bh}(\la,\xi)e^{\imath (\la (t-\tti)+  \xi \cdot (y-\yti))}\, ,
\end{equation}
and
\begin{align}
&\mathbb{E}\big[ (K\ast\dw^n)(t,y) (K\ast\dw^n)(\tti,\yti) \big]\nonumber\\
&=c_{H_0,\bh}^2 \int_{\R^{d+1}} d\la d\xi\, |\cf\rho_n(\la,\xi)|^2 |\cf K(\la,\xi)|^2\cn_{H_0,\bh}(\la,\xi)\lastchange{e^{\imath (\la (t-\tti)+  \xi \cdot (y-\yti))}}\, .\label{cova-i-n-i-n}
\end{align}
\end{lemma}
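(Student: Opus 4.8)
The plan is to deduce both covariance formulas from the spectral representation of the covariance of the noise $\dw$ together with Parseval's identity, after rewriting $\dw^n(t,y)$ and $(K\ast\dw^n)(t,y)$ as $\dw$ tested against explicit translated kernels. The only point requiring genuine care will be checking that these kernels lie in $L^2$ of the spectral measure, so that the objects at hand are bona fide square-integrable Gaussian variables and the displayed integrals converge; everything else is bookkeeping with the Fourier transform.

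I would first record the spectral form of the covariance of $\dw$ coming from \eqref{eq:def-gamma}--\eqref{eq:rep-gamma-fourier} and the notation \eqref{notation:cn-h}: for every pair of real test functions $\vp,\psi$ on $\R^{d+1}$ for which the right-hand side converges,
\[
\E\big[\langle\dw,\vp\rangle\,\langle\dw,\psi\rangle\big]=c_{H_0,\bh}^2\int_{\R^{d+1}}\cf\vp(\la,\xi)\,\overline{\cf\psi(\la,\xi)}\,\cn_{H_0,\bh}(\la,\xi)\,d\la d\xi\, ,
\]
the integrand being even in $(\la,\xi)$; since a fixed convolution of a centered Gaussian field is again a centered Gaussian field, this already yields the centering and Gaussianity claims. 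I would also record the elementary identity $\cf\big[f(t-\cdot,y-\cdot)\big](\la,\xi)=e^{-\ii(t\la+y\cdot\xi)}\overline{\cf f(\la,\xi)}$, valid for any real $f\in L^1(\R^{d+1})$, which is exactly what makes the $(t,y)$-translations disappear in modulus.

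Next, since convolution commutes with the differentiation defining $\dw$, one has $\dw^n=\rho_n\ast\dw$, hence for fixed $n\geq1$ one may write $\dw^n(t,y)=\langle\dw,\rho_n(t-\cdot,y-\cdot)\rangle$. By the identity just recorded, the relevant spectral integrand is $|\cf\rho_n(\la,\xi)|^2\,\cn_{H_0,\bh}(\la,\xi)$, and it is integrable on $\R^{d+1}$: after splitting frequency space according to whether each coordinate of $(\la,\xi)$ is $\leq1$ or $>1$, one bounds $|\cf\rho_n|$ by $1$ in the small-coordinate directions, where $\cn_{H_0,\bh}$ is integrable because each exponent $1-2H_0,1-2H_i$ exceeds $-1$ (as $H_0,H_i\in(0,1)$), and one invokes Assumption~$(\rho)(iii)$ with $\tau=1$ — recalling $\cf\rho_n(\la,\xi)=\cf\rho(2^{-2n}\la,2^{-n}\xi)$ — in the large-coordinate directions, where the resulting decay renders $\cn_{H_0,\bh}$ integrable. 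Inserting $\vp=\rho_n(t-\cdot,y-\cdot)$ and $\psi=\rho_n(\tti-\cdot,\yti-\cdot)$ into the covariance formula of the previous paragraph, the two factors $\cf\rho_n$ combine into $|\cf\rho_n|^2$ and the translation phases combine into $e^{-\ii(\la(t-\tti)+\xi\cdot(y-\yti))}$; using evenness of the integrand to flip the sign in the exponential produces exactly \eqref{cova-xi-n-xi-n}.

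Finally, for \eqref{cova-i-n-i-n} I would repeat the same reasoning after observing that the parabolic-scaling decomposition \eqref{decompo-k} gives $\|K\|_{L^1(\R^{d+1})}\leq\|K_0\|_{L^1(\R^{d+1})}\sum_{\ell\geq0}2^{-2\ell}<\infty$, so that $K\ast\rho_n$ is a well-defined $L^1$ function with $\cf(K\ast\rho_n)=\cf K\cdot\cf\rho_n$ and $\cf K$ bounded, and $(K\ast\dw^n)(t,y)=\langle\dw,(K\ast\rho_n)(t-\cdot,y-\cdot)\rangle$. Running the argument of the previous paragraph verbatim with $\rho_n$ replaced by $K\ast\rho_n$, and using $|\cf(K\ast\rho_n)|^2=|\cf K|^2|\cf\rho_n|^2$ — the boundedness of $\cf K$ leaving the integrability check unaffected — yields \eqref{cova-i-n-i-n}. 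As indicated, the crux, such as it is, lies precisely in this joint control of the spectral integrand near the frequency origin and at infinity; once it is in place the two identities are a direct application of Parseval's theorem.
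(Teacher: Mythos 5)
The paper states this lemma without proof, describing the covariance formulas as "immediately" generalizing \eqref{eq:rep-gamma-fourier}; your proof correctly supplies the reasoning along the natural and intended route. You rewrite $\dw^n(t,y)$ and $(K\ast\dw^n)(t,y)$ as $\dw$ tested against translated kernels, pass to Fourier space via the spectral form of the covariance, and verify integrability of the spectral integrand by splitting frequency space into small and large coordinates (bounding $|\cf\rho_n|$ by $1$ near the origin and invoking Assumption~$(\rho)(iii)$ at infinity) — this is precisely the pattern the paper itself uses later, e.g.\ in the derivation of \eqref{eq:test W^n1}--\eqref{proof-mom-1} and in the proof of Lemma~\ref{lem:cstt-renorm}. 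The observation that $\|K\|_{L^1}\lesssim\|K_0\|_{L^1}\sum_{\ell\geq0}2^{-2\ell}<\infty$ from the scaling decomposition \eqref{decompo-k} is the correct justification for $\cf(K\ast\rho_n)=\cf K\cdot\cf\rho_n$, and using evenness of the integrand to fix the sign in the oscillatory factor is a clean way to match the displayed formulas. Your proof is correct and takes essentially the same approach the paper had in mind.
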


Just as in \cite[Corollary 3.5]{De16}, the proof of Theorem \ref{theo:conv-k-rp} essentially relies on suitable moments estimates (see Proposition \ref{prop:estim-mom-first} and Proposition \ref{prop:estim-mom-second} below). The transition from these estimates to the desired convergence property will then go through the following multiparametric and distributional version of the Garsia-Rodemich-Rumsey Lemma. Observe that this kind of property is one of the key technical ingredients in the theory of regularity structures.

\begin{lemma}[Multiparametric G-R-R lemma]\label{GRR-gene}
Fix a regularity parameter $\beta$ sitting in $(-(d+1),0)$, as well as a weight $w$ on $\R^d$. Then there exists a finite set $\Psi$ of functions in $\cac^{2(d+1)}(\R^{d+1})$ with support in $\cb_\scal(0,1)$ such that the following property holds true: assume that  $\zeta: \R^{d+1} \to \cd'_{2(d+1)}(\R^{d+1})$ is a map with increments of the form
$$\zeta_{s,x}-\zeta_{t,y}=\sum_{i=1}^{r} 
[\theta^i(s,x)-\theta^i(t,y)] \cdot \zeta^{\sharp,i}_{t,y},$$
for some $\theta^i \in \cac_w^\mu(\R^{d+1})$ with $\mu\in [0,\min(1,-\beta))$, and some distributions  $\zeta^{\sharp,i}\in \pmb{\cac}^\beta_w(\R^{d+1})$, where we recall that the spaces $\mathcal{C}^\beta_w$ are introduced in Definition \ref{def:besov-space}. Then for every $T>0$, one has
\begin{align}
&\|\zeta\|_{\beta+\mu;T,w^2}\nonumber\\
&\lesssim \sup_{\psi\in \Psi} \sup_{n\geq 0} \sup_{(s,x)\in \Lambda_\scal^n \cap([-(T+2),T+2]\times \R^d) } 2^{n(\beta+\mu)} \frac{|\langle \zeta_{s,x}, \psi_{s,x}^n \rangle |}{w(x)^2}+\sum_{i=1,\ldots,r} \|\theta^i\|_{\mu;T+2,w}\|\zeta^{\sharp,i}\|_{\beta;T+2,w} \ ,\label{bound-GRR-gene}
\end{align}
where  the discrete set $\Lambda_\scal^n$ is defined by $\Lambda_\scal^n:=\{(2^{-2n} k_0,2^{-n}k_1,\ldots,2^{-n}k_d); \ k_0,k_1,\ldots,k_d\in \Z\}$, and where norms for $\theta^i$ and $\zeta^{\sharp,i}$ are respectively given in Definition \ref{def:besov-space-positive} and \ref{def:besov-space}. For the sake of clarity, we have also used the standard notation $\psi_{s,x}^n:=\mathcal{S}^{2^{-n}}_{s,x}\psi$ in the \lastchange{right-hand side} of ~\eqref{bound-GRR-gene}.
\end{lemma}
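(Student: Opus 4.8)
The plan is to reduce this multiparametric, distributional Garsia--Rodemich--Rumsey statement to a combination of two ingredients: a \emph{discrete wavelet-type characterization} of the weighted Besov norm $\|\cdot\|_{\beta+\mu;T,w^2}$ and a \emph{chaining/telescoping argument} over the dyadic grids $\Lambda_\scal^n$ that exploits the prescribed increment structure of $\zeta$. First I would fix a suitable finite family $\Psi\subset\cac^{2(d+1)}(\R^{d+1})$ supported in $\cb_\scal(0,1)$ playing the role of a (parabolically scaled) wavelet system: one wants that testing $\zeta_{s,x}$ against all rescaled shifts $\psi^n_{s,x}=\cs^{2^{-n}}_{s,x}\psi$, $\psi\in\Psi$, $n\geq0$, $(s,x)\in\Lambda_\scal^n$, controls the continuous quantity $\sup_{\vp\in\cb^{2(d+1)}_\scal}\sup_{\delta\in(0,1]}\delta^{-(\beta+\mu)}|\langle\zeta_{s,x},\cs^\delta_{s,x}\vp\rangle|$. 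This is exactly the content of the reconstruction-type lemma already used in the regularity-structures literature (it is the analytic backbone of Hairer's reconstruction theorem and of \cite{De16}); I would invoke it to write, for any generic test function $\vp$ at scale $\delta\in[2^{-(n+1)},2^{-n}]$ centered at an arbitrary point $(s,x)$, an expansion of $\langle\zeta_{s,x},\cs^\delta_{s,x}\vp\rangle$ as a rapidly converging sum over nearby grid points at scales $\geq n$ of the quantities $\langle\zeta_{s',x'},\psi^{m}_{s',x'}\rangle$, \emph{plus} error terms coming from moving the base point from $(s,x)$ to grid points $(s',x')$.

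The second, and more delicate, step is handling precisely those base-point-change errors, and this is where the increment hypothesis $\zeta_{s,x}-\zeta_{t,y}=\sum_i[\theta^i(s,x)-\theta^i(t,y)]\cdot\zeta^{\sharp,i}_{t,y}$ enters. When I replace $\zeta_{s,x}$ by $\zeta_{s',x'}$ in a pairing with a bump of scale $\delta\simeq2^{-n}$, the discrepancy is $\sum_i[\theta^i(s,x)-\theta^i(s',x')]\langle\zeta^{\sharp,i}_{s',x'},\cs^\delta_{s,x}\vp\rangle$; using $\theta^i\in\cac^\mu_w$ one bounds $|\theta^i(s,x)-\theta^i(s',x')|\lesssim w(x)\,2^{-n\mu}$ (since $(s,x)$ and $(s',x')$ are at parabolic distance $\lesssim2^{-n}$), and using $\zeta^{\sharp,i}\in\pmb{\cac}^\beta_w$ one bounds $|\langle\zeta^{\sharp,i}_{s',x'},\cs^\delta_{s,x}\vp\rangle|\lesssim w(x')\,2^{-n\beta}\,\|\zeta^{\sharp,i}\|_{\beta;T+2,w}$ — here one must also note that $\cs^\delta_{s,x}\vp$, although not centered at $(s',x')$, is still, up to an $O(1)$ reparametrization, a fixed multiple of an admissible test bump centered at $(s',x')$ of comparable scale, because $|(s,x)-(s',x')|_\scal\lesssim\delta$. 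Multiplying the two estimates and using the weight comparison $w(x)/w(x')\lesssim1$ (Definition \ref{def: weights}, with $M$ absorbing the diameter of $\cb_\scal(0,1)$) yields a contribution $\lesssim w(x)^2\,2^{-n(\beta+\mu)}\sum_i\|\theta^i\|_{\mu;\cdot,w}\|\zeta^{\sharp,i}\|_{\beta;\cdot,w}$; summing the geometric series over the relevant scales produces the second term on the right-hand side of \eqref{bound-GRR-gene}. The ``main'' term $\langle\zeta_{s',x'},\psi^m_{s',x'}\rangle$ at genuine grid points is then precisely bounded by the supremum quantity, with the $2^{n(\beta+\mu)}$ weight absorbed into the dyadic bookkeeping; the time-localization shift from $T$ to $T+2$ on the right is forced by the radius-$\leq2$ constraint in the definition of $D_{T,2}$ and the unit-ball support of the bumps, so one must be careful that all grid points and test bumps invoked in the expansion of a pairing centered in $[-T,T]\times\R^d$ live in $[-(T+2),T+2]\times\R^d$.

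Finally I would assemble the pieces: taking the supremum over $(s,x)\in[-T,T]\times\R^d$, over $\vp\in\cb^{2(d+1)}_\scal$ and over $\delta\in(0,1]$ of the expansion above, every term is dominated either by $w(x)^2$ times $\sup_{\psi,n,(s,x)\in\Lambda^n_\scal}2^{n(\beta+\mu)}|\langle\zeta_{s,x},\psi^n_{s,x}\rangle|/w(x)^2$ or by $w(x)^2\sum_i\|\theta^i\|_{\mu;T+2,w}\|\zeta^{\sharp,i}\|_{\beta;T+2,w}$, and dividing through by $w(x)^2$ gives exactly \eqref{bound-GRR-gene} after recognizing the left-hand side as $\|\zeta\|_{\beta+\mu;T,w^2}$ in the sense of Definition \ref{def:besov-space-2} (with order $\beta+\mu<0$, which is why we need $\mu<-\beta$). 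The hard part will be the bookkeeping in the second step: making the base-point-change estimate uniform over \emph{all} scales and \emph{all} centers simultaneously — that is, showing the error sums converge and are controlled by the two displayed quantities without any residual dependence on $(s,x)$ or on the scale $\delta$ — while respecting the weight-comparison constants, the parabolic scaling exponents $(\beta,\mu,\beta+\mu)$, and the finite range of the test functions. The algebra is routine once the dyadic decomposition is set up, but getting the exponents and the localization window right is where the real work lies; this is the point at which one leans most heavily on the arguments already carried out in \cite[Section 3]{De16} and in \cite{HL}.
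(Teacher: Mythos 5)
The paper's proof of this lemma is a one-line citation: it declares the result to be a ``weighted adaptation'' of \cite[Lemma 3.2]{De16} (itself drawn from \cite[Section 3]{hai-14}) and leaves the details to the reader. Your sketch is precisely that adaptation carried out --- dyadic/wavelet reconstruction, chaining with base-point errors split via the $\theta^i$ and $\zeta^{\sharp,i}$ bounds, weight comparison from Definition~\ref{def: weights}, and the $T\mapsto T+2$ localization --- so you take essentially the same approach as the paper, just spelled out in more detail than the authors chose to.
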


\begin{proof}
It is a mere \enquote{weighted} adaptation of the arguments of the proof of \cite[Lemma 3.2]{De16} (which was itself an adaptation of the arguments in \cite[Section 3]{hai-14}). For the sake of conciseness, we leave the details behind this slight adaptation as an exercise to the reader.
\end{proof}

As a last preliminary step, we also label the following elementary property for further use:

\begin{lemma}\label{lem:fouri-psi-prop} Recall that the sets $\cb_\scal^l$ are given by \eqref{cal B^l_s}. Let $\psi$ be a generic element of $\cb_\scal^{2(d+1)}$ and for all \lastchange{$H_0\in (0,1)$, $\mathbf{H}\in (0,1)^{d}$, consider the function $\cn_{H_0,\mathbf{H}}$} introduced in \eqref{notation:cn-h}. Then it holds that
\begin{equation}\label{int-psi-regu}
\int_{\R^{d+1}} d\la d\xi\, \cn_{H_0,\mathbf{H}}(\la,\xi)\big| \cf\psi(\la,\xi) \big| \ < \ \infty \, .
\end{equation}
\end{lemma}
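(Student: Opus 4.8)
\textbf{Proof plan for Lemma~\ref{lem:fouri-psi-prop}.}
The plan is to split the integral according to the size of the frequency variables, using the polynomial growth of $\cn_{H_0,\mathbf{H}}$ together with the decay of $\cf\psi$. First I would record the two basic facts about $\psi\in\cb_\scal^{2(d+1)}$: since $\psi$ is supported in the parabolic ball $\cb_\scal(0,1)$ (hence in a fixed compact subset of $\R^{d+1}$) and has $\cac^{2(d+1)}$ norm at most $1$, integration by parts gives the uniform bound
$$
\big| \cf\psi(\la,\xi) \big| \ \lesssim \ \frac{1}{\big(1+|\la|+|\xi_1|+\cdots+|\xi_d|\big)^{2(d+1)}}\, ,
$$
with an implicit constant that does not depend on $\psi\in\cb_\scal^{2(d+1)}$ (only on $d$). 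On the other hand, for $(\la,\xi)$ away from the coordinate hyperplanes $\cn_{H_0,\mathbf{H}}$ is continuous, while near them it has integrable singularities: indeed each factor $|\la|^{1-2H_0}$ or $|\xi_i|^{1-2H_i}$ is locally integrable in one variable because $1-2H_0>-1$ and $1-2H_i>-1$ (as $H_0,H_i\in(0,1)$).

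The main step is then a decomposition of $\R^{d+1}$ into a bounded region $B:=\{|\la|+|\xi|\leq 1\}$ and its complement. On $B$, I would bound $|\cf\psi|$ by a constant and reduce to showing that $\cn_{H_0,\mathbf{H}}$ itself is integrable on $B$; by Fubini this factorizes into the product of the one-dimensional integrals $\int_{|\la|\leq 1}|\la|^{1-2H_0}\,d\la$ and $\int_{|\xi_i|\leq 1}|\xi_i|^{1-2H_i}\,d\xi_i$, each of which is finite by the above exponent condition. On the complement $B^c$, I would use the polynomial decay of $\cf\psi$: since $\cn_{H_0,\mathbf{H}}(\la,\xi)\lesssim \big(1+|\la|+|\xi|\big)^{d+1}$ on $B^c$ (each exponent $1-2H_i$ is less than $1$, so the product of $d+1$ such factors is dominated by the $(d+1)$-th power of the total frequency), the integrand is bounded by $\big(1+|\la|+|\xi|\big)^{d+1-2(d+1)}=\big(1+|\la|+|\xi|\big)^{-(d+1)}$, which fails to be integrable by a hair. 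To fix this I would keep a bit more decay from $\cf\psi$, i.e. only spend $2(d+1)-(d+2)=d$ powers on cancelling $\cn_{H_0,\mathbf{H}}$ and retain the extra factor $\big(1+|\la|+|\xi|\big)^{-(d+2)}$, which is integrable on $\R^{d+1}$; alternatively one combines the singular part (controlled as in the bounded region, uniformly in the location of the hyperplane crossings) with a genuinely decaying tail. Either way the tail integral converges.

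The only mild subtlety — and the step I expect to require the most care — is the interaction near $B^c\cap\{\text{some }\xi_i\approx 0\}$, where one simultaneously has large total frequency (so $\cf\psi$ decays) and a coordinate singularity of $\cn_{H_0,\mathbf{H}}$. I would handle this by a further dyadic splitting: partition $B^c$ according to which coordinates are $\lesssim 1$ and which are $\gtrsim 1$; on each piece the small coordinates contribute a finite one-dimensional singular integral exactly as on $B$, the large coordinates contribute a polynomially bounded factor absorbed by the surplus decay of $\cf\psi$, and Fubini assembles the product. Summing the finitely many pieces yields \eqref{int-psi-regu}, with a bound uniform over $\psi\in\cb_\scal^{2(d+1)}$ — which is what the later stochastic constructions will actually use.
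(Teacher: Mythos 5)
Your final dyadic--splitting argument is correct and is the natural way to fill in the details behind this lemma; the paper itself supplies no formal proof, only the one-sentence remark that the $\cac^{2(d+1)}$-regularity of $\psi$ gives ``strong integrability properties for $\cf\psi$'' --- so you are reconstructing a proof the authors left implicit. Two points of sloppiness are worth flagging. First, the intermediate claim that $\cn_{H_0,\mathbf{H}}(\la,\xi)\lesssim(1+|\la|+|\xi|)^{d+1}$ on $B^c$ is false whenever some $H_i>\frac12$: the factor $|\xi_i|^{1-2H_i}$ blows up near the hyperplane $\{\xi_i=0\}$, which is certainly not disjoint from $B^c$. You acknowledge this as ``the only mild subtlety,'' but it means the first attempted fix (retaining a uniform $(1+|\la|+|\xi|)^{-(d+2)}$ tail) does not close on its own; only the coordinate-wise partition in your final paragraph actually works. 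Second, and more importantly, your Fubini factorization requires a \emph{tensorized} decay estimate, not the radial bound $|\cf\psi(\la,\xi)|\lesssim(1+\|(\la,\xi)\|)^{-2(d+1)}$ that you state at the outset. What you need is: for every multi-index $\alpha\in\N^{d+1}$ with $|\alpha|\le 2(d+1)$,
\[
\big|\cf\psi(\eta)\big|\ \lesssim\ \prod_{i=0}^{d}|\eta_i|^{-\alpha_i},
\]
which follows directly from $|\eta^{\alpha}\cf\psi(\eta)|=|\cf(\partial^\alpha\psi)(\eta)|\le\|\partial^\alpha\psi\|_{L^1}\lesssim1$ (uniform over $\psi\in\cb_\scal^{2(d+1)}$ because the support lies in a fixed ball and all derivatives are bounded by $1$). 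On the region $R_S$ where $|\eta_i|\le1$ for $i\in S$ and $|\eta_j|>1$ for $j\in S^c$, take $\alpha_i=0$ on $S$ and $\alpha_j=2$ on $S^c$; then $|\alpha|=2|S^c|\le2(d+1)$, and the integral over $R_S$ factorizes into $\prod_{i\in S}\int_{|\eta_i|\le1}|\eta_i|^{1-2H_i}\,d\eta_i\cdot\prod_{j\in S^c}\int_{|\eta_j|>1}|\eta_j|^{-1-2H_j}\,d\eta_j$, each factor being finite since $1-2H_i>-1$ and $-1-2H_j<-1$. Summing over the $2^{d+1}$ subsets $S$ gives the claim. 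With these two corrections made explicit, your proof is complete and, incidentally, shows exactly why $2(d+1)$ is the right amount of regularity to demand of $\psi$.
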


In the above lemma, note that our choice of $\psi \in \cb_\scal^{2(d+1)}$ guarantees strong integrability properties for $\cf\psi$, which are the keys to show that the integral in \eqref{int-psi-regu} is indeed finite.

\subsection{Moment estimate for the first component} In this section we will bound the covariance of $\dw^n$ considered as an element of a space of the form $\mathcal{C}^\alpha$, where $\alpha$ satisfies~\eqref{in th: condition alpha}.

\begin{proposition}\label{prop:estim-mom-first}
For all $\ell\geq 0$, $n\geq m\geq 0$, $\psi\in \cb^{2(d+1)}_\scal$ and $(s,x)\in \R^{d+1}$, it holds that
\begin{align}\label{eq:dw^n-dw^m}\mathbb{E}\big[ |\langle \dw^{n}-\dw^m,\psi^\ell_{s,x} \rangle |^{2}\big] \lesssim 2^{2\ell (d+2-(2H_0+H)+\varepsilon)} 2^{-m\varepsilon}\, ,\end{align}
where the proportional constant in $\lesssim$ does not depend on $n,m,\ell,s,x$ and where we recall that we have set $\psi_{s,x}^{\ell}:=\mathcal{S}^{2^{-\ell}}_{s,x}\psi$. 
\end{proposition}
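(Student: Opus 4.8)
The starting point is the exact covariance formula \eqref{cova-xi-n-xi-n}. Using it, for fixed $\psi\in\cb^{2(d+1)}_\scal$ and $(s,x)$ one writes
\[
\E\big[|\langle \dw^n-\dw^m,\psi^\ell_{s,x}\rangle|^2\big]
= c_{H_0,\bh}^2 \int_{\R^{d+1}} d\la d\xi \,
\big|\cf\rho_n(\la,\xi)-\cf\rho_m(\la,\xi)\big|^2\,
\big|\cf\psi^\ell_{s,x}(\la,\xi)\big|^2\,
\cn_{H_0,\bh}(\la,\xi),
\]
which is translation-invariant in $(s,x)$, so the proportionality constant cannot depend on $(s,x)$. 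By the scaling of $\cs^{2^{-\ell}}_{s,x}$, the Fourier transform satisfies $|\cf\psi^\ell_{s,x}(\la,\xi)| = |\cf\psi(2^{-2\ell}\la, 2^{-\ell}\xi)|$. I would then substitute $\la = 2^{2\ell}\la'$, $\xi = 2^{\ell}\xi'$; this pulls out a Jacobian $2^{\ell(d+2)}$, turns $\cn_{H_0,\bh}$ into $2^{\ell(2H_0+H-d-2)}$ times $\cn_{H_0,\bh}(\la',\xi')$, and leaves an integral against $|\cf\psi(\la',\xi')|^2 \cn_{H_0,\bh}(\la',\xi')$ with the mollifier factors evaluated at $(2^{2\ell}\la', 2^{\ell}\xi')$. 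Collecting the powers of $2$ produces the prefactor $2^{2\ell(d+2-(2H_0+H))}$ of \eqref{eq:dw^n-dw^m} up to the remaining integral, so it remains to bound
\[
\int_{\R^{d+1}} d\la d\xi\,
\big|\cf\rho(2^{-2n}2^{2\ell}\la,2^{-n}2^\ell\xi)-\cf\rho(2^{-2m}2^{2\ell}\la,2^{-m}2^\ell\xi)\big|^2
\,|\cf\psi(\la,\xi)|^2\,\cn_{H_0,\bh}(\la,\xi)
\]
by $C\,2^{2\ell\varepsilon}2^{-m\varepsilon}$ uniformly in $n\geq m\geq\ell$ — and separately treat the easier range $\ell > m$, where one simply bounds $|\cf\rho_n - \cf\rho_m|\lesssim 1$ (Assumption $(\rho)$, $\rho\in L^1$) and uses Lemma \ref{lem:fouri-psi-prop} for the finiteness of $\int |\cf\psi|\,\cn_{H_0,\bh}$; there $2^{-m\varepsilon}\geq 2^{-\ell\varepsilon}$ absorbs what is needed.

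\textbf{The main estimate.} The core of the argument is the increment bound on the mollifier. Writing $a:=2^{\ell-n}$, $b:=2^{\ell-m}$ (so $0< a\leq b\leq 1$ in the regime $n\geq m\geq\ell$), we must control $|\cf\rho(a^2\la,a\xi)-\cf\rho(b^2\la,b\xi)|$. Since $\cf\rho$ is Lipschitz (Assumption $(\rho)(ii)$) and bounded (since $\rho\in L^1$, $\cf\rho\in L^\infty$), for any $\theta\in[0,1]$ one has the interpolation
\[
|\cf\rho(a^2\la,a\xi)-\cf\rho(b^2\la,b\xi)|
\lesssim \min\big(1,\; |(b^2-a^2)\la| + |(b-a)\xi|\big)
\lesssim \big(|(b^2-a^2)\la| + |(b-a)\xi|\big)^\theta.
\]
With $b-a = 2^{\ell-m}(1-2^{m-n})\leq 2^{\ell-m}$ and $b^2-a^2\leq 2^{2(\ell-m)}$, choosing $\theta = \varepsilon$ (for $\varepsilon$ small, which we are free to do) gives a factor $\lesssim 2^{\varepsilon(\ell-m)}(|\la|^{1/2}+|\xi|)^\varepsilon \lesssim 2^{\varepsilon(\ell - m)}(1+|\la|+|\xi|)^\varepsilon$ — exactly the $2^{2\ell\varepsilon}2^{-m\varepsilon}$ behaviour claimed (with room to spare in $\ell$), provided the remaining integral
\[
\int_{\R^{d+1}} d\la d\xi\,(1+|\la|+|\xi|)^{2\varepsilon}\,|\cf\psi(\la,\xi)|^2\,\cn_{H_0,\bh}(\la,\xi)
\]
is finite. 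This is where Lemma \ref{lem:fouri-psi-prop} and the strong decay of $\cf\psi$ for $\psi\in\cb^{2(d+1)}_\scal$ enter: the singularity of $\cn_{H_0,\bh}$ at the coordinate hyperplanes is only $|\la|^{1-2H_0}\prod|\xi_i|^{1-2H_i}$, locally integrable since each $H_i,H_0\in(0,1)$, and at infinity the $2(d+1)$ derivatives of $\psi$ give $|\cf\psi(\la,\xi)|\lesssim (1+|\la|+|\xi|)^{-N}$ for $N$ large enough to beat the polynomial growth $(1+|\la|+|\xi|)^{2\varepsilon}\cn_{H_0,\bh}$ — so the integral converges. A small technical point is to split $\R^{d+1}$ into the region near the coordinate hyperplanes (handled by local integrability of $\cn_{H_0,\bh}$ together with boundedness of $|\cf\psi|$) and the complementary region (handled by decay of $|\cf\psi|$), mirroring the proof of Lemma \ref{lem:fouri-psi-prop}.

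\textbf{Obstacle and bookkeeping.} The conceptually delicate part is not any single estimate but organizing the case distinction so that a \emph{single} power $2^{2\ell(d+2-(2H_0+H)+\varepsilon)}2^{-m\varepsilon}$ works across all configurations of $\ell,m,n$. When $\ell\leq m$ the interpolation above does the job directly. When $\ell > m$ (so the rescaled mollifier arguments are large), one should instead use Assumption $(\rho)(iii)$ \eqref{asympt-rho}: $|\cf\rho_n(\la,\xi)| \lesssim \prod |\cdot|^{-\tau_i}$ with a suitable choice of $\tau$-exponents, which at the cost of a bit more decay in $(\la,\xi)$ still leaves a convergent integral against $\cn_{H_0,\bh}$ and produces a gain $2^{-m\cdot(\text{something positive})}$ that dominates $2^{-m\varepsilon}$; the factor $2^{2\ell\varepsilon}$ is then generous. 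One must check that the exponents $\tau_i\in[0,1]$ can be chosen so that $d+2 - (2H_0+H) + \varepsilon$ is not worsened — this uses precisely the strengthened condition \eqref{strengthened 4.17}, $2H_0+H > d+\tfrac12$, which guarantees enough room in the exponents. Apart from this bookkeeping, every step is a routine Fourier/scaling computation combined with the integrability furnished by Lemma \ref{lem:fouri-psi-prop} and Assumption $(\rho)$.
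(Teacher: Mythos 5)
Your proposal follows the paper's route exactly — reduce to a Fourier-side integral via the covariance formula \eqref{cova-xi-n-xi-n}, rescale by $2^{-\ell}$ to extract the prefactor $2^{2\ell(d+2-(2H_0+H))}$, and use boundedness plus Lipschitz continuity of $\cf\rho$ (Assumption $(\rho)(i)$--$(ii)$) to produce the $2^{-m\varepsilon}$ gain, together with (a variant of) Lemma \ref{lem:fouri-psi-prop} for the residual integral — and you are in fact filling in precisely what the paper relegates to ``similar estimates'' for $m>0$. One small correction: the ``Obstacle and bookkeeping'' paragraph is unnecessary and slightly misleading, since for $\ell>m$ the elementary bound $|\cf\rho_n-\cf\rho_m|\lesssim 1$ combined with $2^{(2\ell-m)\varepsilon}\geq 1$ already closes the estimate (as you correctly note in the first paragraph), so no recourse to Assumption $(\rho)(iii)$ with nonzero $\tau$-exponents is required, and the strengthened condition $2H_0+H>d+\tfrac12$ plays no role in this Proposition at all.
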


\begin{proof}
We have by definition
\begin{align*}
&\mathbb{E}\big[ \langle \dw^{n},\psi^\ell_{s,x} \rangle^{2}\big]
=\int_{\R^{d+1}\times \R^{d+1}} dt dy d\tti d\yti \, \psi^\ell_{s,x}(t,y) \psi^\ell_{s,x}(\tti,\yti)\E\big[\dw^n(t,y) \dw^n(\tilde{t},\tilde{y})\big].
\end{align*}
Therefore using the covariance formula \eqref{cova-xi-n-xi-n} together with the definition \eqref{def:Fourier transform} of Fourier transform, we get
\begin{align}
\mathbb{E}\big[ \langle \dw^{n},\psi^\ell_{s,x} \rangle^{2}\big]
&=c_{H_0,\bh}^2 \int_{\R^{d+1}\times \R^{d+1}} dt dy d\tti d\yti 
\, \psi^\ell_{s,x}(t,y) \psi^\ell_{s,x}(\tti,\yti)\nonumber \\
&\hspace{1.5in}\times\int_{\R^{d+1}} d\la d\xi\, |\cf \rho_n(\la,\xi)|^2 \cn_{H_0,\bh}(\la,\xi) 
e^{\imath (\la (t-\tti)+  \xi \cdot (y-\yti))}\nonumber\\
&=c_{H_0,\bh}^2 \int_{\R^{d+1}} d\la d\xi\, |\cf \rho_n(\la,\xi)|^2 \big| \cf\psi^\ell_{s,x}(\la,\xi)\big|^2\cn_{H_0,\bh}(\la,\xi).\label{eq:test W^n1}
\end{align}
We now recall that $\rho_n$ is a rescaled version of the mollifier given by \eqref{def:approximate noise}, and we have also set $\psi^n_{s,x}=\mathcal{S}_{s,x}^{2^{-n}}\psi$ in the \lastchange{right-hand side} of \eqref{bound-GRR-gene}. Hence we obtain
\begin{align}
&\mathbb{E}\big[ \langle \dw^{n},\psi^\ell_{s,x} \rangle^{2}\big]\nonumber\\
&=c_{H_0,\bh}^2 \int_{\R^{d+1}} d\la d\xi\, |\cf\rho(2^{-2n}\la,2^{-n}\xi)|^2 \big| \cf \psi(2^{-2\ell}\la,2^{-\ell}\xi)\big|^2\cn_{H_0,\bh}(\la,\xi).\label{eq:test W^n2}
\end{align}
We now perform the elementary change of variables $\lambda:=2^{-2l}\lambda$ and $\xi:=2^{-l}\xi$, which yields
\begin{align}
&\mathbb{E}\big[ \langle \dw^{n},\psi^\ell_{s,x} \rangle^{2}\big]\nonumber\\
&=c_{H_0,\bh}^2  2^{2\ell (d+2-(2H_0+H))}\int_{\R^{d+1}} d\la d\xi\, |\cf\rho(2^{-2(n-\ell)}\la,2^{-(n-\ell)}\xi)|^2 \big| \cf \psi(\la,\xi)\big|^2\cn_{H_0,\bh}(\la,\xi).\label{eq:test W^n3}
\end{align}
Thanks to \eqref{asympt-rho}, applied with $\tau_0=\cdots=\tau_d=0$, the Fourier transform of $\rho$ is uniformly bounded. Hence we end up with
\begin{align}
\mathbb{E}\big[ \langle \dw^{n},\psi^\ell_{s,x} \rangle^{2}\big]
\lesssim  2^{2\ell (d+2-(2H_0+H))}\int_{\R^{d+1}} d\la d\xi\, \big| \cf \psi(\la,\xi)\big|^2\cn_{H_0,\bh}(\la,\xi)  \ .\label{proof-mom-1}
\end{align}
According to Lemma \ref{lem:fouri-psi-prop} the latter integral is finite, which gives our claim \eqref{eq:dw^n-dw^m} for $m=0$. The general case $m\geq 0$ can then be derived along similar estimates, invoking the fact that $\mathcal{F}\rho$ is a Lipschitz function (see Assumption ($\rho$)).
\end{proof}

\subsection{Moment estimate for the second component}

Let us start with two useful estimates on the Fourier transforms of the (fixed) components $(K,R)$ in the decomposition of the heat kernel (see  relation \eqref{regu-psi-illu}).
\begin{lemma}\label{lem:estim-fourier-k} 
Let $K$ be the localized heat kernel of Definition \ref{defi:loc-heat-ker}. 
For all fixed $a_0,a_1,\ldots,a_d \in [0,1]$ such that $\sum_{i=0}^d a_i<1$, one has, for every $(\la,\xi)\in \R^{d+1}$,
$$|\cf K(\la,\xi)|\lesssim |\la|^{-a_0} \prod_{i=1}^d |\xi_{i}|^{-2a_i} \ .$$
\end{lemma}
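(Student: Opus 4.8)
The goal is to bound $|\mathcal{F}K(\la,\xi)|$ by $|\la|^{-a_0}\prod_{i=1}^d |\xi_i|^{-2a_i}$ whenever $a_0,\ldots,a_d\in[0,1]$ and $\sum_{i=0}^d a_i < 1$. The natural starting point is the dyadic decomposition of $K$ from Definition \ref{defi:loc-heat-ker}(iii), namely $K=\sum_{\ell\geq 0} 2^{-2\ell}\,\mathcal{S}_{0,0}^{2^{-\ell}}K_0$, where $K_0$ is smooth with support in $[-1,1]^{d+1}$. By linearity and the scaling behaviour of the Fourier transform under the parabolic rescaling $\mathcal{S}_{0,0}^{\delta}$, a direct computation gives $\mathcal{F}\big(\mathcal{S}_{0,0}^{2^{-\ell}}K_0\big)(\la,\xi) = \mathcal{F}K_0(2^{-2\ell}\la, 2^{-\ell}\xi)$ (the $\delta^{-(d+2)}$ prefactor in \eqref{scaling-operator} is exactly cancelled by the Jacobian of the change of variables in the integral defining $\mathcal{F}$). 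Hence
\begin{equation*}
\mathcal{F}K(\la,\xi) = \sum_{\ell\geq 0} 2^{-2\ell}\, \mathcal{F}K_0\big(2^{-2\ell}\la, 2^{-\ell}\xi\big).
\end{equation*}

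\textbf{Key steps.} First I would record the two basic bounds on $\mathcal{F}K_0$ that follow from $K_0\in\cac^\infty$ with compact support: it is bounded, $|\mathcal{F}K_0(\eta,\zeta)|\lesssim 1$, and it has rapid decay, so in particular for any multi-exponent we may extract negative powers of the frequency variables. Concretely, since $\partial_{x_{i_1}}\cdots\partial_{x_{i_k}}K_0$ is integrable, multiplying $\mathcal{F}K_0$ by the corresponding monomial in the frequencies stays bounded; interpolating (or just using $|z|^{b}\lesssim 1+|z|^{\lceil b\rceil}$ termwise) yields, for any $b_0,b_1,\ldots,b_d\geq 0$,
\begin{equation*}
|\mathcal{F}K_0(\eta,\zeta)| \lesssim_{b} \big(1+|\eta|\big)^{-b_0}\prod_{i=1}^d\big(1+|\zeta_i|\big)^{-b_i}.
\end{equation*}
Then, plugging the scaled argument $(2^{-2\ell}\la,2^{-\ell}\xi)$ into this and summing the geometric-type series in $\ell$ is the heart of the matter. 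The clean way is: for each fixed $(\la,\xi)$, split the sum over $\ell$ at the scale $\ell_*$ where $2^{-2\ell}|\la| + 2^{-\ell}\max_i|\xi_i| \approx 1$; in the "low-$\ell$" regime use the decay bound on $\mathcal{F}K_0$ with exponents slightly larger than the $a_i$ to beat the $2^{-2\ell}$ weight, and in the "high-$\ell$" regime use $|\mathcal{F}K_0|\lesssim 1$ so that the $\sum 2^{-2\ell}$ converges. Matching the two regimes at $\ell_*$ produces exactly a factor behaving like $|\la|^{-a_0}\prod_i|\xi_i|^{-2a_i}$, where the constraint $\sum_i a_i<1$ is precisely what guarantees that the weight $2^{-2\ell}$ dominates the growth $2^{2\ell(a_0+\cdots+a_d)}$ coming from the extracted negative powers, so the low-$\ell$ series converges. (Alternatively, one can bound each term by $2^{-2\ell}\cdot 2^{2\ell a_0}|\la|^{-a_0}\prod_i 2^{2\ell a_i}|\xi_i|^{-2a_i}$ using $|\mathcal{F}K_0(\eta,\zeta)|\lesssim |\eta|^{-a_0}\prod_i|\zeta_i|^{-2a_i}$ valid for $|\eta|,|\zeta_i|\gtrsim 1$, handle the finitely many small-scale terms by the uniform bound, and sum the geometric series $\sum_\ell 2^{2\ell(\sum a_i -1)}<\infty$; this is slightly more direct and I would likely present it this way.)

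\textbf{Main obstacle.} There is no deep difficulty here — it is a routine Paley–Wiener / dyadic-summation argument. The one point requiring a little care is the bookkeeping when splitting into negative powers of each individual coordinate $\xi_i$: one must be sure that the decay bound on $\mathcal{F}K_0$ can be localized coordinate by coordinate (which it can, since $K_0$ is smooth in every variable separately and compactly supported), and that extracting $|\xi_i|^{-2a_i}$ with the factor $2$ — rather than $|\xi_i|^{-a_i}$ — is consistent with the parabolic scaling $\xi\mapsto 2^{-\ell}\xi$, which it is because $2^{-\ell}$ appears to the first power in the spatial variables while $\la$ is rescaled by $2^{-2\ell}$. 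Finally one checks the endpoint bookkeeping: the hypothesis is the strict inequality $\sum a_i<1$, which is exactly what is needed for the geometric series to converge and is the analogue of the corresponding condition in \cite[Lemma 2.2]{De16}.
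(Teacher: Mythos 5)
Your proposal is correct and essentially mirrors the paper's proof: both start from the dyadic expansion $\cf K(\la,\xi)=\sum_{\ell\geq 0}2^{-2\ell}\,\cf K_0(2^{-2\ell}\la,2^{-\ell}\xi)$, extract the appropriate negative powers of the frequency variables from the Schwartz decay of $\cf K_0$, and sum the geometric series $\sum_\ell 2^{-2\ell(1-\sum_i a_i)}$, with $\sum_i a_i<1$ giving convergence. The only cosmetic difference is at the point you flag as needing "a little care": rather than invoke a joint multi-variable decay bound on $\cf K_0$ (and then handling small frequencies separately), the paper first records the single-variable bounds $|\cf K_0(\eta,\zeta)|\lesssim|\eta|^{-\tau_0}$ and $|\cf K_0(\eta,\zeta)|\lesssim|\zeta_i|^{-\tau_i}$ (each valid for all frequencies, the small-frequency case being absorbed by boundedness of $\cf K_0$), and then uses the elementary observation $|\cf K_0|\lesssim|\cf K_0|^{a_0}\cdots|\cf K_0|^{a_d}$ to combine them, which sidesteps the coordinate-by-coordinate bookkeeping you mention. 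Both routes are valid and elementary; the "alternative" you say you would likely present is in substance the paper's argument.
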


\begin{proof}
Using the expansion of $K$ in (\ref{decompo-k}) and recalling the definition \eqref{scaling-operator} of $\mathcal{S}_{s,x}^\delta$, we can first write
\begin{align}\label{proof midstep FK}\cf K(\la,\xi)=\sum_{\ell \geq 0} 2^{-2\ell} \cf K_0(2^{-2\ell}\la,2^{-\ell}\xi) \, .\end{align}
Then, since $K_0$ is a smooth compactly-supported function, one has $|\cf K_0(\la,\xi)|\lesssim |\la|^{-\tau_0}$ and $|\cf K_0(\la,\xi)|\lesssim |\xi_i|^{-\tau_i}$ for all $\tau_0,\tau_1,\ldots,\tau_d \geq 0$ and $(\la,\xi)\in \R^{d+1}$. Plugging this information into~\eqref{proof midstep FK}, we get
\begin{align*}
\big| \cf K(\la,\xi) \big| &\leq \sum_{\ell \geq 0} 2^{-2\ell} \big|\cf K_0(2^{-2\ell}\la,2^{-\ell}\xi)\big|^{a_0}\cdots \big|\cf K_0(2^{-2\ell}\la,2^{-\ell}\xi)\big|^{a_d}\\
&\lesssim |\la|^{-a_0} \prod_{i=1}^d |\xi_i|^{-2a_i} \sum_{\ell \geq 0} 2^{-2\ell (1-(a_0+a_1+\cdots+a_d))} \lesssim |\la|^{-a_0} \prod_{i=1}^d |\xi_i|^{-2a_i}  \, ,
\end{align*}
which finishes our proof.
\end{proof}

We now turn to a bound concerning the function $R$ involved in the decomposition \eqref{decompo-k}.
\begin{lemma}\label{lem:estim-fouri-r}
Let $R$ be the remainder term associated with the localized heat kernel $K$ (along Definition \ref{defi:loc-heat-ker}). Then, for all fixed $a_0,a_1,\ldots,a_d \geq 0$ such that $\sum_{i=0}^d a_i> 1$, one has, for every $(\la,\xi)\in \R^{d+1}$,
\begin{equation}\label{estim-fouri-r}
|\cf R(\la,\xi)|\lesssim |\la|^{-a_0} \prod_{i=1}^d |\xi_i|^{-2a_i} \ .
\end{equation}
As a consequence, if $H_0\in (0,1),\bh=(H_1,\ldots,H_d)\in (0,1)^{d}$ are such that $2H_0+H < d+1$, the following relation holds true for the function $\cn_{H_0,\bh}$ defined by \eqref{notation:cn-h}:
\begin{equation}\label{integr-r}
\int_{\R^{d+1}} d\la d\xi\, \cn_{H_0,\bh}(\la,\xi) \big| \cf R(\la,\xi)\big| < \infty .
\end{equation}
\end{lemma}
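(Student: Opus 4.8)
\textbf{Plan of proof for Lemma \ref{lem:estim-fouri-r}.}

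The idea is to first establish \eqref{estim-fouri-r}, and then deduce \eqref{integr-r} as a routine integrability check near the origin in Fourier space. For the bound on $\cf R$, I would exploit the second identity in \eqref{decompo-k}, namely $R(s,x)=\sum_{\ell<0} 2^{-2\ell}(\cs_{0,0}^{2^{-\ell}}K_0)(s,x)$, which is a \emph{finite-scale} expansion in the sense that only the coarse scales $\ell<0$ appear. Taking the space-time Fourier transform and using the scaling \eqref{scaling-operator} exactly as in the proof of Lemma \ref{lem:estim-fourier-k}, one gets
$$\cf R(\la,\xi)=\sum_{\ell<0} 2^{-2\ell}\, \cf K_0(2^{-2\ell}\la,2^{-\ell}\xi)\, .$$
Since $K_0$ is smooth and compactly supported, $\cf K_0$ is a Schwartz function, so for any $a_0,\ldots,a_d\geq 0$ one has $|\cf K_0(\la,\xi)|\lesssim |\la|^{-a_0}\prod_{i=1}^d |\xi_i|^{-2a_i}$ (interpolating the various one-dimensional Schwartz decay bounds, exactly as in Lemma \ref{lem:estim-fourier-k}). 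Plugging this in,
$$|\cf R(\la,\xi)|\lesssim |\la|^{-a_0}\prod_{i=1}^d |\xi_i|^{-2a_i}\sum_{\ell<0} 2^{-2\ell(1-(a_0+a_1+\cdots+a_d))}\, ,$$
and here the condition $\sum_{i=0}^d a_i>1$ makes the exponent $-2\ell(1-\sum a_i)$ \emph{positive} for $\ell<0$ (note $-2\ell>0$ and $1-\sum a_i<0$, so the product is positive... wait), so the geometric series $\sum_{\ell<0} 2^{-2\ell(1-\sum a_i)}$ converges; this is the mirror image of the computation in Lemma \ref{lem:estim-fourier-k}, where instead $\sum a_i<1$ was needed for convergence over $\ell\geq 0$. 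This yields \eqref{estim-fouri-r}. (One should also note the trivial complementary bound $|\cf R(\la,\xi)|\lesssim 1$ coming from $R\in L^1$, which is needed to control the region where $|\la|$ or some $|\xi_i|$ is small.)

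For \eqref{integr-r}, recall that $\cn_{H_0,\bh}(\la,\xi)=|\la|^{1-2H_0}\prod_i |\xi_i|^{1-2H_i}$, so the integrand is $|\la|^{1-2H_0}\prod_i |\xi_i|^{1-2H_i}|\cf R(\la,\xi)|$. Away from the coordinate hyperplanes and away from infinity there is nothing to check; the two issues are integrability near $\{\la=0\}\cup\bigcup_i\{\xi_i=0\}$ and integrability at infinity. Near the origin in any coordinate, using the trivial bound $|\cf R|\lesssim 1$ and the fact that $1-2H_i>-1$ (since $H_i<1$) and $1-2H_0>-1$, each factor $|\la|^{1-2H_0}$, $|\xi_i|^{1-2H_i}$ is locally integrable, so there is no singularity problem. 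At infinity, I would choose the exponents $a_0,\ldots,a_d$ in \eqref{estim-fouri-r} carefully: we want $1-2H_i-2a_i<-1$ for each $i$, i.e. $a_i>1-H_i$, and $1-2H_0-a_0<-1$, i.e. $a_0>1-H_0$; summing, we need $\sum_{i=0}^d a_i$ to exceed $(1-H_0)+\sum_{i=1}^d(1-H_i)=(d+1)-(H_0+H)$. Since the hypothesis of the lemma only grants us $\sum a_i>1$, this is compatible precisely when $(d+1)-(H_0+H)<$ something $\le$ ... — here is where the assumption $2H_0+H<d+1$ enters: it guarantees $(d+1)-(H_0+H)<H_0<1$, so one can indeed pick $a_i$ slightly above $1-H_i$ for $i\geq 1$ and $a_0$ slightly above $1-H_0$ with total sum arbitrarily close to $(d+1)-(H_0+H)<1$, hence strictly between that value and $1$ — wait, we need $\sum a_i>1$, but the natural total is $<1$; the resolution is that we only need \emph{each} coordinate decay to be $<-1$ \emph{in a neighbourhood of infinity in that coordinate}, and we may apply \eqref{estim-fouri-r} with different admissible choices of $(a_0,\ldots,a_d)$ on different regions (splitting $\R^{d+1}$ according to which of $|\la|,|\xi_1|,\ldots,|\xi_d|$ is largest), so that in each region it suffices to have fast decay in the single dominant variable while the others are bounded below — there $\sum a_i>1$ with a single large $a_j$ is harmless. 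Combining all regions gives \eqref{integr-r}.

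\textbf{Main obstacle.} The computational content is light; the delicate point is the bookkeeping in the last paragraph — namely splitting Fourier space into the regions $\{|\la|\geq 1\}$, $\{|\xi_i|\geq 1\}$ and a bounded core, and on each unbounded region selecting an admissible exponent vector $(a_0,\ldots,a_d)$ (with $\sum_{i=0}^d a_i>1$, each $a_i\in[0,1]$) for which \eqref{estim-fouri-r} produces enough decay in the variable that is sent to infinity, while using only the trivial bounds $|\xi_j|^{1-2H_j}\lesssim |\xi_j|^{1-2H_j}$ together with $H_j<1$ to handle the remaining variables near their own origins. This is exactly the place where the strict inequality $2H_0+H<d+1$ is used, and matching the constraints $a_i>1-H_i$ against $\sum a_i>1$ and $a_i\le 1$ is the only step requiring genuine care.
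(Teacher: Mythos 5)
Your derivation of the pointwise bound \eqref{estim-fouri-r} is correct and is essentially the paper's: expand $\cf R$ via the $\ell<0$ part of \eqref{decompo-k}, interpolate the Schwartz decay of $\cf K_0$, and note that $\sum_i a_i>1$ makes the geometric series over coarse scales converge (despite the sign confusion you flag with ``wait'', the exponent is negative and the series converges). This is indeed the mirror image of Lemma~\ref{lem:estim-fourier-k}.

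There is, however, a genuine gap in your proof of \eqref{integr-r}. You invoke a ``trivial complementary bound $|\cf R(\la,\xi)|\lesssim 1$ coming from $R\in L^1$'' to dismiss the region near the origin. This is false: $R$ is \emph{not} integrable on $\R^{d+1}$. Outside the compact support of $K$, and for $s>0$, one has $R(s,x)=p_s(x)$, and $\int_0^\infty\int_{\R^d}p_s(x)\,dx\,ds=\int_0^\infty 1\,ds=\infty$; equivalently, on the scale expansion $\cf R(\la,\xi)=\sum_{\ell>0}2^{2\ell}\cf K_0(2^{2\ell}\la,2^\ell\xi)$ evaluated at $(\la,\xi)=(0,0)$, all factors collapse and the series $\sum_{\ell>0}2^{2\ell}$ diverges. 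Concretely, by \eqref{eq:fourier-p} one has $\cf p(\la,\xi)=\bigl(\tfrac{|\xi|^2}{2}+\imath\la\bigr)^{-1}$, which blows up as $(\la,\xi)\to(0,0)$, and since $\cf K$ is bounded (Lemma~\ref{lem:estim-fourier-k} with $a_i=0$), $\cf R=\cf p-\cf K$ inherits exactly this singularity. Near the origin your integrand is therefore of order $\cn_{H_0,\bh}(\la,\xi)\cdot\bigl(\la^2+\tfrac{|\xi|^4}{4}\bigr)^{-1/2}$, and local integrability is exactly where the hypothesis $2H_0+H<d+1$ is needed: the paper handles it by splitting $R=p-K$ on a compact region $\cd_\scal$, using the bound on $\int_{\cd_\scal}\cn_{H_0,\bh}|\cf p|$ already established in the proof of Lemma~\ref{lem:cstt-renorm} (change of variable $\tilde\xi_i=\xi_i^2$, polar coordinates, $\int_0^1 dr/r^{2H_0+H-d}<\infty$), together with the boundedness of $\cf K$ and local integrability of $\cn_{H_0,\bh}$. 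Your claim that ``there is no singularity problem'' near the origin is the heart of the error; you have also reversed an inequality ($2H_0+H<d+1$ gives $(d+1)-(H_0+H)>H_0$, not $<$), a symptom of having misplaced where this hypothesis actually enters. Your region-splitting idea far from the origin is on the right track and is needed to make rigorous the part of the argument the paper labels ``easily derived from \eqref{estim-fouri-r}'', but it cannot fix the missing estimate at the origin.
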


\begin{proof}
Using the expansion of $R$ in \eqref{decompo-k} and relation \eqref{scaling-operator} for $\mathcal{S}_{s,x}^\delta$, we can first write
$$\cf R(\la,\xi)=\sum_{\ell >0} 2^{2\ell} \cf K_0(2^{2\ell}\la,2^\ell\xi) \, .$$
Then, similarly to what we did in the proof of Lemma \ref{lem:estim-fourier-k}, we invoke the bound $|\cf K_0(\la,\xi)|\lesssim |\la|^{-\tau_0}$ and $|\cf K_0(\la,\xi)|\lesssim |\xi_i|^{-\tau_i}$ for all $\tau_0,\tau_1,\ldots,\tau_d \geq 0$ and $(\la,\xi)\in \R^{d+1}$.  We deduce that for any $a_0,\dots,a_d\geq0$ such that $\sum_{i=0}^d a_i> 1$ we have
\begin{align*}
\big| \cf R(\la,\xi) \big| &\leq \sum_{\ell >0} 2^{2\ell} \big|\cf K_0(2^{2\ell}\la,2^\ell\xi)\big|^{1/(d+1)}\cdots \big|\cf K_0(2^{2\ell}\la,2^\ell\xi)\big|^{1/(d+1)}\\
&\lesssim |\la|^{-a_0} \prod_{i=1}^d |\xi_i|^{-2a_i} \sum_{\ell >0} 2^{2\ell (1-(a_0+a_1+\cdots+a_d))} \lesssim |\la|^{-a_0} \prod_{i=1}^d |\xi_i|^{-2a_i}  \, .
\end{align*}
This proves the assertion \eqref{estim-fouri-r}.

We now turn to a bound on the integral introduced in \eqref{integr-r}. To this aim, we split the integral according to the region $\cd_\scal$ defined below by  \eqref{def: calD_s} and  we recall that $R=p-K$, which yields
\begin{multline}
\int_{\R^{d+1}} d\la d\xi\, \cn_{H_0,\bh}(\la,\xi) \big| \cf R(\la,\xi)\big|
\lesssim \bigg[\int_{\cd_\scal} d\la d\xi\, \cn_{H_0,\bh}(\la,\xi)\big| \cf p(\la,\xi)\big| \\
+\int_{\cd_\scal} d\la d\xi\, \cn_{H_0,\bh}(\la,\xi) \big| \cf K(\la,\xi)\big|\bigg]
+\int_{\R^{d+1}\backslash \cd_\scal} d\la d\xi\, \cn_{H_0,\bh}(\la,\xi) \big| \cf R(\la,\xi)\big|\, .\label{boun-aur-detail}
\end{multline}
Next, taking into account expression \eqref{eq:fourier-p} for the Fourier transform of $p$, the integral 
$$\int_{\cd_\scal} d\la d\xi\, \cn_{H_0,\bh}(\la,\xi) | \cf p(\la,\xi)|$$
in~\eqref{boun-aur-detail} is (essentially) the same as in the right-hand side of~\eqref{eq:cj-c-n-border}. We have already shown that this integral is finite in the proof of Lemma \ref{lem:cstt-renorm}. In addition, one can bound $| \cf K(\la,\xi)|$ by a constant thanks to Lemma~\ref{lem:estim-fourier-k}, in order to get
\begin{equation*}
\int_{\cd_\scal}  \cn_{H_0,\bh}(\la,\xi) \big| \cf K(\la,\xi) \big|\, d\la d\xi
\lesssim
\int_{\cd_\scal} d\la d\xi\, \cn_{H_0,\bh}(\la,\xi) 
<\infty .
\end{equation*}
Eventually, the finiteness of $\int_{\R^{d+1}\backslash \cd_\scal} d\la d\xi\, \cn_{H_0,\bh}(\la,\xi) | \cf R(\la,\xi)|$ can be easily derived from relation~\eqref{estim-fouri-r}. Plugging the information  above into~\eqref{boun-aur-detail}, this completes the proof of our claim~\eqref{integr-r}.
\end{proof}

As we will see in the sequel, the renormalization procedure for $\bw^{\mathbf{2},n}$ is based on the following decomposition.

\begin{lemma}\label{lem:decompo-esp-xi-2} 
Let $\bw^{\mathbf{2},n}$ be the increment given by~\eqref{notation-xi-2}, and recall that the renormalization constant $\mathfrak{c}^{(n)}_{\rho,H_0,\bh}$ is defined by \eqref{renormal}. Then for all $(s,x),(t,y)\in \R^{d+1}$ and $n\geq 1$, one has the decomposition
\begin{align}\label{decomp EW^2n_sx}\mathbb{E}\big[ \bw^{\mathbf{2},n}_{s,x}(t,y)\big]=\mathfrak{c}^{(n)}_{\rho,H_0,\bh}+\ce^n_{s,x}(t,y) \, ,\end{align}
for some function $\ce^n_{s,x}$ such that, for every $\varepsilon\in (0,1)$, $\ell\ge 0$ and  $\psi\in\cb_\scal^\ell$ we have
\begin{equation}\label{estim-mathcal-e}
\big|\langle \ce^n_{s,x},\psi^\ell_{s,x}\rangle \big| \lesssim  2^{2\ell (1+d-(2H_0+H)+\varepsilon)} \, .
\end{equation}
Moreover, in relation \eqref{estim-mathcal-e} the proportional constant does not depend on $n,\ell,s,x$.
\end{lemma}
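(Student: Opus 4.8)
The plan is to compute $\mathbb{E}[\bw^{\mathbf{2},n}_{s,x}(t,y)]$ directly from the definition \eqref{notation-xi-2}, namely $\bw^{\mathbf{2},n}_{s,x}(t,y)=[(K\ast \dw^n)(t,y)-(K\ast \dw^n)(s,x)]\cdot \dw^n(t,y)$, using Wick's formula (or just the Gaussian covariance structure). This gives
\[
\mathbb{E}[\bw^{\mathbf{2},n}_{s,x}(t,y)]=\mathbb{E}[(K\ast \dw^n)(t,y)\,\dw^n(t,y)]-\mathbb{E}[(K\ast \dw^n)(s,x)\,\dw^n(t,y)].
\]
For the first term, I would insert the covariance formula obtained by combining \eqref{cova-xi-n-xi-n} with the convolution by $K$ (so that an extra factor $\cf K(\la,\xi)$ appears, evaluated at the argument that is \emph{not} conjugated); since $K$ is real this equals $c_{H_0,\bh}^2\int |\cf\rho_n|^2\,\cf K\,\cn_{H_0,\bh}\,d\la d\xi$, independent of $(t,y)$. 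Then I would write $\cf K=\cf p-\cf R$, and observe that the $\cf p$ contribution, after the scaling $\rho_n(\cdot)=2^{n(d+2)}\rho(2^{2n}\cdot,2^n\cdot)$ and the change of variables $(\la,\xi)\mapsto(2^{2n}\la,2^n\xi)$, is exactly $c_{H_0,\bh}^2\,2^{2n(d+1-(2H_0+H))}\int|\cf\rho|^2\,\cf p\,\cn_{H_0,\bh}$ when $2H_0+H<d+1$ (resp. the truncated integral over $|\la|+|\xi|^2\ge 2^{-2n}$ when $2H_0+H=d+1$, once one notices $\cf\rho_n(\la,\xi)=\cf\rho(2^{-2n}\la,2^{-n}\xi)$ and rescales accordingly), i.e. precisely $\mathfrak{c}^{(n)}_{\rho,H_0,\bh}$ as given by \eqref{renormal}. (For the boundary case the truncation region comes from the decomposition \eqref{decompo-k}, which splits $\cf p$ into $\cf K$ supported at small scales and $\cf R$ at large scales; I would be careful here that the truncation is the one stated in \eqref{renormal}.) The leftover $-\cf R$ piece is bounded uniformly in $n$ by Lemma~\ref{lem:estim-fouri-r} — indeed $\int \cn_{H_0,\bh}|\cf R|<\infty$ by \eqref{integr-r}, and after rescaling the $\rho_n$ factor stays bounded by \eqref{asympt-rho} with all $\tau_i=0$ — so it contributes a bounded constant which I fold into $\ce^n_{s,x}$.

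So I set $\ce^n_{s,x}(t,y):=\big(\mathbb{E}[(K\ast\dw^n)(t,y)\dw^n(t,y)]-\mathfrak{c}^{(n)}_{\rho,H_0,\bh}\big)-\mathbb{E}[(K\ast\dw^n)(s,x)\dw^n(t,y)]$, where the first bracket is the bounded $\cf R$-type remainder described above. It remains to prove the estimate \eqref{estim-mathcal-e} for $\langle \ce^n_{s,x},\psi^\ell_{s,x}\rangle$. The first part of $\ce^n_{s,x}$ is a constant (in $(t,y)$) of modulus $\lesssim 1$, so testing it against $\psi^\ell_{s,x}$ gives a contribution bounded by $\|\psi^\ell_{s,x}\|_{L^1}\lesssim 1$, which is dominated by $2^{2\ell(1+d-(2H_0+H)+\ep)}$ since the exponent is positive for $\ell\ge0$ when $2H_0+H\le d+1$; the strengthened condition $d+\frac12<2H_0+H$ is not even needed here. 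For the second, genuinely $(t,y)$-dependent term, I would write, using \eqref{cova-xi-n-xi-n} convolved with $K$ once,
\begin{align*}
\langle \mathbb{E}[(K\ast\dw^n)(s,x)\dw^n(\cdot)],\psi^\ell_{s,x}\rangle
&=c_{H_0,\bh}^2\int d\la d\xi\,|\cf\rho_n(\la,\xi)|^2\,\overline{\cf K(\la,\xi)}\,\cn_{H_0,\bh}(\la,\xi)\\
&\qquad\times e^{-\imath(\la s+\xi\cdot x)}\,\overline{\cf\psi^\ell_{s,x}(\la,\xi)}\,,
\end{align*}
take absolute values, note $|\cf\rho_n|\lesssim 1$, and insert the bound on $\cf K$ from Lemma~\ref{lem:estim-fourier-k}, i.e. $|\cf K(\la,\xi)|\lesssim|\la|^{-a_0}\prod|\xi_i|^{-2a_i}$ for any $a_i\ge0$ with $\sum a_i<1$. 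After the change of variables $(\la,\xi)\mapsto(2^{2\ell}\la,2^\ell\xi)$ (which extracts the factor $2^{2\ell(d+2)}$ from $\cf\psi^\ell_{s,x}$ and $2^{-2\ell(2H_0+H-2)}$ from $\cn_{H_0,\bh}$, and $2^{-2\ell(a_0+\sum a_i)}\cdot(\text{neg. powers})$ from the $\cf K$ bound) one is left with $2^{2\ell(d+2-(2H_0+H)-\sum_i a_i)}\int \cn_{H_0,\bh}(\la,\xi)\,|\la|^{-a_0}\prod|\xi_i|^{-2a_i}|\cf\psi(\la,\xi)|\,d\la d\xi$, and the remaining integral is finite by Lemma~\ref{lem:fouri-psi-prop} (with a slightly worse power of $\cn$, still integrable against $\cf\psi$ thanks to its fast decay) provided $\sum a_i$ is chosen close enough to $1$; taking $\sum_i a_i = 1-\ep'$ yields the exponent $2\ell(1+d-(2H_0+H)+\ep'/1)$, matching \eqref{estim-mathcal-e} up to renaming $\ep$.

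The main obstacle I anticipate is bookkeeping rather than conceptual: making sure the constant produced by the $\cf p$ part is \emph{exactly} $\mathfrak{c}^{(n)}_{\rho,H_0,\bh}$ in both regimes — in particular that in the boundary case $2H_0+H=d+1$ the truncation $\{|\la|+|\xi|^2\ge 2^{-2n}\}$ arises correctly from the $K$/$R$ split of the heat kernel and the rescaling, and that the finiteness claims for the truncated integral \eqref{eq:cj-c-n-border} from Lemma~\ref{lem:cstt-renorm} are being invoked where they apply (the full integral $\cj_{\rho,H_0,\bh}$ being used only when $2H_0+H<d+1$). A secondary point requiring care is the choice of the exponents $a_0,\ldots,a_d$ in Lemma~\ref{lem:estim-fourier-k}: they must simultaneously keep $\sum a_i<1$ (so the bound on $\cf K$ holds) and keep the perturbed function $\cn_{H_0,\bh}(\la,\xi)|\la|^{-a_0}\prod|\xi_i|^{-2a_i}$ locally integrable near the origin; this is possible precisely because each individual exponent $2H_i-1+2a_i$ and $2H_0-1+a_0$ can be kept $<1$, which is where one uses that the $H_i$ and $H_0$ are genuinely in $(0,1)$ — no sharper condition on $2H_0+H$ is needed for this lemma, the sharp threshold $d+\frac12<2H_0+H$ being relevant only downstream for the variance/second-moment estimates of $\bw^{\mathbf{2},n}$, not for the mean computed here.
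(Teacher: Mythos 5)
Your plan is correct for $2H_0+H < d+1$ and matches the paper's argument: split $\ce^n_{s,x}$ as a $(t,y)$-constant remainder plus the $(t,y)$-dependent cross term $\mathbb{E}[(K\ast\dw^n)(s,x)\dw^n(t,y)]$, bound the latter by the rescaling argument you describe (feeding Lemma~\ref{lem:estim-fourier-k} into Lemma~\ref{lem:fouri-psi-prop} with $\sum a_i=1-\varepsilon$), and bound the constant remainder using the $\cf R$-integrability from Lemma~\ref{lem:estim-fouri-r}. Your exponent bookkeeping checks out, and you are also right that only $2H_0+H\le d+1$, not $2H_0+H>d+\tfrac12$, is needed here.

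The genuine gap is in the border case $2H_0+H=d+1$. You propose to write $\cf K=\cf p-\cf R$, identify ``the $\cf p$ contribution'' with $\mathfrak{c}^{(n)}_{\rho,H_0,\bh}$, and treat the $\cf R$ piece as a bounded error. But when $2H_0+H=d+1$ the integral $\int_{\R^{d+1}}|\cf\rho_n|^2\cf p\,\cn_{H_0,\bh}\,d\la d\xi$ diverges at the origin --- this divergence is exactly why $\mathfrak{c}^{(n)}$ has to be defined by a truncated integral in \eqref{renormal}, and why it grows like $n$ (Proposition~\ref{prop:asymp-renorm-cstt}) --- so ``the $\cf p$ contribution'' is not a finite quantity you can isolate and equate with $\mathfrak{c}^{(n)}$. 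Your guess that the cutoff $\{|\la|+|\xi|^2\geq 2^{-2n}\}$ ``arises from the $K$/$R$ split of the heat kernel'' is also off: the split \eqref{decompo-k} is a physical-space dyadic decomposition of $p$ into $K$ (small scales) and $R$ (large scales), with no sharp Fourier support, and the truncation in \eqref{renormal} is simply a defined choice, not derived from it. The correct manipulation, which the paper uses, is: first perform the (scale-invariant, since $2H_0+H=d+1$) change of variables $(\la,\xi)\mapsto(2^{2n}\la,2^n\xi)$ to rewrite $\mathfrak{c}^{(n)}_{\rho,H_0,\bh}=c_{H_0,\bh}^2\int_{\{|\la|+|\xi|^2\geq 1\}}|\cf\rho_n|^2\cn_{H_0,\bh}\cf p\,d\la d\xi$, and then split the \emph{domain} rather than the kernel, invoking $\cf p=\cf K+\cf R$ only where the relevant piece is integrable:
\begin{equation*}
\mathbb{E}\big[(K\ast\dw^n)(t,y)\dw^n(t,y)\big]-\mathfrak{c}^{(n)}_{\rho,H_0,\bh}
= c_{H_0,\bh}^2\int_{|\la|+|\xi|^2\leq 1}|\cf\rho_n|^2\cn_{H_0,\bh}\,\cf K
- c_{H_0,\bh}^2\int_{|\la|+|\xi|^2\geq 1}|\cf\rho_n|^2\cn_{H_0,\bh}\,\cf R \, .
\end{equation*}
Each of these two integrals is finite and uniformly bounded in $n$ (the first because $|\cf K|\lesssim 1$ by Lemma~\ref{lem:estim-fourier-k} with $a_i=0$ and $\cn_{H_0,\bh}$ is locally integrable; the second by Lemma~\ref{lem:estim-fouri-r}). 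As written, your plan would be subtracting two infinite quantities in the border case, so this is not ``bookkeeping'' --- it is a different decomposition you need.
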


\begin{proof}With the definition \eqref{notation-xi-2} of $\bw^{\mathbf{2},n}$ in mind, we can obviously write 
$$\mathbb{E}\big[ \bw^{\mathbf{2},n}_{s,x}(t,y)\big]=\mathfrak{c}^{(n)}_{\rho,H_0,\bh}+\ce^n_{s,x}(t,y),
$$ 
as stated in \eqref{decomp EW^2n_sx},  where we have simply set
\begin{align}\label{eq:Epsi^n_sx}
\ce^n_{s,x}(t,y):=\Big\{\mathbb{E}\big[(K\ast \dw^n)(t,y) \dw^n(t,y) \big]-\mathfrak{c}^{(n)}_{\rho,H_0,\bh} \Big\}-\mathbb{E}\big[(K\ast \dw^n)(s,x) \dw^n(t,y) \big] \, .
\end{align}
We now analyze the terms
\begin{align}\label{eq:Q^n(s,x;t,y)}
Q^n(s,x;t,y)=\mathbb{E}\big[(K\ast \dw^n)(s,x) \dw^n(t,y) \big]
\end{align}
in the \lastchange{right-hand side} of \eqref{eq:Epsi^n_sx}. To this aim, we resort to  a slight variation on~\eqref{cova-xi-n-xi-n} and~\eqref{cova-i-n-i-n}, which enables to write that for all $(s,x),(t,y)\in \R^{d+1}$ 
\begin{align*}
&Q^n(s,x;t,y)
=c_{H_0,\bh}^2 \int_{\R^{d+1}}  |\cf\rho_n(\la,\xi)|^2 \cn_{H_0,\bh}(\la,\xi) \cf K(\la,\xi)e^{\imath (\la (t-s)+  \xi \cdot (y-x))} \, d\la d\xi\,.
\end{align*}

Based on this expression, and along the same lines as for \eqref{eq:test W^n1}, one gets on the one hand
\begin{align*}
&\int_{\R^{d+1}} dtdy\, Q^n(s,x;t,y)\psi^\ell_{s,x}(t,y) \\
&=c_{H_0,\bh}^2 \int_{\R^{d+1}} d\la d\xi\, |\cf\rho_n(\la,\xi)|^2 \cn_{H_0,\bh}(\la,\xi) \cf K(\la,\xi)\cf \psi^{\ell}_{0,0}(\lambda,\xi).\end{align*}
Hence owing to the fact that $\psi^\ell_{0,0}=\mathcal{S}^{2^{-\ell}}_{0,0}\psi$ and performing the change of variable $\lambda:=2^{-2\ell}\lambda, \xi=2^{-\ell}\xi$, we get
\begin{align*}
&\bigg| \int_{\R^{d+1}} dtdy\, Q^n(s,x;t,y)\psi^\ell_{s,x}(t,y) \bigg| \\
&=c_{H_0,\bh}^2\, 2^{2\ell (d+2-(2H_0+H))} \bigg|\int_{\R^{d+1}} d\la d\xi\, |\cf\rho_n(2^{2\ell}\la,2^\ell\xi)|^2 \cn_{H_0,\bh}(\la,\xi)\cf K(2^{2\ell}\la,2^\ell\xi)\cf\psi(\la,\xi) \bigg| \, .
\end{align*}
At this point, observe that due to the assumption $2H_0+H \leq d+1$, we can pick $a_0,a_1,\ldots,a_d$ in $[0,1]$ such that $\sum_{i=0}^d a_i=1-\varepsilon$, $2H_0+a_0-1<1$ and $2H_i+2a_i-1<1$ for $i=1,\ldots,d$. We can now apply Lemma \ref{lem:estim-fourier-k} with this set of parameters to deduce that
\begin{align}
&\bigg| \int_{\R^{d+1}} dtdy\, Q^n(s,x;t,y)\psi^\ell_{s,x}(t,y) \bigg|\nonumber \\
&\lesssim2^{2\ell (d+1-(2H_0+H)+\varepsilon)} \int_{\R^{d+1}} d\la d\xi\, \frac{1}{|\la|^{2H_0+a_0-1}} \prod_{i=1}^d \frac{1}{|\xi_i|^{2H_i+2a_i-1}} \big| \cf\psi(\la,\xi) \big| \, .\label{first-bo}
\end{align}
Since $2H_0+a_0<2$ and $2H_i+2a_i<2$ for $i=1,\ldots,d$, we can finally appeal to Lemma \ref{lem:fouri-psi-prop} to assert that the latter integral is finite, which gives the desired bound for the second term in the right-hand side of \eqref{eq:Epsi^n_sx}.

\smallskip

Then, for the treatment of the difference into brackets in \eqref{eq:Epsi^n_sx}, let us separate the two cases $2H_0+H<d+1$ and $2H_0+H=d+1$.

\smallskip

\noindent
\underline{First case: $2H_0+H<d+1$.} In this situation, going back to the definition \eqref{eq:cj-c-n} of $\cj_{\rho,H_0,\bh}$, observe that the renormalization constant can also be expressed as
\begin{equation*}
\mathfrak{c}^{(n)}_{\rho,H_0,\bh}=c_{H_0,\bh}^2 \int_{\R^{d+1}}  |\cf\rho_n(\la,\xi)|^2  \cn_{H_0,\bh}(\la,\xi)\cf p(\la,\xi)  \, d\la d\xi\, ,
\end{equation*}
and accordingly
\begin{align*}
&Q^n(t,y;t,y)-\mathfrak{c}^{(n)}_{\rho,H_0,\bh}=-c_{H_0,\bh}^2 \int_{\R^{d+1}}  |\cf\rho_n(\la,\xi)|^2 \cn_{H_0,\bh}(\la,\xi) \cf R(\la,\xi) \, d\la d\xi\,,
\end{align*}
where $R$ stands for the remainder term in the decomposition of Definition \ref{defi:loc-heat-ker}, item $(i)$. Invoking the inequality $|\cf\rho_n(\la,\xi)| \lesssim 1$ and the result of \eqref{integr-r}, we get
\begin{equation}\label{second-bo}
\Big|Q^n(t,y;t,y)-\mathfrak{c}^{(n)}_{\rho,H_0,\bh}\Big| \lesssim 1 \le 2^{2\ell (1+d-(2H_0+H)+\varepsilon)} \, ,
\end{equation}
where the last inequality naturally stems from the fact that $2H_0+H<d+1$.

\smallskip

\noindent
\underline{Second case: $2H_0+H=d+1$.} Let us recall that in this situation,
\begin{equation*}
\mathfrak{c}^{(n)}_{\rho,H_0,\bh}=c_{H_0,\bh}^2 \int_{|\la|+|\xi|^2\geq 2^{-2n}}  |\mathcal F{\rho}(\la,\xi)|^2  \mathcal F{p}(\la,\xi) \cn_{H_0,\bh}(\la,\xi)\, d\la d\xi \, .
\end{equation*}
In fact, using the relation $2H_0+H=d+1$, it is not hard to check that we can recast the above quantity as
\begin{align*}
&\mathfrak{c}^{(n)}_{\rho,H_0,\bh}=c_{H_0,\bh}^2 \int_{|\la|+|\xi|^2\geq 1}  |\mathcal F{\rho_n}(\la,\xi)|^2  \mathcal F{p}(\la,\xi) \cn_{H_0,\bh}(\la,\xi)\, d\la d\xi \, ,
\end{align*}
and accordingly
\begin{align*}
&Q^n(t,y;t,y)-\mathfrak{c}^{(n)}_{\rho,H_0,\bh}=c_{H_0,\bh}^2 \int_{|\la|+|\xi|^2\leq 1}  |\cf\rho_n(\la,\xi)|^2 \cn_{H_0,\bh}(\la,\xi) \cf K(\la,\xi) \, d\la d\xi\\
&\hspace{5cm}-c_{H_0,\bh}^2 \int_{|\la|+|\xi|^2\geq 1}  |\cf\rho_n(\la,\xi)|^2 \cn_{H_0,\bh}(\la,\xi) \cf R(\la,\xi) \, d\la d\xi\,.
\end{align*}
Using the results of Lemma \ref{lem:estim-fourier-k} and Lemma \ref{lem:estim-fouri-r}, as well as the uniform estimate $|\cf\rho_n(\la,\xi)| \lesssim 1$, we thus get
\begin{align}
& \Big|Q^n(t,y;t,y)-\mathfrak{c}^{(n)}_{\rho,H_0,\bh}\Big|\nonumber\\
&\lesssim \int_{|\la|+|\xi|^2\leq 1}  \cn_{H_0,\bh}(\la,\xi) \, d\la d\xi+ \int_{|\la|+|\xi|^2\geq 1}   \cn_{H_0,\bh}(\la,\xi) |\cf R(\la,\xi)| \, d\la d\xi  \ \lesssim \ 1
\ \le \ 2^{2\ell \varepsilon} \, ,\label{third-bo}
\end{align}
which corresponds to the desired bound in this case.

We can now conclude our proof: combining \eqref{first-bo}, \eqref{second-bo} and \eqref{third-bo} with \eqref{eq:Epsi^n_sx}, we immediately obtain \eqref{estim-mathcal-e}.

\end{proof}

We turn to a bound on the variance of the renormalized $K$-rough path $\widehat{\bw}^n$.
\begin{proposition}\label{prop:estim-mom-second} Let $\widehat{\bw}^n$ be the renormalized $K$-rough path defined by \eqref{hat W^n}, where we recall that $\pmb{W}^{n}:=(\dw^{n},\mathbf{W}^{\mathbf{2},n})$ and $\mathbf{W}^{\mathbf{2},n}$ is introduced in~\eqref{notation-xi-2}. Then for all $\ell\geq 0$, $n\ge m \ge 0$, $\psi\in \cb^{2(d+1)}_\scal$, $(s,x)\in \R^{d+1}$ and $\varepsilon \in (0,1)$, it holds that
\begin{align}\label{eq:hatW^n-hatW^m}\mathbb{E}\big[ |\langle \widehat{\bw}^{\mathbf{2},n}_{s,x}-\widehat{\bw}^{\mathbf{2},m}_{s,x},\psi^\ell_{s,x} \rangle |^{2}\big] \lesssim 2^{4\ell (1+d-(2H_0+H)+\varepsilon)}2^{-m\varepsilon} \, ,\end{align}
where the proportional constant in \eqref{eq:hatW^n-hatW^m} does not depend on $n,m,\ell,s,x$.
\end{proposition}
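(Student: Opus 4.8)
The plan is to establish \eqref{eq:hatW^n-hatW^m} via a Wick expansion of the variance followed by a Fourier estimate of the resulting frequency integrals, in the Gaussian-analytic spirit of \cite{De16}. First I would introduce $\mathbb{V}^n_{s,x}(t,y):=\bw^{\mathbf{2},n}_{s,x}(t,y)-\mathbb{E}[\bw^{\mathbf{2},n}_{s,x}(t,y)]$, so that Lemma~\ref{lem:decompo-esp-xi-2} provides
\[
\widehat{\bw}^{\mathbf{2},n}_{s,x}-\widehat{\bw}^{\mathbf{2},m}_{s,x}=\big(\mathbb{V}^n_{s,x}-\mathbb{V}^m_{s,x}\big)+\big(\ce^n_{s,x}-\ce^m_{s,x}\big) .
\]
The deterministic remainder is controlled by a mild refinement of Lemma~\ref{lem:decompo-esp-xi-2}: by \eqref{eq:Epsi^n_sx}, the Fourier integrand for $\ce^n_{s,x}-\ce^m_{s,x}$ carries a factor of the type $|\cf\rho_n|^2-|\cf\rho_m|^2$, and interpolating between the uniform bound $|\cf\rho|\lesssim 1$ and the Lipschitz bound on $\cf\rho$ (Assumption~$(\rho)$) --- exactly as at the end of the proof of Proposition~\ref{prop:estim-mom-first} --- gives $|\langle\ce^n_{s,x}-\ce^m_{s,x},\psi^\ell_{s,x}\rangle|^2\lesssim 2^{4\ell(1+d-(2H_0+H)+\varepsilon)}2^{-m\varepsilon}$. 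It thus remains to estimate $\mathbb{E}[|\langle\mathbb{V}^n_{s,x}-\mathbb{V}^m_{s,x},\psi^\ell_{s,x}\rangle|^2]$.

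Recalling that $\bw^{\mathbf{2},n}_{s,x}(t,y)=\ci^n_{s,x}(t,y)\,\dw^n(t,y)$ is a product of the centered Gaussian variables $\ci^n_{s,x}(t,y)$ and $\dw^n(t,y)$, and writing $\ci^n_{s,x}\dw^n-\ci^m_{s,x}\dw^m=\ci^n_{s,x}(\dw^n-\dw^m)+(\ci^n_{s,x}-\ci^m_{s,x})\dw^m$, I would split $\mathbb{V}^n_{s,x}-\mathbb{V}^m_{s,x}$ into two centered terms of the form $G_{s,x}H_{s,x}-\mathbb{E}[G_{s,x}H_{s,x}]$ in which one of the two Gaussian factors is a \enquote{difference} ($\dw^n-\dw^m$ or $\ci^n_{s,x}-\ci^m_{s,x}$). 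For such a term, Wick's theorem expresses $\mathbb{E}[\langle G_{s,x}H_{s,x}-\mathbb{E}[G_{s,x}H_{s,x}],\psi^\ell_{s,x}\rangle^2]$ as the integral over $(t,y,\tti,\yti)$ of $\psi^\ell_{s,x}(t,y)\psi^\ell_{s,x}(\tti,\yti)$ against
\[
\mathbb{E}[G_{s,x}(t,y)G_{s,x}(\tti,\yti)]\,\mathbb{E}[H_{s,x}(t,y)H_{s,x}(\tti,\yti)]+\mathbb{E}[G_{s,x}(t,y)H_{s,x}(\tti,\yti)]\,\mathbb{E}[H_{s,x}(t,y)G_{s,x}(\tti,\yti)] .
\]
Into these covariances I insert the Fourier formulas \eqref{cova-xi-n-xi-n}, \eqref{cova-i-n-i-n} and the mixed covariance formula from the proof of Lemma~\ref{lem:decompo-esp-xi-2}; each covariance carrying a $\ci$-factor then produces, owing to the $K$-Chen structure, increments of the form $e^{\imath(\la t+\xi\cdot y)}-e^{\imath(\la s+\xi\cdot x)}$ together with $\cf K$ factors, while each covariance carrying a \enquote{difference} factor produces in addition a factor of the type $|\cf\rho_n|^2-|\cf\rho_m|^2$.

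Integrating the two exponentials in $(t,y)$ and $(\tti,\yti)$ against $\psi^\ell_{s,x}$ couples the two frequency variables $(\la,\xi)$ and $(\mu,\eta)$ through copies of $\cf\psi^\ell_{s,x}$ evaluated at linear combinations of them; the changes of variables $\la\mapsto 2^{2\ell}\la$, $\xi\mapsto 2^{\ell}\xi$ (and likewise for $(\mu,\eta)$) then extract a power $2^{4\ell(d+2-(2H_0+H))}$ from the Jacobians and the two copies of $\cn_{H_0,\bh}$, $\cf\rho$ becoming uniformly bounded. Applying Lemma~\ref{lem:estim-fourier-k} to the $\cf K$ factors, with exponents $a_0,\ldots,a_d\in[0,1]$ whose sum is chosen close to (but below) $1$, each $\cf K$ factor contributes a scaling gain essentially $2^{-2\ell}$, which brings the extracted power down to $2^{4\ell(1+d-(2H_0+H)+\varepsilon)}$ while replacing $\cf K(\la,\xi)$ by $|\la|^{-a_0}\prod_i|\xi_i|^{-2a_i}$. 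One is then left with a coupled four-fold frequency integral of products of $\cn_{H_0,\bh}$, negative powers of the frequencies, and (shifted) copies of $\cf\psi$, whose finiteness --- uniformly in $n,m,\ell,s,x$ --- follows from the decay of $\cf\psi$ at infinity (guaranteed by $\psi\in\cb_\scal^{2(d+1)}$), the bound $\min(1,|\la|+|\xi|)$ on the rescaled increments near the origin, Lemma~\ref{lem:fouri-psi-prop}, and the integrability statements around Lemma~\ref{lem:estim-fouri-r}. Finally, in the two \enquote{difference} terms, interpolating the factor $|\cf\rho_n|^2-|\cf\rho_m|^2$ between its Lipschitz bound and the uniform bound yields the prefactor $2^{-m\varepsilon}$, up to a harmless $2^{\ell\varepsilon}$ factor absorbed into the $\varepsilon$-loss and extra positive powers of the frequencies absorbed by $\cf\psi$. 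Combining all this with the reduction step gives \eqref{eq:hatW^n-hatW^m}.

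The hard part, I expect, is the estimate of this coupled four-fold frequency integral: one must distribute the $\cf K$-regularizations among the $d+1$ coordinate directions and interpolate, with a suitable parameter, between the near-origin improvement afforded by the increments and the decay at infinity afforded by $\cf\psi$, in such a way that simultaneously the scaling gain is essentially $2^{-2\ell}$ per $\cf K$ factor and the residual integral converges uniformly in the base point $(s,x)$. This is precisely where the strengthened hypothesis $d+\frac12<2H_0+H$ of Theorem~\ref{theo:conv-k-rp} --- stronger than the condition $2H_0+H\leq d+1$ that already suffices for Lemma~\ref{lem:decompo-esp-xi-2} --- is needed, the endpoint $2H_0+H=d+1$ being the tight case.
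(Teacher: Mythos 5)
Your overall architecture — subtract the expectation via Lemma~\ref{lem:decompo-esp-xi-2}, apply Wick's formula, pass to Fourier variables via \eqref{cova-xi-n-xi-n}--\eqref{cova-i-n-i-n}, rescale to pull out a $2^{4\ell(d+2-(2H_0+H))}$ factor, and then use Lemma~\ref{lem:estim-fourier-k} to convert $|\cf K|^2$ into a $2^{-4\ell(1-\varepsilon)}$ gain at the price of negative powers $|\la|^{-a_0}\prod|\xi_i|^{-2a_i}$ — is the same route as the paper's. The only structural difference is that you keep both indices $n,m$ throughout (via the telescoping $\ci^n\dw^n-\ci^m\dw^m=\ci^n(\dw^n-\dw^m)+(\ci^n-\ci^m)\dw^m$), whereas the paper first treats $m=0$ and then invokes the Lipschitz property of $\cf\rho$ to obtain the $2^{-m\varepsilon}$ factor; both routes work and yield the same cost, so this is a matter of bookkeeping rather than of substance.

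However, there is a genuine gap exactly at the point you flag as "the hard part." The estimate of the coupled four-fold frequency integral is the technical core of the proposition, and what you offer in its place is not an argument. You invoke "the bound $\min(1,|\la|+|\xi|)$ on the rescaled increments near the origin," "Lemma~\ref{lem:fouri-psi-prop}," and "the integrability statements around Lemma~\ref{lem:estim-fouri-r}," but none of these is actually the right tool here: Lemma~\ref{lem:fouri-psi-prop} controls a single uncoupled integral against $\cn_{H_0,\bh}$ and is used in the first-order estimate and in the expectation bound, while Lemma~\ref{lem:estim-fouri-r} concerns $\cf R$ and does not appear in this step at all. What is actually required is a coordinate-wise control of the increments $|\cf\psi(\la+\lati,\xi+\xiti)-\cf\psi(\lati,\xiti)|$ — the paper introduces the auxiliary functions $\mathcal{T}^{(i)}$ and $\mathcal{Q}^{(i)}$ in \eqref{defi-t-0}--\eqref{defi-q-i}, proves the precise integrability criteria of Lemma~\ref{lem:t-i-q-i-improv}, splits the integration domain according to sign patterns $\mathbf{s}\in\{-,+\}^{d+1}$, and verifies through a system of inequalities in the exponents $(a_0,\ldots,a_d)$ (with the constraint $\sum a_i=1-\varepsilon$) that the constraints can be met. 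That last verification is itself nontrivial: it uses the derived relations $H_0>\tfrac14$, $2H_0+H_1>\tfrac32$, $2H_0+H_1+H_2>\tfrac52$, $H_i>\tfrac12$ for $i\geq 3$ (after reordering $H_1\leq\cdots\leq H_d$), and a case-by-case analysis (the table in the paper) to show that the lower bound $A^i_{\mathbf{s}}$ stays below $4+2(2H_0+H)$ whenever $2H_0+H>d+\tfrac12$. Without this machinery the claim $\tilde{\cs}^\ell\lesssim 2^{-4\ell(1-\varepsilon)}$ is unsubstantiated, and this is precisely where the hypothesis $2H_0+H>d+\tfrac12$ enters — you correctly anticipate this, but anticipating is not proving. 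In short: you have the right skeleton and you have identified the right pressure point, but the load-bearing estimate is missing.
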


\begin{proof}

For the sake of conciseness, we will only focus on the case $m=0$, i.e. we will show the uniform estimate
$$\mathbb{E}\big[ |\langle \widehat{\bw}^{\mathbf{2},n}_{s,x},\psi^\ell_{s,x} \rangle |^{2}\big] \lesssim 2^{4\ell (1+d-(2H_0+H)+\varepsilon)} \, .$$
The proof in the general case $m\geq 0$ could in fact be obtained through elementary adaptations of the subsequent estimates, using the fact that $\cf \rho$ is Lipschitz (see e.g. the arguments of~\cite{De16} for more details on the transition from $m=0$ to $m\geq 0$).

Observe first that due to Wick's formula for products of Gaussian random variables (and using the notation of \eqref{notation-xi-2}), we can write
\begin{align*}
\mathbb{E}\big[|\langle \bw^{\mathbf{2},n}_{s,x},\psi^\ell_{s,x} \rangle |^{2}\big]&=\iint_{\R^{d+1}\times \R^{d+1}} dt dy d\tti d\yti \, \psi^\ell_{s,x}(t,y) \psi^\ell_{s,x}(\tti,\yti)\mathbb{E}\big[\ci^n_{s,x}(t,y) \dw^{n}(t,y)\ci^n_{s,x}(\tti,\yti) \dw^{n}(\tti,\yti) \big]\\
&=\big(\big\langle \mathbb{E}\big[\bw^{\mathbf{2},n}_{s,x}\big],\psi^\ell_{s,x} \big\rangle  \big)^2+\mathcal{U}^{\ell,n}_{s,x}+\mathcal{V}^{\ell,n}_{s,x} \, ,
\end{align*}
where we have set
$$\mathcal{U}^{\ell,n}_{s,x}:=\iint_{\R^{d+1}\times \R^{d+1}} dt dy d\tti d\yti \, \psi^\ell_{s,x}(t,y) \psi^\ell_{s,x}(\tti,\yti)\mathbb{E}\big[\ci^n_{s,x}(t,y) \ci^n_{s,x}(\tti,\yti)  \big]\mathbb{E}\big[ \dw^{n}(t,y)\dw^{n}(\tti,\yti) \big]$$
and
$$\mathcal{V}^{\ell,n}_{s,x}:=\iint_{\R^{d+1}\times \R^{d+1}} dt dy d\tti d\yti \, \psi^\ell_{s,x}(t,y) \psi^\ell_{s,x}(\tti,\yti)\mathbb{E}\big[\ci^n_{s,x}(t,y)  \dw^{n}(\tti,\yti) \big]\mathbb{E}\big[\dw^{n}(t,y)\ci^n_{s,x}(\tti,\yti) \big] \ .$$
Based on this decomposition, we get that
\begin{align*}
&\mathbb{E}\big[ |\langle \widehat{\bw}^{\mathbf{2},n}_{s,x},\psi^\ell_{s,x} \rangle |^{2}\big]=\mathbb{E}\big[ |\langle \bw^{\mathbf{2},n}_{s,x}-\mathfrak{c}^{(n)}_{\rho,H_0,\bh},\psi^\ell_{s,x} \rangle |^{2}\big]\\
&=\big(\big\langle \mathbb{E}\big[\bw^{\mathbf{2},n}_{s,x}\big],\psi^\ell_{s,x} \big\rangle  \big)^2+\mathcal{U}^{\ell,n}_{s,x}+\mathcal{V}^{\ell,n}_{s,x} -2 \langle \mathbb{E}\big[\bw^{\mathbf{2},n}_{s,x}\big],\psi^\ell_{s,x} \big\rangle \langle \mathfrak{c}^{(n)}_{\rho,H_0,\bh},\psi^\ell_{s,x}\rangle+\langle \mathfrak{c}^{(n)}_{\rho,H_0,\bh},\psi^\ell_{s,x}\rangle^2\\
&=\big(\big\langle \mathbb{E}\big[\bw^{\mathbf{2},n}_{s,x}\big]-\mathfrak{c}^{(n)}_{\rho,H_0,\bh},\psi^\ell_{s,x} \big\rangle  \big)^2+\mathcal{U}^{\ell,n}_{s,x}+\mathcal{V}^{\ell,n}_{s,x}\\
&=\big(\big\langle \mathcal{E}^n_{s,x},\psi^\ell_{s,x} \big\rangle  \big)^2+\mathcal{U}^{\ell,n}_{s,x}+\mathcal{V}^{\ell,n}_{s,x} \, ,
\end{align*}
where we have used Lemma \ref{lem:decompo-esp-xi-2} (and the notation therein) to derive the last identity.
Owing to~\eqref{estim-mathcal-e}, our claim \eqref{eq:hatW^n-hatW^m} is thus reduced to check that
\begin{align}\label{eq:bound on U and V}
\big|\mathcal{U}^{\ell,n}_{s,x} \big| \lesssim 2^{4\ell (1+d-(2H_0+H)+\varepsilon)}\quad \textrm{and}\quad  \big|\mathcal{V}^{\ell,n}_{s,x} \big| \lesssim 2^{4\ell (1+d-(2H_0+H)+\varepsilon)}.
\end{align} 
The remainder of the proof is devoted to prove \eqref{eq:bound on U and V}.

\smallskip

To this end, recall that $\ci^n_{s,x}$ is defined by \eqref{notationci-n}, which, together with relation \eqref{cova-i-n-i-n}, yields
\begin{align*}
&\mathbb{E}\big[\ci^n_{s,x}(t,y) \ci^n_{s,x}(\tti,\yti)  \big]=c_{H_0,\bh}^2 \int_{\R^{d+1}} d\la d\xi\, |\cf\rho_n(\la,\xi)|^2 |\cf K(\la,\xi)|^2\cn_{H_0,\bh}(\la,\xi) \\
&\hspace{6cm}\big[ e^{\imath (\la (t-\tti)+  \xi \cdot (y-\yti))}-e^{\imath (\la (t-s)+  \xi \cdot (y-x))}-e^{\imath (\la (s-\tti)+  \xi \cdot (x-\yti))}+ 1\big]
\end{align*}
Combining this expression with formula \eqref{cova-xi-n-xi-n} for $\mathbb{E}\big[ \dw^{n}(t,y)\dw^{n}(\tti,\yti) \big]$, we easily deduce that 
\begin{align}
&\mathcal{U}^{\ell,n}_{s,x}=c_{H_0,\bh}^2 \iint_{\R^{d+1}\times \R^{d+1}} d\la d\xi d\lati d\xiti \,  |\cf\rho_n(\la,\xi)|^2 |\cf\rho_n(\lati,\xiti)|^2|\cf K(\la,\xi)|^2 \cn_{H_0,\bh}(\la,\xi) \cn_{H_0,\bh}(\lati,\xiti)\nonumber\\
&\hspace{3cm}\Big[\big|\cf\psi^\ell_{s,x}(\la+\lati,\xi+\xiti)\big|^2 -\overline{\cf\psi^\ell_{s,x}(\la+\lati,\xi+\xiti)}\cf\psi_{s,x}^\ell(\lati,\xiti) e^{-\imath(\la s+\xi\cdot x)}\nonumber\\
&\hspace{5cm}-\cf\psi_{s,x}^\ell(\la+\lati,\xi+\xiti) \overline{\cf\psi^\ell_{s,x}(\lati,\xiti)}e^{\imath(\la s+\xi\cdot x)}+\big|\cf\psi^\ell_{s,x}(\la,\xi)\big|^2\Big]\nonumber\\
&=c_{H_0,\bh}^2 \iint_{\R^{d+1}\times \R^{d+1}} d\la d\xi d\lati d\xiti \,  |\cf\rho_n(\la,\xi)|^2 |\cf\rho_n(\lati,\xiti)|^2|\cf K(\la,\xi)|^2 \cn_{H_0,\bh}(\la,\xi) \cn_{H_0,\bh}(\lati,\xiti)\nonumber\\
&\hspace{3cm}\Big[\big|\cf\psi^\ell_{0,0}(\la+\lati,\xi+\xiti)\big|^2 -\overline{\cf\psi^\ell_{0,0}(\la+\lati,\xi+\xiti)}\cf\psi_{0,0}^\ell(\lati,\xiti) \nonumber\\
&\hspace{6cm}-\cf\psi_{0,0}^\ell(\la+\lati,\xi+\xiti) \overline{\cf\psi^\ell_{0,0}(\lati,\xiti)}+\big|\cf\psi^\ell_{0,0}(\la,\xi)\big|^2\Big]\nonumber\\
&=c_{H_0,\bh}^2 \iint_{\R^{d+1}\times \R^{d+1}} d\la d\xi d\lati d\xiti \,  |\cf\rho_n(\la,\xi)|^2 |\cf\rho_n(\lati,\xiti)|^2|\cf K(\la,\xi)|^2 \cn_{H_0,\bh}(\la,\xi) \cn_{H_0,\bh}(\lati,\xiti)\nonumber\\
&\hspace{8cm}\big| \cf\psi^\ell_{0,0}(\la+\lati,\xi+\xiti)-\cf\psi^\ell_{0,0}(\lati,\xiti)\big|^2 \, .\label{boun-u-ell-det}
\end{align}
Along similar arguments, we obtain first
\begin{align*}
&\mathcal{V}^{\ell,n}_{s,x}=c_{H_0,\bh}^2\\
& \iint_{\R^{d+1}\times \R^{d+1}} d\la d\xi d\lati d\xiti \,  |\cf\rho_n(\la,\xi)|^2 |\cf\rho_n(\lati,\xiti)|^2\cf K(\la,\xi)\overline{\cf K(\lati,\xiti)} \cn_{H_0,\bh}(\la,\xi) \cn_{H_0,\bh}(\lati,\xiti)\\
&\hspace{3cm}\overline{\big[ \cf\psi^\ell_{0,0}(\la+\lati,\xi+\xiti)-\cf\psi^\ell_{0,0}(\la,\xi)\big]}\big[ \cf\psi^\ell_{0,0}(\la+\lati,\xi+\xiti)-\cf\psi^\ell_{0,0}(\lati,\xiti)\big] \, ,
\end{align*}
and we can now apply Cauchy-Schwarz inequality to derive the estimate
\begin{align}
&\big|\mathcal{V}^{\ell,n}_{s,x}\big|\leq c_{H_0,\bh}^2\iint_{\R^{d+1}\times \R^{d+1}} d\la d\xi d\lati d\xiti \,  |\cf\rho_n(\la,\xi)|^2 |\cf\rho_n(\lati,\xiti)|^2|\cf K(\la,\xi)|^2 \cn_{H_0,\bh}(\la,\xi) \cn_{H_0,\bh}(\lati,\xiti)\nonumber\\
&\hspace{8cm}\big| \cf\psi^\ell_{0,0}(\la+\lati,\xi+\xiti)-\cf\psi^\ell_{0,0}(\lati,\xiti)\big|^2 \, .\label{boun-v-ell-det}
\end{align}
Combining \eqref{boun-u-ell-det}-\eqref{boun-v-ell-det} with the uniform bound $|\cf\rho_n(\la,\xi)| \lesssim 1$, we have thus shown that uniformly in $(s,x)\in\R^{d+1}$ and $n\geq1$ the following holds true:
\begin{align}\label{eq:uniform bound of UV by calS}
|\mathcal{U}^{\ell,n}_{s,x}|+|\mathcal{V}^{\ell,n}_{s,x}|\lesssim\cs^{\ell},
\end{align}
where the quantity $\cs^\ell$ is given by 
\begin{align*}
\cs^{\ell}:=c_{H_0,\bh}^4 \iint_{\R^{d+1}\times \R^{d+1}} d\la d\xi d\lati d\xiti \, |\cf K(\la,\xi)|^2 &\cn_{H_0,\bh}(\la,\xi) \cn_{H_0,\bh}(\lati,\xiti) \\ 
&\times\big| \cf\psi^\ell_{0,0}(\la+\lati,\xi+\xiti)-\cf\psi^\ell_{0,0}(\lati,\xiti)\big|^2.
\end{align*}
Moreover, an easy scaling argument performed on $\psi^{\ell}_{0,0}=\cs^{2^\ell}_{0,0}\psi$ shows that
\begin{align*}
&\cs^{\ell}
=c_{H_0,\bh}^4 \, 2^{4\ell(d+2-(2H_0+H))}\tilde{\cs}^\ell,
\end{align*}
where
\begin{align}
\tilde{\cs}^\ell=
\iint_{\R^{d+1}\times \R^{d+1}} d\la d\xi d\lati d\xiti \, |\cf K(2^{2\ell}\la,2^\ell \xi)|^2 &\cn_{H_0,\bh}(\la,\xi) \cn_{H_0,\bh}(\lati,\xiti) \nonumber\\
&\times\big| \cf\psi(\la+\lati,\xi+\xiti)-\cf\psi(\lati,\xiti)\big|^2 \, .\label{cs-ell}
\end{align}
Plugging this information into \eqref{eq:uniform bound of UV by calS} and then \eqref{eq:bound on U and V} we are now reduced to show that for any $\epsilon\in(0,1)$ we have
\begin{align}\label{eq:bound for tilde sc^l}
\tilde{\cs}^\ell\lesssim 2^{-4\ell(1-\varepsilon)}. 
\end{align}
We shall prove assertion \eqref{eq:bound for tilde sc^l} in the next subsection.
\end{proof}

\subsection{Proof of \eqref{eq:bound for tilde sc^l}}

Let us start by highlighting a few inequalities satisfied by $(H_0,\bh)$, that will serve us later in the proof. First, observe that due to \eqref{strengthened 4.17} and $H\le d$, one has $d+\frac12<2H_0+H<2H_0+d$, and so one has necessarily
\begin{equation}\label{impli-h-0}
H_0>\frac14.
\end{equation}
Likewise, it holds that $d+\frac12<2H_0+H<2H_0+H_1+(d-1)$, and so
\begin{equation}\label{impli-h-1}
2H_0+H_1>\frac32,
\end{equation}
while for $d\geq 2$, one has $d+\frac12<2H_0+H_1+H_2+(d-2)$, and so
\begin{equation}\label{impli-h-2}
2H_0+H_1+H_2>\frac52.
\end{equation}
Besides, for obvious symmetry reasons in both expression \eqref{cs-ell} of $\tilde{\cs}^\ell$ and condition \eqref{strengthened 4.17} on $H$, \emph{we can and will assume in the sequel that $H_1\leq H_2 \leq \ldots \leq H_d$}. As a consequence of this assumption, we get that for $d\geq 3$ and $i\geq 3$, $d+\frac12<2H_0+H<2H_0+H_1+H_2+H_3+(d-3)<2+3H_i+(d-3)$, and therefore
\begin{equation}\label{observ-h-i}
H_i >\frac12 \quad \text{for any} \ i \geq 3 \, .
\end{equation}

With these conditions in hand, let us go back to our main purpose, that is proving the estimate~\eqref{eq:bound for tilde sc^l}. With \eqref{cs-ell} in mind, our bound on $\tilde{\cs}^\ell$ relies on a proper control of the difference 
$$\big| \cf\psi(\la+\lati,\xi+\xiti)-\cf\psi(\lati,\xiti)\big|.$$
To this aim, let us introduce some additional notation. Namely for $\lambda,\tilde{\lambda}\in\R$ we set 
\begin{equation}\label{defi-t-0}
\mathcal{T}^{(0)}(\la):=\bigg(\int_{\R^{d+1}} dt dy \, |(\partial_{t x_1 \cdots x_d} \psi)(t,y) | \bigg| \int_0^t du \, e^{-\imath \la u}  \bigg|^{d+1}\bigg)^{1/(d+1)} \, ,
\end{equation}
\begin{equation}\label{defi-q-0}
\mathcal{Q}^{(0)}(\la,\tilde{\la}):=\bigg(\int_{\R^{d+1}} dt dy \, |(\partial_{tx_1\cdots x_d} \psi)(t,y) | \bigg| \int_0^t du\int_0^u dv \, e^{-\imath \tilde{\la} u} e^{-\imath \la v} \bigg|^{d+1}\bigg)^{1/(d+1)} \, ,
\end{equation}
and for $i=1,\ldots,d$,
\begin{equation}\label{defi-t-i}
\mathcal{T}^{(i)}(\la):=\bigg(\int_{\R^{d+1}} dt dy \, |(\partial_{tx_1\cdots x_d} \psi)(t,y) | \bigg| \int_0^{y_i} dz_i \, e^{-\imath \la z_i}  \bigg|^{d+1}\bigg)^{1/(d+1)} \, ,
\end{equation}
\begin{equation}\label{defi-q-i}
\mathcal{Q}^{(i)}(\la,\tilde{\la}):=\bigg(\int_{\R^{d+1}} dt dy \, |(\partial_{tx_1 \cdots x_d} \psi)(t,y) | \bigg| \int_0^{y_i} dz_i\int_0^{z_i} dw_i \, e^{-\imath \tilde{\la} z_i} e^{-\imath \la w_i} \bigg|^{d+1}\bigg)^{1/(d+1)} \, .
\end{equation}
Using this notation, some elementary algebraic manipulations reveal that for all $\lambda,\tilde{\lambda}\in \R$ and $\xi,\tilde{\xi}$ in $\R^{d}$, we have
\begin{equation}\label{bou-var-psi-1-1}
\big| \cf\psi(\la+\lati,\xi+\xiti)-\cf\psi(\lati,\xi+\xiti)\big| 
\lesssim 
|\la| \, \mathcal{Q}^{(0)}(\la,\lati) \,  \prod_{i=1}^{d} \mathcal{T}^{(i)}(\xi_i+\xiti_i) .
\end{equation}
Along the same lines, for $i=1,\ldots,d$ we also get 
\begin{align}
&\big| \cf\psi(\lati,\xiti_1,\ldots,\xiti_{i-1},\xi_i+\xiti_i,\xi_{i+1}+\xiti_{i+1},\ldots,\xi_d+\xiti_d)
\nonumber\\
&\hspace{5cm}
-\cf\psi(\lati,\xiti_1,\ldots,\xiti_{i-1},\xiti_i,\xi_{i+1}+\xiti_{i+1},\ldots,\xi_d+\xiti_d)\big|
\nonumber\\
 &\lesssim 
 \mathcal{T}^{(0)}(\lati) \,
  \bigg(\prod_{j=1}^{i-1} \mathcal{T}^{(j)}(\xiti_j)\bigg) \,
 \left(|\xi_i| \cdot\mathcal{Q}^{(i)}(\xi_i,\xiti_i)\right) \,
 \bigg(\prod_{j=i+1}^{d} \mathcal{T}^{(j)}(\xi_j+\xiti_{j})\bigg)
 \, .\label{bou-var-psi-1-2}
\end{align}
We now point out a lemma on the functions $\mathcal{T}^{(i)}$ and $\mathcal{Q}^{(i)}$ which will be crucial in the sequel. 

\begin{lemma}\label{lem:t-i-q-i-improv}
Fix $\psi\in \cac^{d+1}(\R^{d+1};\R)$ with compact support, $i\in \{0,1,\ldots,d\}$, and let $\mathcal{T}^{(i)},\mathcal{Q}^{(i)}$ be the functions defined by \eqref{defi-t-0}-\eqref{defi-q-i}. 

\smallskip

\noindent
\textbf{(1)} For all $\beta_1,\beta_2 \in (0,2)$ such that $\beta_1+\beta_2>1$, it holds that
$$
\int_{\R^2} dx_1dx_2 \, \frac{|\mathcal{Q}^{(i)}(x_1,x_2)|^2}{|x_1|^{\beta_1-1}|x_2|^{\beta_2-1}} \ < \ \infty \, .
$$

\smallskip

\noindent
\textbf{(2)} For all $\la_1,\la_2\in (0,2)$ it holds that
$$
\int_{|x_1|\leq 1} dx_1\int_{\R}dx_2 \, \frac{|\mathcal{T}^{(i)}(x_1+x_2)|^2}{|x_1|^{\la_1-1}|x_2|^{\la_2-1}} \ < \ \infty\,  .
$$

\smallskip

\noindent
\textbf{(3)} For all $\la_1>0$ and $\la_2 \in (0,2)$ such that $\la_1+\la_2 >3$, it holds that
$$
\int_{|x_1|\geq 1} dx_1\int_{\R}dx_2 \, \frac{|\mathcal{T}^{(i)}(x_1+x_2)|^2}{|x_1|^{\la_1-1}|x_2|^{\la_2-1}} \ < \ \infty\,  .
$$
\end{lemma}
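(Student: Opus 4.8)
The plan is to reduce all three estimates to a handful of elementary one–dimensional integral computations, once sharp pointwise bounds on $\mathcal{T}^{(i)}$ and $\mathcal{Q}^{(i)}$ are in hand. \emph{Pointwise bounds.} Since $\psi\in\cac^{d+1}(\R^{d+1};\R)$ has compact support, the function $(t,y)\mapsto|(\partial_{tx_1\cdots x_d}\psi)(t,y)|$ is continuous and compactly supported, hence defines a finite measure, and $|t|$ and the $|y_i|$ stay bounded on its support. From the elementary identity $\int_0^a e^{-\imath\la u}\,du=\frac{e^{-\imath\la a}-1}{-\imath\la}$, which yields $\big|\int_0^a e^{-\imath\la u}\,du\big|\leq\min(|a|,2|\la|^{-1})$, and raising to the power $d+1$ before integrating against that measure and taking the $(d+1)$-th root, one first gets
\begin{equation*}
|\mathcal{T}^{(i)}(\la)|\ \lesssim\ \min(1,|\la|^{-1})\ ,\qquad i\in\{0,1,\ldots,d\},\ \la\in\R.
\end{equation*}
For $\mathcal{Q}^{(i)}$ one integrates the inner double integral first in the innermost variable, finding $\int_0^a e^{-\imath\tilde\la u}\big(\int_0^u e^{-\imath\la v}\,dv\big)du=\frac{1}{-\imath\la}\int_0^a(e^{-\imath(\la+\tilde\la)u}-e^{-\imath\tilde\la u})\,du$, whence, after the same power/root manipulation,
\begin{equation*}
|\mathcal{Q}^{(i)}(\la,\tilde\la)|\ \lesssim\ \min\!\Big(1,\ |\la|^{-1}\big[\min(1,|\la+\tilde\la|^{-1})+\min(1,|\tilde\la|^{-1})\big]\Big)\ ,
\end{equation*}
together with the mirror bound obtained by integrating first in the outer variable, i.e. by exchanging $\la$ and $\tilde\la$. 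In particular $|\mathcal{Q}^{(i)}(\la,\tilde\la)|\lesssim\min(1,|\la|^{-1})$ and $\lesssim\min(1,|\tilde\la|^{-1})$, and for $|\la|\geq1$ one has $|\mathcal{Q}^{(i)}(\la,\tilde\la)|^2\lesssim|\la|^{-2}\big[\min(1,|\la+\tilde\la|^{-2})+\min(1,|\tilde\la|^{-2})\big]$.

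The single analytic input I would then isolate is the one–dimensional estimate
\begin{equation*}
\int_\R\frac{\min(1,|v|^{-2})}{|v-a|^{\beta-1}}\,dv\ \lesssim\
\begin{cases}1 & \text{if }|a|\leq1,\\ |a|^{1-\beta} & \text{if }|a|\geq1,\end{cases}
\qquad \beta\in(0,2),
\end{equation*}
which follows from $\int_\R\min(1,|v|^{-2})\,dv<\infty$ and a split according to whether $|v|\leq|a|/2$ (so $|v-a|\approx|a|$) or $|v|>|a|/2$ (using $\int_{|w|<1}|w|^{1-\beta}\,dw<\infty$ since $\beta<2$, and $\int_{|v|>|a|/2}|v|^{-2+(1-\beta)_+}\,dv\lesssim|a|^{-1+(1-\beta)_+}$ since $\beta>0$). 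Beyond this, the only facts used are $\int_{|x|\leq1}|x|^{-p}\,dx<\infty$ for $p<2$ and $\int_{|x|\geq1}|x|^{-p}\,dx<\infty$ for $p>1$.

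Each of the three claims then follows by decomposing the domain according to whether $|x_1|\leq1$ or $|x_1|\geq1$, and, when convenient, performing the unit-Jacobian substitution $v=x_1+x_2$ in the inner variable. For (1): on $\{|x_1|\leq1\}$ use $|\mathcal{Q}^{(i)}|\lesssim1$ together with, for $|x_2|\geq1$, the mirror bound $|\mathcal{Q}^{(i)}|\lesssim|x_2|^{-1}$, which turns the integral into a product of one-dimensional integrals converging for $\beta_1,\beta_2\in(0,2)$; on $\{|x_1|\geq1\}$ insert $|\mathcal{Q}^{(i)}|^2\lesssim|x_1|^{-2}\big[\min(1,|x_1+x_2|^{-2})+\min(1,|x_2|^{-2})\big]$, handle the $|x_2|^{-2}$-piece as a product (needing $\beta_1>0$, $\beta_2\in(0,2)$) and, for the $|x_1+x_2|^{-2}$-piece, substitute $v=x_1+x_2$ and apply the one-dimensional estimate above, reaching $\int_{|x_1|\geq1}|x_1|^{-\beta_1-1}\,|x_1|^{1-\beta_2}\,dx_1=\int_{|x_1|\geq1}|x_1|^{-\beta_1-\beta_2}\,dx_1$, finite exactly when $\beta_1+\beta_2>1$. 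For (3): substitute $v=x_1+x_2$, use $|\mathcal{T}^{(i)}(v)|^2\lesssim\min(1,|v|^{-2})$, and apply the one-dimensional estimate to get an inner integral $\lesssim|x_1|^{1-\la_2}$ on $\{|x_1|\geq1\}$; the outer integral is then $\int_{|x_1|\geq1}|x_1|^{1-\la_2}\,|x_1|^{1-\la_1}\,dx_1=\int_{|x_1|\geq1}|x_1|^{2-\la_1-\la_2}\,dx_1$, finite exactly when $\la_1+\la_2>3$, while $\la_2<2$ and $\la_1,\la_2>0$ enter only through the one-dimensional estimate. For (2): here $|x_1|\leq1$, so the one-dimensional estimate bounds the inner $v$-integral by a constant (using $\la_2\in(0,2)$), leaving $\int_{|x_1|\leq1}|x_1|^{1-\la_1}\,dx_1<\infty$, which holds because $\la_1<2$; no sum condition appears because the ``anti-diagonal'' region $x_2\approx-x_1$ is now bounded.

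The part I expect to be the real obstacle is the derivation of the refined pointwise bound on $\mathcal{Q}^{(i)}$ in the first step, together with the correct region-by-region selection among its two variants and their crude consequences: it is precisely the behaviour near the anti-diagonal $\{x_2\approx-x_1,\ |x_1|\text{ large}\}$ — where product-type decay fails and one must trade a coarser pointwise bound against the one-dimensional thinness of the slice — that produces the sharp thresholds $\beta_1+\beta_2>1$ in (1) and $\la_1+\la_2>3$ in (3). The remaining case analysis is routine bookkeeping with the three elementary one-dimensional criteria above.
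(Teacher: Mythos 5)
Your argument is correct, but it takes a genuinely different route from the paper. For item~(1) the paper simply invokes \cite[Lemma 3.11]{De16}, whereas you give a self-contained proof based on the refined pointwise bound $|\mathcal{Q}^{(i)}(\la,\tilde{\la})|\lesssim |\la|^{-1}\big[\min(1,|\la+\tilde{\la}|^{-1})+\min(1,|\tilde{\la}|^{-1})\big]$ and its mirror, obtained by performing the inner (respectively outer) $u$--integration first. For items~(2)--(3) the paper starts from $|\mathcal{T}^{(i)}(x)|^2\lesssim(1+|x|^2)^{-1}$ and then splits the $(x_1,x_2)$--plane by hand: a strip $|x_2|\leq\tfrac12$, a non--anti-diagonal region where $\max(|x_1|,|x_2|)^{-2}$ is traded for $|x_1|^{-2\beta}|x_2|^{-2(1-\beta)}$ with an $\varepsilon$--tuned $\beta$, and the cone $\tfrac12|x_1|\leq|x_2|\leq\tfrac32|x_1|$ handled through the polar-type substitution $x_2=rx_1$. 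You instead isolate the single one-dimensional estimate $\int_\R \min(1,|v|^{-2})\,|v-a|^{1-\beta}\,dv\lesssim\max(1,|a|)^{1-\beta}$ for $\beta\in(0,2)$, perform the unit-Jacobian substitution $v=x_1+x_2$ once and for all, and read off the thresholds $\beta_1+\beta_2>1$ and $\la_1+\la_2>3$ from a single power-counting in the outer variable. Both routes are sound; yours is more systematic and self-contained, avoids the $\beta$-interpolation bookkeeping, and makes the anti-diagonal mechanism transparent (coarser pointwise decay compensated by the one-dimensional thinness of the slice), at the modest cost of deriving the slightly finer pointwise bound on $\mathcal{Q}^{(i)}$.
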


\begin{proof}
The result of item \textbf{(1)} is borrowed from \cite[Lemma 3.11]{De16}.

\smallskip

As for the proofs of items \textbf{(2)} and \textbf{(3)},  they both rely on the readily-checked bound
$$|\mathcal{T}^{(i)}(x)|^2\lesssim \frac{1}{1+|x|^2} .$$
For \textbf{(2)}, we have 
\begin{align*}
&\int_{|x_1|\leq 1} dx_1\int_{\R}dx_2 \, \frac{|\mathcal{T}^{(i)}(x_1+x_2)|^2}{|x_1|^{\la_1-1}|x_2|^{\la_2-1}}\lesssim \int_{|x_1|\leq 1} dx_1\int_{\R}dx_2 \, \frac{1}{|x_1|^{\la_1-1}|x_2|^{\la_2-1}}\frac{1}{1+|x_1+x_2|^2}\\
&\hspace{3cm}\lesssim \int_{|x_1|\leq 1} \frac{dx_1}{|x_1|^{\la_1-1}}\int_{|x_2|\leq 2} \, \frac{dx_2}{|x_2|^{\la_2-1}}+\int_{|x_1|\leq 1} \frac{dx_1}{|x_1|^{\la_1-1}}\int_{|x_2|\geq 2} \, \frac{dx_2}{|x_2|^{\la_2+1}} \ < \ \infty \, .
\end{align*}
As for \textbf{(3)}, we can first write
\begin{align*}
&\int_{|x_1|\geq 1} dx_1\int_{\R}dx_2 \, \frac{|\mathcal{T}^{(i)}(x_1+x_2)|^2}{|x_1|^{\la_1-1}|x_2|^{\la_2-1}}\\
&\lesssim \int_{|x_1|\geq 1} dx_1\int_{|x_2|\leq \frac12}dx_2 \, \frac{1}{|x_1|^{\la_1+1}|x_2|^{\la_2-1}}+\int_{|x_1|\geq 1} dx_1\int_{|x_2|\geq \frac12}dx_2 \, \frac{1}{|x_1|^{\la_1-1}|x_2|^{\la_2-1}}\frac{1}{1+|x_1+x_2|^2}.
\end{align*}
The first integral is clearly finite. Then decompose the second integral as
\begin{align}
&\int_{|x_1|\geq 1} dx_1\int_{|x_2|\geq \frac12}dx_2 \, \frac{1}{|x_1|^{\la_1-1}|x_2|^{\la_2-1}}\frac{1}{1+|x_1+x_2|^2}\nonumber\\
&=\int_{|x_1|\geq 1} dx_1\int_{\{\frac12 \leq |x_2|\leq \frac12|x_1| \}\cup \{|x_2|\geq \frac32 |x_1|\}}dx_2 \, \frac{1}{|x_1|^{\la_1-1}|x_2|^{\la_2-1}}\frac{1}{1+|x_1+x_2|^2}\nonumber\\
&\hspace{2cm}+\int_{|x_1|\geq 1} dx_1\int_{\frac12|x_1|\leq |x_2|\leq \frac32 |x_1|}dx_2 \, \frac{1}{|x_1|^{\la_1-1}|x_2|^{\la_2-1}}\frac{1}{1+|x_1+x_2|^2}\label{amelior-est-t-proof}
\end{align}
Now, on the one hand, note that if $\frac12 \leq |x_2|\leq \frac12|x_1|$ or $|x_2|\geq \frac32 |x_1|$, then $|x_1+x_2|\geq \max \frac13\big(|x_1|,|x_2|\big)$, and so, for any $\beta\in [0,1]$\begin{align}
&\int_{|x_1|\geq 1} dx_1\int_{\{\frac12 \leq |x_2|\leq \frac12|x_1| \}\cup \{|x_2|\geq \frac32 |x_1|\}}dx_2 \, \frac{1}{|x_1|^{\la_1-1}|x_2|^{\la_2-1}}\frac{1}{1+|x_1+x_2|^2}\nonumber\\
&\lesssim \int_{|x_1|\geq 1} \frac{dx_1}{|x_1|^{\la_1+2\beta-1}}\int_{|x_2|\geq \frac12}\frac{dx_2}{|x_2|^{\la_2+2(1-\beta)-1}}\label{integr-qcq}
\end{align}
Due to the assumption $\la_1+\la_2>3$, we can obviously write $\la_1+\la_2>2+\varepsilon$ for any small $\varepsilon>0$, and from here we can pick $\beta:=\frac{\la_2}{2}-\frac{\varepsilon}{2}\in [0,1]$, so that $\la_2+2(1-\beta)-1=1+\varepsilon>1$ and $\la_1+2\beta-1=\la_1+\la_2-\varepsilon-1>1$. For such a value of $\beta$, both integrals in \eqref{integr-qcq} are thus finite.

\smallskip

On the other hand, we can write
\begin{align*}
&\int_{|x_1|\geq 1} dx_1\int_{\frac12|x_1|\leq |x_2|\leq \frac32 |x_1|}dx_2 \, \frac{1}{|x_1|^{\la_1-1}|x_2|^{\la_2-1}}\frac{1}{1+|x_1+x_2|^2}\\
&=\int_{|x_1|\geq 1} dx_1 \, x_1\int_{\frac12\leq |r|\leq \frac32 }dr \, \frac{1}{|x_1|^{\la_1+\la_2-2}|r|^{\la_2-1}}\frac{1}{1+|x_1|^2(1+r)^2}\\
&\lesssim \int_{|x_1|\geq 1} \frac{dx_1}{|x_1|^{\la_1+\la_2-2-\varepsilon}}\int_{\frac12\leq |r|\leq \frac32 }\frac{dr}{(1+r)^{1-\varepsilon}}.
\end{align*}
Using the assumption $\la_1+\la_2>3$, we can pick $\varepsilon >0$ small enough such that $\la_1+\la_2-2-\varepsilon>1$, which shows that the above quantity is finite.
Going back to \eqref{amelior-est-t-proof}, this achieves the proof of item~\textbf{(3)}. 

\end{proof}

With those notations and preliminary results in hand, let us go back to \eqref{cs-ell}. Invoking~\eqref{bou-var-psi-1-1} and \eqref{bou-var-psi-1-2}, our claim \eqref{eq:bound for tilde sc^l} amounts to show that
\begin{multline}
\cj^{0,\ell}:=\iint_{\R^{d+1}\times \R^{d+1}} d\la d\xi d\lati d\xiti \, |\cf K(2^{2\ell}\la,2^\ell \xi)|^2
 \cn_{H_0,\bh}(\la,\xi) \cn_{H_0,\bh}(\lati,\xiti) \\
\times 
\left( |\la|^2\mathcal{Q}^{(0)}(\la,\lati)^2\right)
\,\prod_{i=1}^d\left(\mathcal{T}^{(i)}(\xi_i+\xiti_i)\right)^2  
\lesssim  
2^{-4\ell (1-\varepsilon)},\label{tech-bou-1-1}
\end{multline}
and that for every fixed $i=1,\ldots,d$, we have
\begin{multline}
\mathcal{J}^{i,\ell}:=\iint_{\R^{d+1}\times \R^{d+1}} d\la d\xi d\lati d\xiti \, |\cf K(2^{2\ell}\la,2^\ell \xi)|^2 \cn_{H_0,\bh}(\la,\xi) \cn_{H_0,\bh}(\lati,\xiti) \left(\mathcal{T}^{(0)}(\lati)\right)^2  
\\
\times  
\prod_{j=1}^{i-1}\left(\mathcal{T}^{(j)}(\xiti_j)\right)^2 
\left(|\xi_i|^2 \mathcal{Q}^{(i)}(\xi_i,\xiti_i)^2\right)
\prod_{j=i+1}^d\left(\mathcal{T}^{(j)}(\xi_{j}+\xiti_{j})\right)^2
 \lesssim  2^{-4\ell (1-\varepsilon)}\, .\label{tech-bou-1-2}
\end{multline}

\

To establish these bounds, we will split the integration domain for the variables $\la,\xi$ along 
$$D_{-}:=\{\la\in \R: \, |\la|\leq 1\} \quad \text{and} \quad D_+:=\{\la\in \R: \, |\la| \geq 1\}\, ,$$
that is we set, for every $\mathbf{s}\in \{-,+\}^{d+1}$, $D_{\mathbf{s}}:=\prod_{k=0}^d D_{\mathbf{s}_k}$, and then consider 
\begin{multline}
\cj^{0,\ell}_{\mathbf{s}}:=\iint_{D_{\mathbf{s}}\times \R^{d+1}} d\la d\xi d\lati d\xiti \, |\cf K(2^{2\ell}\la,2^\ell \xi)|^2
 \cn_{H_0,\bh}(\la,\xi) \cn_{H_0,\bh}(\lati,\xiti) \\
\times 
\left( |\la|^2\mathcal{Q}^{(0)}(\la,\lati)^2\right)
\,\prod_{i=1}^d\left(\mathcal{T}^{(i)}(\xi_i+\xiti_i)\right)^2  ,\label{tech-bou-1-1-s} .
\end{multline}
For every fixed $i=1,\ldots,d$, we also set
\begin{multline}
\mathcal{J}^{i,\ell}_{\mathbf{s}}:=\iint_{D_{\mathbf{s}}\times \R^{d+1}} d\la d\xi d\lati d\xiti \, |\cf K(2^{2\ell}\la,2^\ell \xi)|^2 \cn_{H_0,\bh}(\la,\xi) \cn_{H_0,\bh}(\lati,\xiti) \left(\mathcal{T}^{(0)}(\lati)\right)^2  
\\
\times  
\prod_{j=1}^{i-1}\left(\mathcal{T}^{(j)}(\xiti_j)\right)^2 
\left(|\xi_i|^2 \mathcal{Q}^{(i)}(\xi_i,\xiti_i)^2\right)
\prod_{j=i+1}^d\left(\mathcal{T}^{(j)}(\xi_{j}+\xiti_{j})\right)^2
\, .\label{tech-bou-1-2-s}
\end{multline}
It is clear that \eqref{tech-bou-1-1} and \eqref{tech-bou-1-2} will hold true if we can show that for every $\mathbf{s}\in \{-,+\}^{d+1}$,
\begin{equation}\label{purpose}
\mathcal{J}^{0,\ell}_{\mathbf{s}} \lesssim 2^{-4\ell (1-\varepsilon)}\quad \text{and} \quad \mathcal{J}^{i,\ell}_{\mathbf{s}}\lesssim 2^{-4\ell (1-\varepsilon)} \, .
\end{equation}
We will now treat the two integrals \eqref{tech-bou-1-1-s} and \eqref{tech-bou-1-2-s} separately.

\

\noindent
\underline{Bound on \eqref{tech-bou-1-1-s}}. Let $\mathbf{s}\in \{-,+\}^{d+1}$ be fixed. We can apply Lemma \ref{lem:estim-fourier-k} and recall the definition~\eqref{notation:cn-h} of $\cn_{H_0,\bh}$ in order to assert that for all $a_0,a_1,\ldots,a_d\in [0,1]$ such that $a_0+a_1+\ldots+a_d <1$, the integral in~\eqref{tech-bou-1-1-s} is bounded (up to a  constant) by
\begin{multline}
2^{-4\ell(a_0+a_1+\ldots+a_d)}\bigg(\int_{D_{\mathbf{s}_0}\times \R} d\la d\lati  \, \frac{\mathcal{Q}^{(0)}(\la,\lati)^2}{|\la|^{(2a_0+2H_0-2)-1} |\lati|^{2H_0-1}} \bigg) \\
\times
\prod_{i=1}^d \bigg( \int_{D_{\mathbf{s}_i}\times \R} d\xi_i d\xiti_i \, \frac{\mathcal{T}^{(i)}(\xi_{i}+\xiti_{i})^2}{|\xi_i|^{4a_i+2H_i-1} |\xiti_i|^{2H_i-1}} \bigg) \, .\label{bou-varia-xi-1}
\end{multline}
The whole point now is that we can find parameters $a_0,a_1,\ldots,a_d\in [0,1]$ such that $a_0+a_1+\ldots+a_d =1-\varepsilon$ and such that the integrals involved in the above expression are all finite. In order to justify this claim, we can refer to Lemma \ref{lem:t-i-q-i-improv}. According to this property, the first integral  in \eqref{bou-varia-xi-1} is finite whenever $2a_0+2H_0>3-2H_0$ and $2a_0+2H_0<4$. Moreover, since $0<a_0<1$, we have $2H_0<2a_0+2H_0<2+2H_0<4$. Summarizing those elementary considerations and similar ones for the second integral in \eqref{bou-varia-xi-1}, we get that \eqref{bou-varia-xi-1} is a finite expression as long as
\begin{equation}\label{cond-a-1-1}
\left\{
\begin{array}{ll}
\max(2H_0,3-2H_0)<2a_0+2H_0<2+2H_0 &\\
2H_i<4a_i+2H_i <2 & \text{for} \ i\in \{i\in \{1,\ldots,d\}: \, \mathbf{s}_i=-\}\\
3-2H_i<4a_i+2H_i <4+2H_i& \text{for} \ i\in \{i\in \{1,\ldots,d\}: \, \mathbf{s}_i=+\}.
\end{array}
\right.
\end{equation}

Provided \eqref{cond-a-1-1} is met and $a_0+a_1+\ldots+a_d =1-\varepsilon$, we thus have that the expression \eqref{bou-varia-xi-1} is bounded, up to a constant, by $2^{4\ell(1-\varepsilon)}$. This proves \eqref{tech-bou-1-1}.

We now show that the above-reported conditions can indeed be fulfilled under our standing assumptions. In fact,

\smallskip

\noindent
(i) Since $H_0>\frac14$ (see \eqref{impli-h-0}), the first condition in \eqref{cond-a-1-1} is easily shown to be satisfied for some values of $a_0\in(0,1)$. 

\smallskip

\noindent
(ii) The conditions \eqref{cond-a-1-1} can also be made consistent with the desired assumption $\sum_{i=0}^da_i=1-\epsilon$ for $\epsilon>0$. In order to verify this assertion, sum the constraints in \eqref{cond-a-1-1}. This yields
\begin{align}\label{condit-a}
A^0_{\mathbf{s}}< 2(2a_0+2H_0)+\sum_{i=1}^d (4a_i+2H_i)<B^0_{\mathbf{s}}\, ,
\end{align}
with two parameters $A_{\mathbf{s}}, B_{\mathbf{s}}$ defined by
\begin{align}\label{defi-a-i}
A^0_{\mathbf{s}}&:= 2\max(2H_0,3-2H_0)+2\sum_{\substack{i=1,\ldots,d\\\mathbf{s}_i=-}} H_i+\sum_{\substack{i=1,\ldots,d\\\mathbf{s}_i=+}}(3-2H_i) 
\end{align}
\begin{align*}
B^0_{\mathbf{s}}&=2(2+2H_0)+2\big|\{i\in \{1,\ldots,d\}: \, \mathbf{s}_i=-\}\big|+\sum_{\substack{i=1,\ldots,d\\\mathbf{s}_i=+}} (4+2H_i)\, .
\end{align*}
We now resort to the assumption $\sum_{i=0}^da_i=1-\epsilon$. Recalling our notation $H=\sum_{i=1}^dH_i$, we end up with the condition
\begin{equation}\label{sum cond-a}
A^0_{\mathbf{s}}<4(1-\varepsilon)+2(2H_0+H) <B^0_{\mathbf{s}}.
\end{equation}
In order to see that these two inequalities are indeed satisfied (at least for $\varepsilon>0$ small enough), observe first that
\begin{align*}
B^0_{\mathbf{s}}&=2(2+2H_0)+2d+2\sum_{\substack{i=1,\ldots,d\\\mathbf{s}_i=+}} (1+H_i)\geq 4+2(2H_0+d)>4+2(2H_0+H)\, ,
\end{align*}
where the last inequality immediately follows from the trivial bound $H<d$.

\smallskip

As for the first inequality in \eqref{sum cond-a}, note that
\begin{align}
A^0_{\mathbf{s}}&< 2\max(2H_0,3-2H_0)+2\sum_{\substack{i=1,\ldots,d\\\mathbf{s}_i=-}} H_i+\sum_{\substack{i=1,\ldots,d\\\mathbf{s}_i=+}}(3-2H_i)\nonumber\\
&<2\max(2H_0,3-2H_0)+\sum_{i=1}^d \max(2H_i,3-2H_i)\nonumber\\
& <2\max(2,3-2H_0)+\max(2,3-2H_1)+\max(2,3-2H_2)\1_{d\geq 2}+2(d-2)\1_{d\geq 2} \, ,\label{bound-a-s}
\end{align}
where we have used the observation \eqref{observ-h-i} to derive the last inequality. The following table collects the possible values of the bound in \eqref{bound-a-s}, depending on $H_0,H_1,H_2$ (remember that $H_1\leq H_2$):

\begin{center}
\begin{tabular}{c|c|c|c|c}
$H_0$ & $H_1$ & $H_2$ &$A^0_{\mathbf{s}}$ for $d=1$&$A^0_{\mathbf{s}}$ for $d\geq 2$\\
\hline
$(0,\frac12]$ & $(0,\frac12]$ & $(0,\frac12]$ & $<9-2(2H_0+H_1)$ &$<2d+8-2(2H_0+H_1+H_2)$\\
\hline
$(0,\frac12]$ & $(0,\frac12]$ & $(\frac12,1)$ & $<9-2(2H_0+H_1)$&$<2d+7-2(2H_0+H_1)$\\
\hline
$(0,\frac12]$ & $(\frac12,1)$ & $(\frac12,1)$ & $<8-4H_0$&$<2d+6-4H_0$\\
\hline
$(\frac12,1)$ & $(0,\frac12]$ & $(0,\frac12]$ & $<7-2H_1$&$<2d+6-2(H_1+H_2)$\\
\hline
$(\frac12,1)$ & $(0,\frac12]$ & $(\frac12,1)$ & $<7-2H_1$&$<2d+5-2H_1$\\
\hline
$(\frac12,1)$ & $(\frac12,1)$ & $(\frac12,1)$ & $<6$&$<2d+4$\\
\end{tabular}
\end{center}
Based on these values, and using the three conditions \eqref{impli-h-0}-\eqref{impli-h-1}-\eqref{impli-h-2}, we can easily conclude that
$$A^0_{\mathbf{s}}< 2d+5<4+2(2H_0+H) \, ,$$
where the last bound is derived from the assumption $2H_0+H>d+\frac12$.

\smallskip

We have thus checked that \eqref{sum cond-a} holds true, and this completes the proof of the desired estimate
\begin{equation}\label{estim-fin-j-0-ell-s}
\mathcal{J}^{0,\ell}_{\mathbf{s}} \lesssim 2^{-4\ell (1-\varepsilon)}\, .
\end{equation}

\

\noindent
\underline{Bound on \eqref{tech-bou-1-2-s}}. Let us fix $\mathbf{s}\in \{-,+\}^{d+1}$ and $i\in \{1,\ldots,d\}$. In order to bound $\cj^{i,\ell}_{\mathbf{s}}$, we proceed similarly to \eqref{bou-varia-xi-1}. Namely we apply Lemma \ref{lem:estim-fourier-k} to assert that for all $a_0,a_1,\ldots,a_d\in [0,1]$ such that $a_0+a_1+\ldots+a_d <1$,
\begin{align}
\cj^{i,\ell}_{\mathbf{s}}&\lesssim 2^{-4\ell(a_0+a_1+\ldots+a_d)} \bigg( \int_{D_{\mathbf{s}_0}} \,  \frac{d\la}{|\la|^{2a_0+2H_0-1}}\bigg)\bigg( \int_{\R} d\lati \, \frac{\mathcal{T}^{(0)}(\lati)^2}{|\lati|^{2H_0-1}} \bigg) \prod_{r=1}^{i-1} \bigg( \int_{\R} d\xiti_r \, \frac{\mathcal{T}^{(r)}(\xiti_r)^2}{|\xiti_r|^{2H_r-1}} \bigg)\nonumber\\
&\ \times\prod_{k=1}^{i-1}\bigg( \int_{D_{\mathbf{s}_k}} \frac{d\xi_k}{|\xi_k|^{4a_k+2H_k-1}} \bigg)\nonumber\\
&\ \ \times \bigg(\int_{D_{\mathbf{s}_i}\times \R} d\xi_i d\xiti_i \, \frac{\mathcal{Q}^{(i)}(\xi_i,\xiti_i)^2}{|\xi_i|^{(4a_i+2H_i-2)-1}|\xiti_i|^{2H_i-1}} \bigg)\prod_{p=i+1}^d \bigg( \int_{D_{\mathbf{s}_p}\times \R} d\xi_p d\xiti_p \, \frac{\mathcal{T}^{(p)}(\xi_{p}+\xiti_{p})^2}{|\xi_p|^{4a_p+2H_p-1} |\xiti_p|^{2H_p-1}} \bigg) \, ,\label{bound J^il_j+}
\end{align}
where we recall that $D_-:=[-1,1]$ and $D_+:=\R\backslash [-1,1]$.

\smallskip

Based on  the criteria of Lemma \ref{lem:t-i-q-i-improv}, we get the following conditions on the parameters $a_0,a_1,\ldots,a_d$ (so as to ensure that the integrals in \eqref{bound J^il_j+} are all finite, and also that each $a_i$ belongs to $(0,1)$):
\begin{equation}\label{cond-a-1-2}
\left\{
\begin{array}{ll}
2H_0<2a_0+2H_0<2& \text{if} \ \mathbf{s}_0=-\\
2<2a_0+2H_0<2+2H_0& \text{if} \ \mathbf{s}_0=+ \\
2H_k<4a_k+2H_k <2 & \text{for} \ k\in \{k\in \{1,\ldots,i-1\}: \, \mathbf{s}_k=-\}\\
2<4a_k+2H_k <4+2H_k& \text{for} \ k\in \{k\in \{1,\ldots,i-1\}: \, \mathbf{s}_k=+\} \\
\max(2H_i,3-2H_i)<4a_i+2H_i <4\\
2H_p<4a_p+2H_p <2 & \text{for} \ p\in \{p\in \{i+1,\ldots,d\}: \, \mathbf{s}_p=-\}\\
3-2H_p<4a_p+2H_p <4+2H_p& \text{for} \ p\in \{p\in \{i+1,\ldots,d\}: \, \mathbf{s}_p=+\} .
\end{array}
\right.
\end{equation}

\smallskip

As in the proof of \eqref{tech-bou-1-1}, we still have to verify that the parameters $a_0,\dots,a_d$ can be chosen so that $\sum_{k=0}^da_k=1-\varepsilon$. To this aim, we use the same strategy as for \eqref{cond-a-1-1}. Namely we sum all the constraints in \eqref{cond-a-1-2}, which yields the following condition:
\begin{align}\label{condit-a-i}
A^i_{\mathbf{s}}< 4(1-\varepsilon)+2(2H_0+H)<B^i_{\mathbf{s}}\, ,
\end{align}
with two parameters $A^i_{\mathbf{s}}, B^i_{\mathbf{s}}$ defined by
\begin{align}
A^i_{\mathbf{s}}&:= 4\{H_0\, \1_{\mathbf{s}_0=-}+\1_{\mathbf{s}_0=+}\}+2\sum_{\substack{k=1,\ldots,i-1\\\mathbf{s}_k=-}} H_k+2\big|\{k\in \{1,\ldots,i-1\}: \, \mathbf{s}_k=+\}\big|\nonumber\\
&\hspace{1cm}+\max(2H_i,3-2H_i)+2\sum_{\substack{p=i+1,\ldots,d\\\mathbf{s}_p=-}} H_p+\sum_{\substack{p=i+1,\ldots,d\\\mathbf{s}_p=+}} (3-2H_p),\label{defi-a-+-i}
\end{align}
\begin{align*}
B^i_{\mathbf{s}}&:=4\{\, \1_{\mathbf{s}_0=-}+(1+H_0)\1_{\mathbf{s}_0=+}\}+2\big|\{k\in \{1,\ldots,i-1\}: \, \mathbf{s}_k=-\}\big|+\sum_{\substack{k=1,\ldots,i-1\\\mathbf{s}_k=+}} (4+2H_k)\\
&\hspace{3cm}+4+2\big|\{p\in \{i+1,\ldots,d\}: \, \mathbf{s}_p=-\}\big|+\sum_{\substack{p=i+1,\ldots,d\\\mathbf{s}_p=+}} (4+2H_p)\, .
\end{align*}
In order to see that $A^i_{\mathbf{s}}<4+2(2H_0+H)$, observe first that 
\begin{equation}\label{bound-a-i-s}
A^i_{\mathbf{s}}<4+2(i-1)+\sum_{q=i}^d\max(2H_q,3-2H_q) \, .
\end{equation}
Let us recall that, by \eqref{observ-h-i}, one has $H_q>\frac12$ for $q\geq 3$, and so the above bound yields, for $i\geq 3$, 
$$A^i_{\mathbf{s}}<4+2(i-1)+2(d-i+1)=4+2d<3+2(2H_0+H),$$
where we have used the assumption $d+\frac12<2H_0+H$ to derive the last inequality.

\smallskip

Then, using again \eqref{bound-a-i-s}, we have
\begin{align*}
A^2_{\mathbf{s}}<6+\max(2H_2,3-2H_2)+2(d-2)&=2+2d+\max(2H_2,3-2H_2)\\
&<5+2d<4+2(2H_0+H) \, ,
\end{align*}
where we have again used the assumption $d+\frac12<2H_0+H$ to derive the last inequality.

\smallskip

As for $A^1_{\mathbf{s}}$, we get by \eqref{bound-a-i-s} that
\begin{align*}
A^1_{\mathbf{s}}&<4+\max(2H_1,3-2H_1)+\max(2H_2,3-2H_2)+2(d-2)\\
&<2d+\max(2,3-2H_1)+\max(2,3-2H_2)\\
&\leq 2d+\max(4,5-2H_1,5-2H_2,6-2(H_1+H_2))\\
&<5+2d<4+2(2H_0+H) \, ,
\end{align*}
where we have used \eqref{impli-h-2} to get the fourth inequality.

\smallskip

This completes the proof of the first inequality in \eqref{condit-a-i}.

\smallskip

For the second inequality (i.e., $4(1-\varepsilon)+2(2H_0+H)<B^i_{\mathbf{s}}$), let us write $B^i_{\mathbf{s}}$ as
\begin{align*}
B^i_{\mathbf{s}}&=4\{\, \1_{\mathbf{s}_0=-}+(1+H_0)\1_{\mathbf{s}_0=+}\}+2(i-1)+2\sum_{\substack{k=1,\ldots,i-1\\\mathbf{s}_k=+}} (1+H_k)\\
&\hspace{3cm}+4+2(d-i)+2\sum_{\substack{p=i+1,\ldots,d\\\mathbf{s}_p=+}} (1+H_p)\\
&=2d+2+4\{\, \1_{\mathbf{s}_0=-}+(1+H_0)\1_{\mathbf{s}_0=+}\}+2\sum_{\substack{k=1,\ldots,i-1\\\mathbf{s}_k=+}} (1+H_k)+2\sum_{\substack{p=i+1,\ldots,d\\\mathbf{s}_p=+}} (1+H_p)\, ,
\end{align*}
and from here it is clear that
$$B^i_{\mathbf{s}}>6+2d\geq 4+2(2H_0+H),$$
where the last inequality stems from the assumption $2H_0+H\leq d+1$.

\smallskip

We have thus checked that \eqref{condit-a-i} holds true, and this completes the proof of the desired estimate: for every $i=1,\ldots,d$,
\begin{equation}\label{estim-fin-j-i-ell-s}
\mathcal{J}^{i,\ell}_{\mathbf{s}} \lesssim 2^{-4\ell (1-\varepsilon)}\, .
\end{equation}

\smallskip

The combination of \eqref{estim-fin-j-0-ell-s} and \eqref{estim-fin-j-i-ell-s} precisely corresponds to \eqref{purpose}, and accordingly the proof of \eqref{eq:bound for tilde sc^l} is achieved.

\subsection{Conclusion: proof of Theorem \ref{theo:conv-k-rp}.}\label{subsec:conclusion}

Let us now see how we can use the moments estimates of Propositions \ref{prop:estim-mom-first} and \ref{prop:estim-mom-second} in order to prove the desired convergence \eqref{desired-convergence}. 

\smallskip

First, by applying Lemma \ref{GRR-gene} to a constant distribution $\zeta_{s,x}:=\dw^n-\dw^m$ (which means that $\theta^{i}=\zeta^{\sharp,i}=0$ in Lemma~\ref{GRR-gene}), we get that for every $k,p\geq 1$ ,
\begin{align*}
\mathbb{E} \Big[ \big\|\dw^{n}-\dw^{m}\big\|_{\al;k,w}^{2p} \Big]& \lesssim \mathbb{E} \Big[\sup_{\psi\in \Psi} \sup_{\ell\geq 0} \sup_{(s,x)\in \Lambda_\scal^\ell \cap([-(k+2),k+2]\times \R^d) } 2^{2\ell p \al} \frac{|\langle \dw^{n}-\dw^{m},\cs_{s,x}^{2^{-\ell}} \psi \rangle |^{2p}}{w(x)^{2p}}\Big]\\
&\lesssim \sum_{\psi\in \Psi} \sum_{\ell \geq 0} \sum_{(s,x)\in \Lambda^\ell_\scal \cap([-(k+2),k+2]\times \R^d)}2^{2\ell p \al} \frac{\mathbb{E}\big[|\langle \dw^{n}-\dw^{m},\cs_{s,x}^{2^{-\ell}} \psi \rangle |^{2p}\big]}{w(x)^{2p}} \ , 
\end{align*}
Furthermore, $\dw^{n}-\dw^{m}$ is a Gaussian process. Therefore we have
\begin{align}
&\mathbb{E} \Big[ \big\|\dw^{n}-\dw^{m}\big\|_{\al;k,w}^{2p} \Big] 
\lesssim \sum_{\psi\in \Psi} \sum_{\ell \geq 0} \sum_{(s,x)\in \Lambda^\ell_\scal \cap([-(k+2),k+2]\times \R^d)}2^{2\ell p \al} \frac{\mathbb{E}\big[|\langle \dw^{n}-\dw^{m},\cs_{s,x}^{2^{-\ell}} \psi \rangle |^{2}\big]^p}{w(x)^{2p}}\nonumber\\
& \hspace{2cm}
\lesssim  2^{-m\varepsilon p}\sum_{\ell \geq 0}2^{2\ell p (\al+d+2-(2H_0+H)+\varepsilon)} \sum_{(s,x)\in \Lambda^\ell_\scal \cap([-(k+2),k+2]\times \R^d)} w(x)^{-2p} \, ,\label{estim-interm}
\end{align}
where the last inequality follows from Proposition \ref{prop:estim-mom-first} and the fact that $\Psi$ is a finite set.

\smallskip

At this point, observe that
\begin{align*}
\sum_{(s,x)\in \Lambda^\ell_\scal \cap([-(k+2),k+2]\times \R^d)} w(x)^{-2p}&=\bigg(\sum_{q_0 \in \Z}\1_{\{-(k+2) \leq q_0 2^{-2\ell}\leq k+2\}}\bigg) \bigg( \sum_{q\in \Z^d} \big(1+2^{-\ell}|q|\big)^{-2\ka p}\bigg) \\
&\lesssim 2^{2\ell} k \bigg\{1+2^{\ka \ell p} \sum_{q\in \Z^d \backslash \{0\}} |q|^{-2\ka p} \bigg\} \ .
\end{align*}
Owing to our assumption $\al<-(d+2)+(2H_0+H)$, we can pick $\varepsilon >0$ small enough such that $\beta:= -\al-(d+2)+(2H_0+H)-\varepsilon >0$. Going back to \eqref{estim-interm}, we have obtained that for every $k,p\geq 1$,
\begin{equation}\label{estim-first-last}
\mathbb{E} \Big[ \big\|\dw^{n}-\dw^{m}\big\|_{\al;k,w}^{2p} \Big]  \lesssim k \, 2^{-m\varepsilon p} \sum_{\ell \geq 0}   \bigg\{2^{-2\ell (\beta p-1)}+2^{-2\ell ((\beta-\ka) p-1)} \sum_{q\in \Z^d \backslash \{0\}} |q|^{-2\ka p} \bigg\} \ .
\end{equation}
Without loss of generality, we can here assume that $0< \ka <\beta$. Then we can pick $p\geq 1$ large enough so that $(\beta-\ka)p-1 >0$ and $2\ka p> d$, which ensures that the sum in \eqref{estim-first-last} is finite, and so, for every $k\geq 1$ and any such large $p\geq 1$,
\begin{equation}\label{ordre-un}
\mathbb{E} \Big[ \big\|\dw^{n}-\dw^{m}\big\|_{\al;k,w}^{2p} \Big]  \lesssim k \, 2^{-m\varepsilon p} \,  .
\end{equation}

\smallskip

Using similar arguments (starting from Lemma \ref{GRR-gene}, and also leaning on \eqref{bou-k-ast}), we can then turn the estimate of Proposition \ref{prop:estim-mom-second} into the bound 
\begin{equation}\label{ordre-deux}
\mathbb{E} \Big[ \big\|\widehat{\bw}^{\mathbf{2},n}-\widehat{\bw}^{\mathbf{2},m}\big\|_{2\al+2;k,w}^{2p} \Big]  \lesssim k \, 2^{-m\varepsilon p} \,  ,
\end{equation}
for every $k\geq 1$, every $\varepsilon >0$ small enough and every $p\geq 1$ large enough.

\smallskip

Combining \eqref{ordre-un} and \eqref{ordre-deux}, we get that for all $\varepsilon >0$ small enough and $p\geq 1$ large enough
$$\mathbb{E} \big[d_{\al,w}(\widehat{\uw}^{n},\widehat{\uw}^m)^{2p} \big]\lesssim 2^{-m\varepsilon p} \, ,$$
for all $n\geq m\geq 1$, and accordingly $(\widehat{\uw}^{n})_{n\geq 1}$ is a Cauchy sequence in $L^p(\Omega; (\mathcal{E}^{K}_{\al;w},d_{\al;w}))$. By Lemma~\ref{complet}, we can assert that there exists an element $\widehat{\uw}\in \mathcal{E}^{K}_{\al;w}$ satisfying 
$$\mathbb{E} \big[d_{\al,w}(\widehat{\uw},\widehat{\uw}^m)^{2p} \big]\lesssim 2^{-m\varepsilon p} \, ,$$
for every $p\geq 1$ large enough. The desired conclusion, that is the almost sure convergence of $\widehat{\uw}^{n}$ to $\widehat{\uw}$ in $(\mathcal{E}^{K}_{\al;w},d_{\al;w})$, immediately follows from Borel-Cantelli lemma.

\section{Proof of Theorem \ref{theo:conv-k-rp-spa}}\label{sec:stochastic-constructions-spa}

As we announced it earlier, the proof of Theorem \ref{theo:conv-k-rp-spa} will in fact reduce to a review of the few adaptations to be made with respect to the proof of Theorem \ref{theo:conv-k-rp}. Observe first that in this setting, identities \eqref{cova-xi-n-xi-n} and \eqref{cova-i-n-i-n} immediately give way to the following covariance formulas:

\begin{lemma}
Let $\dw^n$ be the smoothed noise defined by \eqref{def:approximate noise-spa} and recall that the kernel $\kti$ is defined by \eqref{decompo-k}. For every fixed $n\geq 1$, the families $\{\dw^n(y); y\in\R^{d}\}$ and $\{\kti\ast\dw^n(y); y\in\R^{d}\}$ are centered Gaussian processes with respective covariance functions given by the formulas
\begin{equation}\label{cova-xi-n-xi-n-spa}
\mathbb{E}\big[ \dw^n(y) \dw^n(\yti) \big]=c_{\bh}^2 \int_{\R^{d}} d\xi\, |\cfs\rho_n(\xi)|^2 \cn_{\bh}(\xi)e^{\imath \xi \cdot (y-\yti)}\, ,
\end{equation}
and
\begin{align}
&\mathbb{E}\big[ (\kti\ast\dw^n)(y) (\kti\ast\dw^n)(\yti) \big]=c_{\bh}^2 \int_{\R^{d}} d\xi\, |\cfs\rho_n(\xi)|^2 |\cfs \kti(\xi)|^2\cn_{\bh}(\xi)\lastchange{e^{\imath\xi \cdot (y-\yti)}}\, ,\label{cova-i-n-i-n-spa}
\end{align}
where the notation $\cn_{\bh}$ has been introduced in \eqref{notation:cn-h-spa} and the constant $c_{\bh}$ is the one given in \eqref{cstt-c-h}.
\end{lemma}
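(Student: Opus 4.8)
The plan is to transcribe, one dimension down, the argument behind the space–time covariance formulas \eqref{cova-xi-n-xi-n}--\eqref{cova-i-n-i-n}, keeping track of the single point where the spatial situation genuinely differs. First I would record the distributional covariance of the spatial noise $\dw=\partial_{x_{1}}\cdots\partial_{x_{d}}W^{\bh}$. Since $W^{\bh}$ is the centered Gaussian fractional sheet on $\R^{d}$ with covariance $\prod_{i=1}^{d}\frac{1}{2}\big(|x_{i}|^{2H_{i}}+|y_{i}|^{2H_{i}}-|x_{i}-y_{i}|^{2H_{i}}\big)$, and each one–dimensional factor has the spectral representation $\frac{1}{2}\big(|x_{i}|^{2H_{i}}+|y_{i}|^{2H_{i}}-|x_{i}-y_{i}|^{2H_{i}}\big)=c_{H_{i}}^{2}\int_{\R}\frac{(e^{\imath x_{i}\xi_{i}}-1)\overline{(e^{\imath y_{i}\xi_{i}}-1)}}{|\xi_{i}|^{2H_{i}+1}}\,d\xi_{i}$ with $c_{H_{i}}$ the constant of \eqref{cstt-c-h}, applying $\partial_{x_{1}}\cdots\partial_{x_{d}}\partial_{y_{1}}\cdots\partial_{y_{d}}$ and using that $\partial_{x_{i}}\partial_{y_{i}}$ turns the $i$-th integrand into $|\xi_{i}|^{1-2H_{i}}e^{\imath(x_{i}-y_{i})\xi_{i}}$ shows that $\dw$ has covariance bilinear form $\E[\dw(f)\dw(g)]=c_{\bh}^{2}\int_{\R^{d}}\cfs f(\xi)\,\overline{\cfs g(\xi)}\,\cn_{\bh}(\xi)\,d\xi$, which is exactly the $d$-dimensional analogue of the form producing \eqref{cova-xi-n-xi-n}.

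From here, \eqref{cova-xi-n-xi-n-spa} is obtained just as \eqref{cova-xi-n-xi-n}: since $\dw^{n}=\rho_{n}\ast\dw$ is a genuine smooth random function one may write $\dw^{n}(y)=\dw\big(\rho_{n}(y-\cdot)\big)$, plug $f=\rho_{n}(y-\cdot)$ and $g=\rho_{n}(\yti-\cdot)$ into the bilinear form, use $\cfs\big(\rho_{n}(y-\cdot)\big)(\xi)=e^{-\imath\xi\cdot y}\overline{\cfs\rho_{n}(\xi)}$, and replace $\xi$ by $-\xi$ (legitimate since $\cn_{\bh}$ and $|\cfs\rho_{n}|^{2}$ are even). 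Finiteness of the resulting integral is a finiteness check of exactly the type carried out in the proof of Lemma~\ref{lem:cstt-renorm}: near the origin one uses that $\cfs\rho_{n}$ is bounded and that $\cn_{\bh}$ is locally integrable (because each $H_{i}<1$), and near infinity one invokes \eqref{asympt-rho-spa} with $\tau_{1}=\cdots=\tau_{d}=1$ together with $H_{i}>0$.

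The only place where the spatial case is not a verbatim copy is the second identity, because the convolution $K\ast\dw^{n}$ is still a \emph{space–time} convolution applied to the time-independent field $\dw^{n}$. Using that $K(s,\cdot)=0$ for $s\leq 0$ (item (ii) of Definition~\ref{defi:loc-heat-ker}), one has $(K\ast\dw^{n})(t,y)=\int_{\R^{d+1}}K(s,z)\,\dw^{n}(y-z)\,ds\,dz=\big(\int_{0}^{\infty}K(s,\cdot)\,ds\big)\ast\dw^{n}(y)=(\kti\ast\dw^{n})(y)$ with $\kti$ as in \eqref{kernel-k-tilde}; in particular $\cfs\kti(\xi)=\cf K(0,\xi)$, and since $\kti\in L^{1}(\R^{d})$ the function $\cfs\kti$ is bounded. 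Consequently $\E\big[(\kti\ast\dw^{n})(y)(\kti\ast\dw^{n})(\yti)\big]=\iint\kti(z)\kti(z')\,\E\big[\dw^{n}(y-z)\dw^{n}(\yti-z')\big]\,dz\,dz'$, and inserting \eqref{cova-xi-n-xi-n-spa} and applying Fubini (licit by boundedness of $\cfs\kti$ and absolute convergence of the $\xi$-integral) produces the extra factor $|\cfs\kti(\xi)|^{2}$, which is \eqref{cova-i-n-i-n-spa}. Finally, both $\dw^{n}(y)$ and $(\kti\ast\dw^{n})(y)$ are $L^{2}(\Omega)$-limits of linear functionals of the centered Gaussian sheet $W^{\bh}$, so the two families are jointly Gaussian and centered. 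I expect the main (purely bookkeeping) obstacle to be the justification of the two reductions just used — from the fractional-sheet covariance to the distributional covariance of $\dw$, and from the space–time convolution with $K$ to the spatial convolution with $\kti$ — since beyond these the computation is the one already done for \eqref{cova-xi-n-xi-n}--\eqref{cova-i-n-i-n}.
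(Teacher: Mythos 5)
Your proof is correct. The paper offers no proof here — it simply asserts that the spatial identities \eqref{cova-xi-n-xi-n-spa}--\eqref{cova-i-n-i-n-spa} "immediately give way" from the space-time ones — so there is no argument to compare against; you are filling in a gap the authors treat as routine. Your spectral derivation of the bilinear covariance form of $\dw$ from the one-dimensional fBm representations, and the passage to $\dw^{n}$ via $\dw^{n}(y)=\dw(\rho_{n}(y-\cdot))$, are the standard route and are carried out correctly. The one point that genuinely needs care in the spatial case — reducing the space-time convolution $(K\ast\dw^{n})$ to the spatial convolution $(\kti\ast\dw^{n})$ by exploiting that $\dw^{n}$ is time-independent, and identifying $\cfs\kti(\xi)=\cf K(0,\xi)$ via $K(s,\cdot)=0$ for $s\le 0$ — you handle explicitly and correctly, and the finiteness and Fubini justifications (local integrability of $\cn_{\bh}$ from $H_i<1$, decay at infinity from \eqref{asympt-rho-spa} with $\tau_i=1$ and $H_i>0$, boundedness of $\cfs\kti$ from $\kti\in L^{1}$) are all sound.
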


\subsection{Moment estimate for the first component} 

Morally, we need to check that the result of Proposition \ref{prop:estim-mom-first} still holds for $H_0=1$. In a more rigorous way, one has here:

\begin{proposition}\label{prop:estim-mom-first-spa}
For all $\ell\geq 0$, $n\geq m\geq 0$, $\psi\in \cb^{2(d+1)}_\scal$ and $x\in \R^{d}$, it holds that
\begin{align}\label{eq:dw^n-dw^m-spa}
\mathbb{E}\big[ |\langle \dw^{n}-\dw^m,\psiti^\ell_{x} \rangle |^{2}\big] \lesssim 2^{2\ell (d-H+\varepsilon)} 2^{-m\varepsilon}\, ,
\end{align}
where $\psiti(x):=\int_{\R}ds \, \psi(s,x)$, $\psiti^\ell_{x}(y):=2^{\ell d}\psiti(2^\ell (y-x))$, and the proportional constant in $\lesssim$ does not depend on $n,m,\ell,s,x$. 
\end{proposition}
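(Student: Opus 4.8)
The plan is to mimic closely the proof of Proposition~\ref{prop:estim-mom-first}, with two substitutions: the space-time mollifier and noise are replaced by their spatial counterparts \eqref{def:approximate noise-spa}, and the test function $\psi$ on $\R^{d+1}$ is integrated out in the time variable to produce $\psiti$. Concretely, I would first note that by the definition of the pairing and the covariance formula \eqref{cova-xi-n-xi-n-spa},
\begin{align*}
\mathbb{E}\big[ \langle \dw^{n},\psiti^\ell_{x} \rangle^{2}\big]
=c_{\bh}^2 \int_{\R^{d}} d\xi\, |\cfs\rho_n(\xi)|^2 \big|\cfs \psiti^\ell_{x}(\xi)\big|^2 \cn_{\bh}(\xi)\,,
\end{align*}
exactly as in \eqref{eq:test W^n1}, where the key point is that $\langle \dw^n,\psiti^\ell_x\rangle$ only involves $\dw^n$ through its spatial behaviour, so pairing with $\psiti$ (the time-integral of $\psi$) is legitimate and matches the distribution $\dw^n=\dw^n(x)$.

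Next I would unwind the scalings. Since $\rho_n(x)=2^{dn}\rho(2^nx)$ one has $\cfs\rho_n(\xi)=\cfs\rho(2^{-n}\xi)$, and since $\psiti^\ell_x(y)=2^{\ell d}\psiti(2^\ell(y-x))$ one has $|\cfs\psiti^\ell_x(\xi)|=|\cfs\psiti(2^{-\ell}\xi)|$. Performing the change of variables $\xi\mapsto 2^\ell\xi$ and using $\cn_{\bh}(2^\ell\xi)=2^{\ell(d-2H)}\cn_{\bh}(\xi)$ (recall $\cn_\bh(\xi)=\prod_i|\xi_i|^{1-2H_i}$ and $\sum_i(1-2H_i)=d-2H$) yields
\begin{align*}
\mathbb{E}\big[ \langle \dw^{n},\psiti^\ell_{x} \rangle^{2}\big]
=c_{\bh}^2\, 2^{2\ell(d-H)}\int_{\R^{d}} d\xi\, |\cfs\rho(2^{-(n-\ell)}\xi)|^2 \big|\cfs \psiti(\xi)\big|^2 \cn_{\bh}(\xi)\,.
\end{align*}
Using Assumption~$(\rho)$ with all $\tau_i=0$ bounds $\cfs\rho$ uniformly, so it remains to check that $\int_{\R^d} d\xi\,\cn_{\bh}(\xi)|\cfs\psiti(\xi)|^2<\infty$. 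This is the spatial analogue of Lemma~\ref{lem:fouri-psi-prop}: near the origin the singularity $\prod_i|\xi_i|^{1-2H_i}$ is integrable since each $1-2H_i>-1$, and at infinity $\cfs\psiti$ decays faster than any polynomial because $\psiti$ is smooth with compact support (inherited from $\psi\in\cb_\scal^{2(d+1)}$, after integrating in time over the compact time-slice of its support). This gives \eqref{eq:dw^n-dw^m-spa} for $m=0$.

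Finally, for the general case $n\ge m\ge 0$ I would argue exactly as in the last sentence of the proof of Proposition~\ref{prop:estim-mom-first}: write $\dw^n-\dw^m$ and use that $\cfs\rho_n(\xi)-\cfs\rho_m(\xi)=\cfs\rho(2^{-n}\xi)-\cfs\rho(2^{-m}\xi)$, which by the Lipschitz property of $\cfs\rho$ (Assumption~$(\rho)(ii)$) is bounded by $C\,2^{-m}|\xi|\wedge 1$; combined with the decay of $\cfs\psiti$ this produces the extra factor $2^{-m\varepsilon}$ after the same change of variables, completing the estimate. The only mild obstacle here, compared with the space-time case, is making the time-integration step rigorous — i.e.\ checking that $\langle\dw^n-\dw^m,\psi^\ell_{s,x}\rangle$ does not depend on $s$ and reduces to pairing $\dw^n-\dw^m$ against $\psiti^\ell_x$ — but this is immediate from \eqref{def:approximate noise-spa} and Fubini, since $\dw^n$ is a function of the spatial variable only; everything else is a line-by-line transcription of the computations in \eqref{eq:test W^n1}--\eqref{proof-mom-1}.
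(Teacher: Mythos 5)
Your proof is correct and follows essentially the same route as the paper: the paper's proof of Proposition~\ref{prop:estim-mom-first-spa} simply says to repeat the argument of Proposition~\ref{prop:estim-mom-first} with the spatial covariance formula~\eqref{cova-xi-n-xi-n-spa} in place of~\eqref{cova-xi-n-xi-n}, which is precisely what you do, down to the scaling of $\cn_{\bh}$, the uniform bound on $\cfs\rho$, the spatial analogue of Lemma~\ref{lem:fouri-psi-prop}, and the Lipschitz trick for general $m$.
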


\begin{proof}
It suffices to follow the arguments of the proof of Proposition \ref{prop:estim-mom-first}, and therein replace identity~\eqref{cova-xi-n-xi-n} with the covariance formula \eqref{cova-xi-n-xi-n-spa}.  
\end{proof}

\subsection{Moment estimate for the second component}

The preliminary estimates on $\cf K$ and $\cf R$ (i.e., Lemmas \ref{lem:estim-fourier-k} and \ref{lem:estim-fouri-r}) become estimates on $\cfs \kti$ and $\int_0^\infty ds \, \cfs R(s,.)$ in the spatial setting. Just as their space-time counterparts, these bounds follow from the analysis of the expansions contained in \eqref{decompo-k}.

\begin{lemma}\label{lem:estim-fourier-kti} 
Let $K$ be the localized heat kernel of Definition \ref{defi:loc-heat-ker}, and define $\kti$ along \eqref{kernel-k-tilde}. 
For all fixed $a_1,\ldots,a_d \in [0,1]$ such that $\sum_{i=1}^d a_i<1$, one has, for every $\xi\in \R^{d}$,
$$|\cfs \kti(\xi)|\lesssim \prod_{i=1}^d |\xi_{i}|^{-2a_i} \ .$$
\end{lemma}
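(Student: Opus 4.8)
The plan is to mimic closely the proof of Lemma~\ref{lem:estim-fourier-k}, simply replacing the space-time heat kernel $K$ by its spatial integral $\kti$ defined in \eqref{kernel-k-tilde}, and tracking how the scaling behaves. First I would start from the decomposition \eqref{decompo-k} of $K$ and integrate it in the time variable: writing
\begin{equation*}
\kti(x)=\int_0^\infty ds\, K(s,x)=\sum_{\ell\geq 0} 2^{-2\ell}\int_0^\infty ds\,(\cs_{0,0}^{2^{-\ell}}K_0)(s,x),
\end{equation*}
and using \eqref{scaling-operator}, a change of variables $s=2^{-2\ell}\sigma$ in the time integral produces
\begin{equation*}
\kti(x)=\sum_{\ell\geq 0} 2^{-2\ell} \, 2^{-\ell d}\,\widetilde{K_0}\big(2^{-\ell}x\big),
\qquad \widetilde{K_0}(x):=\int_0^\infty d\sigma\, K_0(\sigma,x)\,.
\end{equation*}
Here $\widetilde{K_0}$ is a smooth compactly-supported function on $\R^d$ (its support sits in $[-1,1]^d$ by the support property of $K_0$). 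Taking the spatial Fourier transform and using $\cfs(2^{-\ell d}\widetilde{K_0}(2^{-\ell}\cdot))(\xi)=\cfs\widetilde{K_0}(2^{-\ell}\xi)$, I obtain the spatial analogue of \eqref{proof midstep FK}, namely
\begin{equation*}
\cfs\kti(\xi)=\sum_{\ell\geq 0} 2^{-2\ell}\,\cfs\widetilde{K_0}\big(2^{-\ell}\xi\big)\,.
\end{equation*}

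Next I would interpolate exactly as in the proof of Lemma~\ref{lem:estim-fourier-k}. Since $\widetilde{K_0}$ is smooth and compactly supported, for every $\tau_1,\dots,\tau_d\geq 0$ one has $|\cfs\widetilde{K_0}(\xi)|\lesssim |\xi_i|^{-\tau_i}$ for each $i$, hence also $|\cfs\widetilde{K_0}(\xi)|\lesssim 1$. Writing $|\cfs\widetilde{K_0}(2^{-\ell}\xi)|=\prod_{i=1}^d |\cfs\widetilde{K_0}(2^{-\ell}\xi)|^{a_i}\cdot|\cfs\widetilde{K_0}(2^{-\ell}\xi)|^{1-\sum a_i}$ and bounding the first $d$ factors by $|2^{-\ell}\xi_i|^{-2a_i}=2^{2\ell a_i}|\xi_i|^{-2a_i}$ and the last factor by $1$, I get
\begin{equation*}
|\cfs\kti(\xi)|\lesssim \Big(\prod_{i=1}^d |\xi_i|^{-2a_i}\Big)\sum_{\ell\geq 0} 2^{-2\ell(1-(a_1+\cdots+a_d))}\,,
\end{equation*}
and the geometric series converges precisely because $\sum_{i=1}^d a_i<1$. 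This yields the claimed bound $|\cfs\kti(\xi)|\lesssim \prod_{i=1}^d|\xi_i|^{-2a_i}$.

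The only genuinely new point compared with the space-time case — and the step I would be most careful about — is the bookkeeping of the extra factor $2^{-\ell d}$ coming from the $d$-dimensional rescaling inside the time integral, and the verification that $\widetilde{K_0}(x)=\int_0^\infty K_0(\sigma,x)\,d\sigma$ is indeed smooth with the right support (so that the interpolation bounds $|\cfs\widetilde{K_0}(\xi)|\lesssim|\xi_i|^{-\tau_i}$ are legitimate); this is immediate from dominated convergence and the compact support of $K_0$, but it is the place where the argument genuinely differs from Lemma~\ref{lem:estim-fourier-k}. Everything else is a verbatim transcription. Note also that the scaling exponents work out consistently: the $\ell$-th term scales like $2^{-2\ell}$ exactly as before, so the convergence threshold $\sum a_i<1$ is unchanged, which is the reason the statement of Lemma~\ref{lem:estim-fourier-kti} has the same shape as that of Lemma~\ref{lem:estim-fourier-k} with the time exponent $a_0$ simply removed (morally ``$a_0=0$'', consistent with the heuristic $H_0=1$).
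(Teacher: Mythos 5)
Your approach is the right one and matches what the paper intends (it merely remarks that Lemma~\ref{lem:estim-fourier-kti} ``follows from the analysis of the expansions contained in \eqref{decompo-k}'' just as Lemma~\ref{lem:estim-fourier-k} does). However, two compensating sign errors creep into your intermediate display. From \eqref{scaling-operator}, $(\cs^{2^{-\ell}}_{0,0}K_0)(s,x)=2^{\ell(d+2)}K_0(2^{2\ell}s,2^{\ell}x)$, so with the substitution $s=2^{-2\ell}\sigma$ one gets
\begin{equation*}
\kti(x)=\sum_{\ell\geq 0}2^{-2\ell}\cdot 2^{\ell d}\,\widetilde{K_0}\big(2^{\ell}x\big),
\end{equation*}
not $2^{-2\ell}\cdot 2^{-\ell d}\widetilde{K_0}(2^{-\ell}x)$ as you wrote. (A quick sanity check: your version would make $\kti$ bounded near $x=0$, while the true $\kti$ diverges there.) Your subsequent Fourier-scaling identity then compensates by having the opposite sign error, since in fact $\cfs\big(2^{\ell d}\widetilde{K_0}(2^{\ell}\cdot)\big)(\xi)=\cfs\widetilde{K_0}(2^{-\ell}\xi)$. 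The net formula $\cfs\kti(\xi)=\sum_{\ell\geq0}2^{-2\ell}\cfs\widetilde{K_0}(2^{-\ell}\xi)$ that you land on is therefore correct, and the ensuing interpolation (using $|\cfs\widetilde{K_0}(\eta)|\lesssim|\eta_i|^{-2}$ for each $i$ together with boundedness, and $\sum a_i<1$ for the geometric series) is exactly right.

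A way to sidestep the rescaling computation entirely: by Definition~\ref{defi:loc-heat-ker}(ii), $K(s,\cdot)\equiv0$ for $s\leq0$, so $\kti(x)=\int_{\R}ds\,K(s,x)$ and hence $\cfs\kti(\xi)=\cf K(0,\xi)$. The claim is then literally the statement of Lemma~\ref{lem:estim-fourier-k} applied with $a_0=0$ (so $|\la|^{-a_0}\equiv1$) evaluated at $\la=0$, with the hypothesis $\sum_{i=0}^d a_i<1$ reducing to $\sum_{i=1}^d a_i<1$. This makes the ``$a_0=0$ heuristic'' you mention at the end completely rigorous and zero-cost.
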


\begin{lemma}\label{lem:estim-fouri-r-spa}
Let $R$ be the remainder term associated with the localized heat kernel $K$ (along Definition \ref{defi:loc-heat-ker}). Then, for all fixed $a_1,\ldots,a_d \geq 0$ such that $\sum_{i=1}^d a_i> 1$, one has, for every $\xi\in \R^{d+1}$,
\begin{equation}\label{estim-fouri-r-spa}
\bigg|\int_0^\infty ds \, \cfs R(s,.)(\xi)\bigg|\lesssim \prod_{i=1}^d |\xi|^{-2a_i} \ .
\end{equation}
As a consequence, if $\bh=(H_1,\ldots,H_d)\in (0,1)^{d}$ is such that $H < d-1$, it holds that
\begin{equation}\label{integr-r-spa}
\int_{\R^{d}} d\xi\, \cn_{\bh}(\xi) \bigg|\int_0^\infty ds \, \cfs R(s,.)(\xi)\bigg| < \infty .
\end{equation}
\end{lemma}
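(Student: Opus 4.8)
The plan is to mirror, in the spatial setting, the two-part structure of Lemma~\ref{lem:estim-fouri-r}: first establish the pointwise Fourier bound \eqref{estim-fouri-r-spa} on $\int_0^\infty ds\, \cfs R(s,\cdot)$, then deduce the integrability statement \eqref{integr-r-spa}. For the first part, I start from the expansion of $R$ in the second identity of \eqref{decompo-k}, namely $R(s,x)=\sum_{\ell<0} 2^{-2\ell}(\cs_{0,0}^{2^{-\ell}}K_0)(s,x)$, and apply the spatial Fourier transform followed by the time integration. Using the scaling relation \eqref{scaling-operator} one gets, for each dyadic block, $\int_0^\infty ds\, \cfs(\cs_{0,0}^{2^{-\ell}}K_0)(s,\cdot)(\xi)$ equal to a rescaled version of $\int_0^\infty ds\, \cfs K_0(s,\cdot)$ evaluated at $2^\ell \xi$, up to an explicit power of $2$; reindexing $\ell<0$ as a positive sum, this produces a series of the form $\sum_{\ell>0} 2^{2\ell}\, g(2^{2\ell}, 2^\ell\xi)$ where $g$ is the partially-integrated $K_0$. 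Since $K_0$ is smooth and compactly supported in $[-1,1]^{d+1}$, the function $\xi\mapsto \int_0^\infty ds\, \cfs K_0(s,\cdot)(\xi)$ is smooth with rapidly decaying derivatives, so it admits bounds $\lesssim |\xi_i|^{-\tau_i}$ for every $\tau_i\geq 0$ and every $i$. Interpolating these bounds with exponents $a_i$ and summing the geometric series $\sum_{\ell>0} 2^{2\ell(1-(a_1+\cdots+a_d))}$, which converges precisely because $\sum a_i>1$, yields \eqref{estim-fouri-r-spa}. This is the exact spatial analogue of the argument in the proof of Lemma~\ref{lem:estim-fouri-r}, with the key difference that the time variable is integrated out rather than carrying its own Fourier exponent; this is why one only sees the spatial exponents $a_i$ and the threshold is $\sum a_i>1$ rather than $\sum_{i=0}^d a_i>1$.

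For the integrability claim \eqref{integr-r-spa}, the plan is to again split $\R^d$ into a neighbourhood of the origin and its complement, using $R=p-K$ and hence $\int_0^\infty ds\, \cfs R(s,\cdot)=\int_0^\infty ds\, \cfs p_s - \cfs\kti$ (the latter using $\kti(x)=\int_0^\infty K(s,x)\,ds$ from \eqref{kernel-k-tilde}). Near the origin, $\cfs\kti$ is bounded by Lemma~\ref{lem:estim-fourier-kti} (applied with $a_i=0$) and $\int_0^\infty ds\, \cfs p_s(\xi)$ is controlled exactly as in the proof of Lemma~\ref{lem:cstt-renorm-spa} --- indeed the finiteness of $\int_{\R^d} |\cfs\rho(\xi)|^2\cn_\bh(\xi)(\int_0^\infty ds\,\cfs p_s(\xi))\,d\xi$ established there, combined with the fact that $\cfs\rho$ is uniformly bounded and bounded below near $0$, gives local integrability of $\cn_\bh\cdot(\int_0^\infty ds\,\cfs p_s)$. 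On the complement of a ball, one uses the pointwise bound \eqref{estim-fouri-r-spa} just proved, choosing $a_1,\ldots,a_d$ with $\sum a_i>1$, $a_i<1$, and $2H_i+2a_i>1$ for each $i$; this last condition forces $\sum(2H_i+2a_i)>d$, i.e.\ $2H+2\sum a_i>d$, which is compatible with $\sum a_i$ slightly above $1$ precisely under the standing hypothesis $H<d-1$ (so that $2H+2<2d$ leaves room). Under these choices the integral $\int_{|\xi|\geq 1}\cn_\bh(\xi)\prod_i|\xi_i|^{-2a_i}\,d\xi=\int_{|\xi|\geq 1}\prod_i|\xi_i|^{-(2H_i-1+2a_i)}\,d\xi$ converges coordinatewise. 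Assembling the two regions gives \eqref{integr-r-spa}.

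The main obstacle is the bookkeeping at the origin: unlike the space-time case where the $\la$-integration near zero is harmless, here integrating $\cfs p_s(\xi)=e^{-s|\xi|^2/2}$ over $s\in(0,\infty)$ produces a factor $\sim |\xi|^{-2}$, which is genuinely singular at $\xi=0$ and must be absorbed by $\cn_\bh$. Checking that $\cn_\bh(\xi)|\xi|^{-2}=\prod_i|\xi_i|^{-(2H_i-1)}\cdot|\xi|^{-2}$ is locally integrable near the origin in $\R^d$ requires $\sum_i(2H_i-1)+2<d$, i.e.\ again $H<d-1$; so the hypothesis \eqref{defi:h-scal-spa} is exactly what makes this work, and the delicate point is verifying this local integrability rather than hand-waving it (in practice, one can isolate the worst coordinate or pass to polar-type coordinates, but care is needed since the $|\xi_i|^{-(2H_i-1)}$ factors are anisotropic). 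Once this local estimate is in place, the rest of the proof is routine and parallels Section~\ref{subsec:proof-renorm-cstt-tech} and the proof of Lemma~\ref{lem:estim-fouri-r} line by line.
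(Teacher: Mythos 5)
Your proof of the pointwise bound \eqref{estim-fouri-r-spa} is correct and follows the same scheme as the space-time case (Lemma~\ref{lem:estim-fouri-r}): after integrating out the time variable, the expansion of $R$ in \eqref{decompo-k} yields $\int_0^\infty ds\,\cfs R(s,\cdot)(\xi)=\sum_{\ell>0}2^{2\ell}\cfs\kti_0(2^\ell\xi)$ with $\kti_0(x):=\int_0^1 K_0(s,x)\,ds$ smooth and compactly supported, and the geometric series $\sum_{\ell>0}2^{2\ell(1-\sum a_i)}$ converges exactly when $\sum_i a_i>1$. Your treatment of the region $\{|\xi|\leq 1\}$ for \eqref{integr-r-spa} is also fine: writing $R=p-K$, using Lemma~\ref{lem:estim-fourier-kti} with $a_i=0$ for the $\kti$-part, and observing that $\int_0^\infty\cfs p_s(\xi)\,ds=2/|\xi|^2$ contributes a homogeneous singularity of degree $d-2H-2$, which is locally integrable precisely when $H<d-1$, is the right argument, and you flag the anisotropy of $\cn_\bh$ appropriately.

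There is, however, a genuine gap in your treatment of the complementary region $\{|\xi|\geq 1\}$. You propose fixing one set of exponents $a_1,\dots,a_d$ with $\sum a_i>1$, $a_i<1$, and $2H_i+2a_i>1$, and claim that $\int_{|\xi|\geq 1}\prod_i|\xi_i|^{-(2H_i-1+2a_i)}\,d\xi$ then ``converges coordinatewise.'' First, the exponent condition you impose, $2H_i+2a_i>1$, only gives $2H_i-1+2a_i>0$; for the coordinate integral $\int_{|\xi_i|\geq 1}|\xi_i|^{-(2H_i-1+2a_i)}\,d\xi_i$ to converge one needs $2H_i-1+2a_i>1$, i.e.\ $a_i>1-H_i$. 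Second, and more fundamentally, no single fixed choice of $a_i$ can work on the whole of $\{|\xi|\geq 1\}$: local integrability of $|\xi_i|^{-(2H_i-1+2a_i)}$ near the hyperplane $\{\xi_i=0\}$ (which does intersect $\{|\xi|\geq 1\}$) requires $2H_i-1+2a_i<1$, while decay at infinity in $\xi_i$ requires $2H_i-1+2a_i>1$; these cannot hold for the same $a_i$. The set $\{|\xi|\geq 1\}$ is not a product domain, so ``converges coordinatewise'' is not a legitimate conclusion. The fix is the same region decomposition used for $\cj_\infty$ in the proof of Lemma~\ref{lem:cstt-renorm} (see \eqref{def: decompose J_infty}--\eqref{bound:cj_infty0}): write $\{|\xi|\geq 1\}\subset\bigcup_{i=1}^d\{|\xi_i|\geq 1/\sqrt d\}$, and on each piece choose $a_i$ large (say $a_i>1$, guaranteeing both $\sum a_j>1$ and decay $2H_i-1+2a_i>1$) while setting $a_j=0$ for $j\neq i$, so that the factors $|\xi_j|^{-(2H_j-1)}$ are locally integrable near $\{\xi_j=0\}$ and, on $\{|\xi_j|\geq 1\}$, one further splits and repeats. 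With this adjustment the far region is controlled, and the hypothesis $H<d-1$ is then only needed for the near-origin estimate, which matches the structure of the space-time proof.
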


\smallskip

A similar decomposition to \eqref{decomp EW^2n_sx} can also be exhibited in this time-independent situation. 

\begin{lemma}\label{lem:decompo-esp-xi-2-spa} 
Let $\bw^{\mathbf{2},n}$ be the increment given by~\eqref{notation-xi-2-spa}, and recall that the renormalization constant $\mathfrak{c}^{(n)}_{\rho,\bh}$ is defined by \eqref{renormal-spa}. Then for all $x,y\in \R^{d}$ and $n\geq 1$, one has the decomposition
\begin{align}\label{decomp EW^2n_sx-spa}
\mathbb{E}\big[ \bw^{\mathbf{2},n}_{x}(y)\big]=\mathfrak{c}^{(n)}_{\rho,\bh}+\ce^n_{x}(y) \, ,
\end{align}
for some function $\ce^n_{x}$ such that for all $\varepsilon\in (0,1)$, $\ell\ge 0$ and  $\psi\in\cb_\scal^\ell$, we have
\begin{equation}\label{estim-mathcal-e-spa}
\big|\langle \ce^n_{x},\psiti^\ell_{x}\rangle \big| \lesssim  2^{2\ell (d-H-1+\varepsilon)} \, .
\end{equation}
Moreover, in relation \eqref{estim-mathcal-e-spa} the proportional constant does not depend on $n,\ell,x$.
\end{lemma}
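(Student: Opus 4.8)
The plan is to follow the proof of Lemma~\ref{lem:decompo-esp-xi-2} almost verbatim, replacing the space-time Fourier analysis by its spatial counterpart; morally this amounts to setting $H_0=1$, so that the order-$(-1)$ time-Fourier variable is traded for the factor $\int_0^\infty ds$ acting on the heat semigroup. First I would set, exactly as in~\eqref{eq:Epsi^n_sx},
\[
\ce^n_x(y):=\Big\{\mathbb{E}\big[(\kti\ast\dw^n)(y)\,\dw^n(y)\big]-\mathfrak{c}^{(n)}_{\rho,\bh}\Big\}-\mathbb{E}\big[(\kti\ast\dw^n)(x)\,\dw^n(y)\big],
\]
which makes the decomposition~\eqref{decomp EW^2n_sx-spa} hold by the very definition~\eqref{notation-xi-2-spa} of $\bw^{\mathbf{2},n}_x$. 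Since $\dw^n$ is stationary, the first bracket equals a constant $Q^n-\mathfrak{c}^{(n)}_{\rho,\bh}$ with $Q^n:=\mathbb{E}[(\kti\ast\dw^n)(y)\dw^n(y)]$; hence $\langle\ce^n_x,\psiti^\ell_x\rangle$ splits into $(Q^n-\mathfrak{c}^{(n)}_{\rho,\bh})\int_{\R^d}\psiti^\ell_x$ and the pairing of $Q^n(x;\cdot):=\mathbb{E}[(\kti\ast\dw^n)(x)\dw^n(\cdot)]$ with $\psiti^\ell_x$, and it remains to bound these two contributions.

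For the off-diagonal contribution, a slight variation of~\eqref{cova-xi-n-xi-n-spa}--\eqref{cova-i-n-i-n-spa} gives $Q^n(x;y)=c_{\bh}^2\int_{\R^d}|\cfs\rho_n(\xi)|^2\cn_{\bh}(\xi)\cfs\kti(\xi)\,e^{\imath\xi\cdot(x-y)}\,d\xi$. Pairing against $\psiti^\ell_x(y)=2^{\ell d}\psiti(2^\ell(y-x))$ (the phases $e^{\pm\imath\xi\cdot x}$ cancel) and performing the change of variables $\xi\mapsto 2^\ell\xi$, the identity $\cn_{\bh}(2^\ell\xi)=2^{-\ell(2H-d)}\cn_{\bh}(\xi)$ produces the prefactor $2^{2\ell(d-H)}$. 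I would then invoke Lemma~\ref{lem:estim-fourier-kti} with parameters $a_1,\dots,a_d\in[0,1]$ satisfying $\sum_i a_i=1-\varepsilon$ and $a_i<1-H_i$ for every $i$, which is possible because $\sum_i(1-H_i)=d-H\ge1>1-\varepsilon$ by~\eqref{strengthened 4.17-spa} (this is where $d\ge 2$ enters); this yields a factor $2^{-2\ell(1-\varepsilon)}$ together with $\prod_i|\xi_i|^{-2a_i}$, so that
\[
\big|\langle Q^n(x;\cdot),\psiti^\ell_x\rangle\big|\lesssim 2^{2\ell(d-H-1+\varepsilon)}\int_{\R^d}\cn_{\bh}(\xi)\prod_{i=1}^d|\xi_i|^{-2a_i}\,|\cfs\psiti(\xi)|\,d\xi,
\]
and the last integral is finite by the spatial analogue of Lemma~\ref{lem:fouri-psi-prop}, the conditions $2a_i+2H_i-1<1$ being exactly what was arranged above.

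For the constant term, I would use $Q^n=c_{\bh}^2\int_{\R^d}|\cfs\rho_n(\xi)|^2\cn_{\bh}(\xi)\cfs\kti(\xi)\,d\xi$ together with $p_s=K(s,\cdot)+R(s,\cdot)$ and $\int_0^\infty\cfs p_s(\xi)\,ds=2/|\xi|^2$, so that $\int_0^\infty\cfs R(s,\cdot)(\xi)\,ds=2/|\xi|^2-\cfs\kti(\xi)$. When $H<d-1$, a scaling argument recasts $\mathfrak{c}^{(n)}_{\rho,\bh}$ as $c_{\bh}^2\int_{\R^d}|\cfs\rho_n(\xi)|^2\cn_{\bh}(\xi)\big(\int_0^\infty\cfs p_s(\xi)ds\big)d\xi$, whence $Q^n-\mathfrak{c}^{(n)}_{\rho,\bh}=-c_{\bh}^2\int_{\R^d}|\cfs\rho_n|^2\cn_{\bh}(\xi)\big(\int_0^\infty\cfs R(s,\cdot)(\xi)ds\big)d\xi$, bounded by $\lesssim1\le 2^{2\ell(d-H-1+\varepsilon)}$ thanks to $|\cfs\rho_n|\lesssim1$ and~\eqref{integr-r-spa}. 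When $H=d-1$ the same scaling only shifts the cutoff to $\{|\xi|\ge1\}$, so that $Q^n-\mathfrak{c}^{(n)}_{\rho,\bh}=c_{\bh}^2\int_{|\xi|\le1}|\cfs\rho_n|^2\cn_{\bh}\cfs\kti\,d\xi-c_{\bh}^2\int_{|\xi|\ge1}|\cfs\rho_n|^2\cn_{\bh}\big(\int_0^\infty\cfs R(s,\cdot)ds\big)d\xi$; the first integral is $\lesssim\int_{|\xi|\le1}\cn_{\bh}(\xi)d\xi<\infty$ after bounding $\cfs\kti$ by a constant via Lemma~\ref{lem:estim-fourier-kti} with $a_i=0$, and the second is bounded uniformly as well. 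Combined with the trivial bound $|\int_{\R^d}\psiti^\ell_x|=|\int_{\R^{d+1}}\psi|\lesssim1$, this gives~\eqref{estim-mathcal-e-spa}.

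The hard part is this last integral in the critical regime, namely the finiteness of $\int_{|\xi|\ge1}\cn_{\bh}(\xi)\big|\int_0^\infty\cfs R(s,\cdot)(\xi)ds\big|d\xi$ — the spatial analogue of the bound on $\int_{\R^{d+1}\setminus\cd_\scal}\cn_{H_0,\bh}|\cf R|$ hidden in the proof of Lemma~\ref{lem:estim-fouri-r}. A naive use of $|\int_0^\infty\cfs R(s,\cdot)(\xi)ds|\lesssim\prod_i|\xi_i|^{-2a_i}$ (valid for $\sum_i a_i>1$, Lemma~\ref{lem:estim-fouri-r-spa}) fails, since controlling the singularities along the coordinate hyperplanes would force $a_i<1-H_i$, hence $\sum_i a_i<d-H=1$, against $\sum_i a_i>1$. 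As in the $\cd_\scal$-splitting of Lemma~\ref{lem:estim-fouri-r}, the remedy is to split $\{|\xi|\ge1\}$ into the finitely many product regions $A_S:=\{|\xi_i|\ge1\ (i\in S)\}\cap\{|\xi_i|<1\ (i\notin S)\}$, $\emptyset\neq S\subseteq\{1,\dots,d\}$, and on each $A_S$ apply Lemma~\ref{lem:estim-fouri-r-spa} with $a_i=0$ for $i\notin S$ (so integrability there comes from $\cn_{\bh}$ alone, as $2H_i-1<1$) and $a_i>1-H_i$ for $i\in S$ with $\sum_i a_i>1$ (possible because $\sum_{i\in S}(1-H_i)\le d-H=1$); on each $A_S$ one is then left with a finite product of one-dimensional integrals. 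All the remaining verifications — the scaling identities and the elementary estimates $|\cfs\rho_n|\lesssim1$ and the fast decay of $\cfs\psiti$ — are routine and parallel the space-time case.
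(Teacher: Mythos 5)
Your proof follows the same route as the paper's: define $\ce^n_x(y)$ exactly as in the space-time case with $\tilde{Q}^n$ in place of $Q^n$, bound the off-diagonal pairing by a $2^{2\ell(d-H)}$-scaling plus Lemma~\ref{lem:estim-fourier-kti} with $\sum a_i = 1-\varepsilon$ and $a_i < 1-H_i$, and handle the constant term via the two cases $H<d-1$ and $H=d-1$ using the decomposition $\cfs\kti = \int_0^\infty\cfs p_s\,ds - \int_0^\infty\cfs R(s,\cdot)\,ds$. All your scaling identities, the role of $d\geq2$, and the choice of parameters match the paper's argument.

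Where you genuinely add something is the final paragraph. The paper simply asserts, in the critical case $H=d-1$, that $\int_{|\xi|\geq 1}\cn_{\bh}(\xi)\,|\int_0^\infty\cfs R(s,\cdot)(\xi)\,ds|\,d\xi\lesssim 1$ ``using the results of Lemma~\ref{lem:estim-fourier-kti} and Lemma~\ref{lem:estim-fouri-r-spa}'' without spelling out why; and as you correctly observe, a naive one-parameter application of Lemma~\ref{lem:estim-fouri-r-spa} cannot simultaneously satisfy $\sum a_i > 1$ (needed for the bound on $\int_0^\infty\cfs R$) and $a_i < 1-H_i$ for all $i$ (needed to integrate $\cn_{\bh}$ near the coordinate hyperplanes), because $\sum_i(1-H_i)=d-H=1$ exactly at the critical point. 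Your remedy — partitioning $\{|\xi|\geq1\}$ into the $2^d-1$ product regions $A_S$ and choosing $a_i=0$ for $i\notin S$ and $a_i=(1-H_i)+\delta$ for $i\in S$ — is correct: on each $A_S$ the $\xi_i$-integrals for $i\notin S$ converge at $0$ because $2H_i-1<1$, and those for $i\in S$ converge at infinity because $2H_i-1+2a_i=1+2\delta>1$, while $\sum a_i = \sum_{i\in S}(1-H_i)+|S|\delta>1$ can always be arranged. This is precisely the kind of region-splitting implicit in the paper's space-time Lemma~\ref{lem:estim-fouri-r}, and you have made it explicit where the paper leaves it to the reader. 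The proof is sound.
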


\begin{proof}
We mimic the proof of Lemma \ref{lem:decompo-esp-xi-2}. First, one can of course write
$$
\mathbb{E}\big[ \bw^{\mathbf{2},n}_{x}(y)\big]=\mathfrak{c}^{(n)}_{\rho,\bh}+\ce^n_{x}(y),
$$ 
with
\begin{align}\label{eq:Epsi^n_sx-spa}
\ce^n_{x}(y):=\Big\{\tilde{Q}^n(y;y)-\mathfrak{c}^{(n)}_{\rho,\bh} \Big\}-\tilde{Q}^n(x;y) \quad \text{and} \quad \tilde{Q}^n(x;y):=\mathbb{E}\big[(\kti\ast \dw^n)(x) \dw^n(y) \big] \, .
\end{align}

On the one hand, using \eqref{cova-xi-n-xi-n-spa}-\eqref{cova-i-n-i-n-spa}, and along the same lines as for \eqref{first-bo}, we get
\begin{align*}
&\bigg| \int_{\R^{d}} dy\, \tilde{Q}^n(x;y)\psiti^\ell_{x}(y) \bigg| =c_{\bh}^2\, 2^{2\ell (d-H)} \bigg|\int_{\R^{d}} d\xi\, |\cfs\rho_n(2^\ell\xi)|^2 \cn_{\bh}(\xi)\cfs \kti(2^\ell\xi)\cfs\psiti(\xi) \bigg| \, .
\end{align*}
Since $H \leq d-1$, we can pick $a_1,\ldots,a_d$ in $[0,1]$ such that $\sum_{i=1}^d a_i=1-\varepsilon$ and $2H_i+2a_i-1<1$ for $i=1,\ldots,d$. Applying Lemma \ref{lem:estim-fourier-kti} with these parameters and invoking the inequality $|\cfs\rho_n(\xi)| \lesssim 1$, we deduce
\begin{align*}
&\bigg| \int_{\R^{d}} dy\, \tilde{Q}^n(x;y)\psiti^\ell_{x}(y) \bigg| \lesssim 2^{2\ell (d-H-1+\varepsilon)} \int_{\R^{d}} d\xi\, \prod_{i=1}^d \frac{1}{|\xi_i|^{2H_i+2a_i-1}} \big| \cfs\psiti(\xi) \big|\lesssim2^{2\ell (d-H-1+\varepsilon)}  \, .
\end{align*}

\smallskip

Then, to bound the difference $\tilde{Q}^n(y;y)-\mathfrak{c}^{(n)}_{\rho,\bh}$ in \eqref{eq:Epsi^n_sx-spa}, consider the two possible situations for $H$.

\smallskip

\noindent
\underline{First case: $H<d-1$.} In this case, going back to the definition \eqref{eq:cj-c-n-spa} of $\cj_{\rho,\bh}$, we can write
\begin{equation}\label{recall-cstt-renorm-spa}
\mathfrak{c}^{(n)}_{\rho,\bh}=2^{2n(d-H-1)}c_{\bh}^2 \cj_{\rho,\bh} =c_{H_0}^2 \int_{\R^{d}}  |\mathcal{F}^{\textsc s}{\rho_n}(\xi)|^2   \cn_{\bh}(\xi)\bigg( \int_0^\infty ds\, \mathcal{F}^{\textsc s} p_s(\xi)\bigg)\, d\xi \, .
\end{equation}
Besides, using \eqref{decompo-k}, it holds that
\begin{equation}\label{decompo-fouri-kti}
\cfs \kti(\xi)=\int_0^\infty ds \, \cfs K(s,.)(\xi)=\int_0^\infty ds \, \cfs p_s(\xi)-\int_0^\infty ds \, \cfs R(s,.)(\xi),
\end{equation}
and so, in light of \eqref{recall-cstt-renorm-spa},
\begin{align*}
&\tilde{Q}^n(y;y)-\mathfrak{c}^{(n)}_{\rho,\bh}
=-c_{\bh}^2 \int_{\R^{d}}  |\cfs\rho_n(\xi)|^2 \cn_{\bh}(\xi) \bigg(\int_0^\infty ds \, \cfs R(s,.)(\xi)\bigg) d\xi\, .
\end{align*}
Thus, thanks to \eqref{integr-r-spa} and to the uniform estimate $|\cfs\rho_n(\xi)| \lesssim 1$, we obtain
$$ 
\big|\tilde{Q}^n(y;y)-\mathfrak{c}^{(n)}_{\rho,\bh}\big| \lesssim 1 \le 2^{2\ell (d-H-1+\varepsilon)} \, .
$$

\smallskip

\noindent
\underline{Second case: $H=d-1$.} Due to the latter relation, it can be checked that
\begin{equation*}
\mathfrak{c}^{(n)}_{\rho,\bh}=c_{\bh}^2 \int_{|\xi|\geq 1}  |\mathcal{F}^{\textsc s}{\rho_n}(\xi)|^2   \cn_{\bh}(\xi)\bigg( \int_0^\infty ds\, \mathcal{F}^{\textsc s} p_s(\xi)\bigg)\, d\xi \, ,
\end{equation*}
and accordingly, by \eqref{decompo-fouri-kti},
\begin{align*}
&\tilde{Q}^n(y;y)-\mathfrak{c}^{(n)}_{\rho,\bh}\\
&=c_{\bh}^2 \bigg[\int_{|\xi|\leq 1}  |\cfs\rho_n(\xi)|^2 \cn_{\bh}(\xi) \cfs \kti(\xi) \, d\xi- \int_{|\xi|\geq 1}  |\cfs\rho_n(\xi)|^2 \cn_{\bh}(\xi) \bigg(\int_0^\infty ds \, \cfs R(s,.)(\xi)\bigg) \,  d\xi\bigg] \,.
\end{align*}
Using the results of Lemma \ref{lem:estim-fourier-kti} and Lemma \ref{lem:estim-fouri-r-spa}, we easily conclude that
\begin{align*}
& \big|\tilde{Q}^n(y;y)-\mathfrak{c}^{(n)}_{\rho,\bh}\big|\lesssim \int_{|\xi|\leq 1}  \cn_{\bh}(\xi) \,  d\xi+ \int_{|\xi|\geq 1}   \cn_{\bh}(\xi) \bigg|\int_0^\infty ds \, \cfs R(s,.)(\xi)\bigg| \, d\xi  \ \lesssim \ 1
\ \le \ 2^{2\ell \varepsilon} \, ,
\end{align*}
which corresponds to the desired bound in this case.
\end{proof}

The spatial counterpart of the central Proposition \ref{prop:estim-mom-second} now takes the following (expected) shape.
\begin{proposition}\label{prop:estim-mom-second-spa} 
Let $\widehat{\bw}^n$ be the renormalized $K$-rough path defined in the statement of Theorem \ref{theo:conv-k-rp-spa}. Then for all $\ell\geq 0$, $n\ge m \ge 0$, $\psi\in \cb^{2(d+1)}_\scal$, $x\in \R^{d}$ and $\varepsilon \in (0,1)$, it holds that
\begin{align}\label{eq:hatW^n-hatW^m-spa}
\mathbb{E}\Big[ |\langle \widehat{\bw}^{\mathbf{2},n}_{x}-\widehat{\bw}^{\mathbf{2},m}_{x},\psiti^\ell_{x} \rangle |^{2}\Big] \lesssim 2^{4\ell (d-H-1+\varepsilon)}2^{-m\varepsilon} \, ,
\end{align}
where the proportional constant in \eqref{eq:hatW^n-hatW^m-spa} does not depend on $n,m,\ell,x$.
\end{proposition}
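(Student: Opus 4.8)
The plan is to reproduce the proof of Proposition~\ref{prop:estim-mom-second} essentially word for word, replacing every space-time object by its spatial counterpart — which amounts to formally setting $H_0=1$ — and, as there, treating only $m=0$ (the general $m\geq 0$ following from the Lipschitz continuity of $\cfs\rho$ granted by Assumption~$(\rho)$). First I would apply Wick's formula to \eqref{notation-xi-2-spa}--\eqref{notationci-n-spa} and, after subtracting the renormalization constant and invoking Lemma~\ref{lem:decompo-esp-xi-2-spa}, arrive at
$$\mathbb{E}\big[|\langle \widehat{\bw}^{\mathbf{2},n}_{x},\psiti^\ell_{x}\rangle|^{2}\big]=\big(\langle \ce^n_{x},\psiti^\ell_{x}\rangle\big)^{2}+\mathcal{U}^{\ell,n}_{x}+\mathcal{V}^{\ell,n}_{x}\,,$$
where $\big(\langle \ce^n_{x},\psiti^\ell_{x}\rangle\big)^{2}\lesssim 2^{4\ell(d-H-1+\varepsilon)}$ by \eqref{estim-mathcal-e-spa}, and $\mathcal{U}^{\ell,n}_{x},\mathcal{V}^{\ell,n}_{x}$ are defined exactly as in the proof of Proposition~\ref{prop:estim-mom-second} but with $(t,y),(\tti,\yti)$ replaced by $y,\yti$, with $\psi^\ell_{s,x}$ replaced by $\psiti^\ell_{x}$, and with $\dw^n(t,y),\ci^n_{s,x}$ replaced by $\dw^n(y),\ci^n_{x}$. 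This reduces \eqref{eq:hatW^n-hatW^m-spa} for $m=0$ to the two bounds $|\mathcal{U}^{\ell,n}_{x}|\lesssim 2^{4\ell(d-H-1+\varepsilon)}$ and $|\mathcal{V}^{\ell,n}_{x}|\lesssim 2^{4\ell(d-H-1+\varepsilon)}$.

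Next I would carry the Fourier-side manipulations \eqref{boun-u-ell-det}--\eqref{cs-ell} over verbatim, using the covariance formulas \eqref{cova-xi-n-xi-n-spa}--\eqref{cova-i-n-i-n-spa} in place of \eqref{cova-xi-n-xi-n}--\eqref{cova-i-n-i-n}, replacing $d\la\,d\xi\,d\lati\,d\xiti$ by $d\xi\,d\xiti$ and $\cf K$ by $\cfs\kti$, and using the translation identity $\cfs\psiti^\ell_{x}(\xi)=\cfs\psiti^\ell_{0}(\xi)\,e^{-\imath\xi\cdot x}$ to cancel the $x$-dependence. The Cauchy--Schwarz step for $\mathcal{V}^{\ell,n}_{x}$ and the uniform bound $|\cfs\rho_n|\lesssim 1$ go through unchanged, and after the scaling coming from $\psiti^\ell_{0}(y)=2^{\ell d}\psiti(2^\ell y)$ the problem reduces to proving, for every $\varepsilon\in(0,1)$,
$$\tilde{\cs}^\ell:=\iint_{\R^d\times\R^d}d\xi\,d\xiti\,|\cfs\kti(2^\ell\xi)|^{2}\,\cn_{\bh}(\xi)\,\cn_{\bh}(\xiti)\,\big|\cfs\psiti(\xi+\xiti)-\cfs\psiti(\xiti)\big|^{2}\ \lesssim\ 2^{-4\ell(1-\varepsilon)}\,.$$

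To handle $\tilde{\cs}^\ell$ I would exploit the identity $\cfs\psiti(\xi)=\cf\psi(0,\xi)$ (immediate from \eqref{def:Fourier transform}--\eqref{def:space Fourier transform}), which lets the telescoping estimate \eqref{bou-var-psi-1-2} be applied verbatim with $\lati$ set to $0$ — the factor $\mathcal{T}^{(0)}(0)$ being then a harmless finite constant, there being no $\la$-difference, and hence neither an analog of \eqref{bou-var-psi-1-1} nor of the term $\mathcal{J}^{0,\ell}$. This reduces the claim to showing, for each $i=1,\ldots,d$,
$$\iint_{\R^d\times\R^d}d\xi\,d\xiti\,|\cfs\kti(2^\ell\xi)|^{2}\,\cn_{\bh}(\xi)\,\cn_{\bh}(\xiti)\prod_{j=1}^{i-1}\big(\mathcal{T}^{(j)}(\xiti_j)\big)^{2}\big(|\xi_i|^{2}\mathcal{Q}^{(i)}(\xi_i,\xiti_i)^{2}\big)\prod_{j=i+1}^{d}\big(\mathcal{T}^{(j)}(\xi_j+\xiti_j)\big)^{2}\ \lesssim\ 2^{-4\ell(1-\varepsilon)}\,,$$
with $\mathcal{T}^{(j)},\mathcal{Q}^{(i)}$ the very functions \eqref{defi-t-i}--\eqref{defi-q-i}, so that Lemma~\ref{lem:t-i-q-i-improv} applies unchanged. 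As before I would then split the $\xi$-domain along $D_{\mathbf{s}}=\prod_{k=1}^{d}D_{\mathbf{s}_k}$ for $\mathbf{s}\in\{-,+\}^{d}$, apply Lemma~\ref{lem:estim-fourier-kti} with parameters $a_1,\ldots,a_d\in[0,1]$ satisfying $\sum_{i=1}^d a_i=1-\varepsilon$ (which yields the prefactor $2^{-4\ell(1-\varepsilon)}$), and read off from Lemma~\ref{lem:t-i-q-i-improv} the admissibility conditions on the $a_i$ — namely the system \eqref{cond-a-1-2} with the $a_0$ (and $\mathbf{s}_0$) line deleted.

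The step I expect to be the main obstacle — just as the proof of \eqref{tech-bou-1-2} was in the space-time case — is checking that this system is solvable under the standing hypotheses $d-\frac32<H\leq d-1$ and $d\geq2$. Summing the constraints as in \eqref{condit-a-i}, the absence of the $a_0$-line turns the solvability into the pair of inequalities $\tilde{A}^i_{\mathbf{s}}<4(1-\varepsilon)+2H<\tilde{B}^i_{\mathbf{s}}$, with $\tilde{A}^i_{\mathbf{s}},\tilde{B}^i_{\mathbf{s}}$ the obvious specializations of \eqref{defi-a-+-i} (equivalently, the space-time $A^i_{\mathbf{s}},B^i_{\mathbf{s}}$ at $H_0=1$ with the $a_0$-contributions removed). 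Their verification then repeats the estimates following \eqref{condit-a-i}, with the simplification that no case distinction on $H_0$ is needed: the upper bound follows from $H\leq d-1$, and the lower bound from the inequalities $H_i>\frac12$ for $i\geq3$ and $H_1+H_2>\frac12$ (the spatial analogs of \eqref{observ-h-i} and \eqref{impli-h-2}, derived from $H>d-\frac32$, the ordering $H_1\leq\cdots\leq H_d$, and $d\geq2$ exactly as there). This is precisely where $d\geq2$ enters — just as it did for \eqref{impli-h-2} — and why the degenerate range $d=1$, for which $d-\frac32<H\leq d-1$ is vacuous, is excluded. Combining the resulting bounds on $\mathcal{U}^{\ell,n}_x,\mathcal{V}^{\ell,n}_x$ with \eqref{estim-mathcal-e-spa} then yields \eqref{eq:hatW^n-hatW^m-spa}.
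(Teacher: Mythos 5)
Your proposal is correct and follows the paper's proof step for step: same Wick decomposition into $(\langle\ce^n_x,\psiti^\ell_x\rangle)^2+\mathcal{U}^{\ell,n}_x+\mathcal{V}^{\ell,n}_x$, same Fourier-side reduction to $\tilde{\cs}^\ell\lesssim 2^{-4\ell(1-\varepsilon)}$, same domain splitting, same invocation of Lemmas~\ref{lem:estim-fourier-kti} and~\ref{lem:t-i-q-i-improv}, and the same constraint system~\eqref{condit-a-i-spa} settled via $H_1+H_2>\frac12$ and $H_i>\frac12$ for $i\geq 3$. The only place you depart is the telescoping step: rather than introducing the new purely spatial functions $\mathcal{T}^{(i)},\mathcal{Q}^{(i)}$ of~\eqref{defi-t-i-spa}--\eqref{defi-q-i-spa} (which the paper does, noting that Lemma~\ref{lem:t-i-q-i-improv} ``clearly remains true'' for them), you exploit $\cfs\psiti(\xi)=\cf\psi(0,\xi)$ to reuse the space-time~\eqref{defi-t-i}--\eqref{defi-q-i} at $\lati=0$, absorbing $\mathcal{T}^{(0)}(0)$ into the constant --- a slightly more economical presentation of the same estimate, since both sets of functions satisfy the bounds of Lemma~\ref{lem:t-i-q-i-improv} and the resulting admissibility conditions on $a_1,\ldots,a_d$ coincide.
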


\begin{proof}
Just as in the proof of Proposition \ref{prop:estim-mom-second}, we only focus on the proof of \eqref{eq:hatW^n-hatW^m-spa} for $m=0$.

Using the decomposition exhibited in Lemma \ref{lem:decompo-esp-xi-2-spa}, we get first
\begin{align*}
\mathbb{E}\big[ |\langle \widehat{\bw}^{\mathbf{2},n}_{x},\psiti^\ell_{x} \rangle |^{2}\big]&=\mathbb{E}\big[ |\langle \bw^{\mathbf{2},n}_{x}-\mathfrak{c}^{(n)}_{\rho,\bh},\psiti^\ell_{x} \rangle |^{2}\big]
=\big(\big\langle \mathcal{E}^n_{x},\psi^\ell_{x} \big\rangle  \big)^2+\mathcal{U}^{\ell,n}_{x}+\mathcal{V}^{\ell,n}_{x} \, ,
\end{align*}
where 
$$\mathcal{U}^{\ell,n}_{x}:=\iint_{\R^{d}\times \R^{d}} dy d\yti \, \psiti^\ell_{x}(y) \psiti^\ell_{x}(\yti)\mathbb{E}\big[\ci^n_{x}(y) \ci^n_{x}(\yti)  \big]\mathbb{E}\big[ \dw^{n}(y)\dw^{n}(\yti) \big]$$
and
$$\mathcal{V}^{\ell,n}_{x}:=\iint_{\R^{d}\times \R^{d}} dy d\yti \, \psiti^\ell_{x}(y) \psiti^\ell_{x}(\yti)\mathbb{E}\big[\ci^n_{x}(y)  \dw^{n}(\yti) \big]\mathbb{E}\big[\dw^{n}(y)\ci^n_{x}(\yti) \big] \ .$$
From here, and due to \eqref{estim-mathcal-e-spa}, the proof of \eqref{eq:hatW^n-hatW^m-spa} consists in checking that $|\mathcal{U}^{\ell,n}_{x} | +|\mathcal{V}^{\ell,n}_{x}| \lesssim 2^{4\ell (d-H-1+\varepsilon)}$.
In fact, we can follow line by line the arguments leading to \eqref{eq:uniform bound of UV by calS} (replacing of course \eqref{cova-xi-n-xi-n}-\eqref{cova-i-n-i-n} with \eqref{cova-xi-n-xi-n-spa}-\eqref{cova-i-n-i-n-spa}) to obtain that $|\mathcal{U}^{\ell,n}_{x}|+|\mathcal{V}^{\ell,n}_{x}|\lesssim 2^{4\ell(d-H)}\tilde{\cs}^\ell$, where
\begin{align}
\tilde{\cs}^\ell=
\iint_{\R^{d}\times \R^{d}} d\xi  d\xiti \, |\cf \kti(2^\ell \xi)|^2 &\cn_{\bh}(\xi) \cn_{\bh}(\xiti) \big| \cf\psiti(\xi+\xiti)-\cf\psiti(\xiti)\big|^2 \, .\label{cs-ell-spa}
\end{align}
Therefore, in view of \eqref{eq:hatW^n-hatW^m-spa}, it remains us to check that for any $\epsilon\in(0,1)$ we have
\begin{align}\label{eq:bound for tilde sc^l-spa}
\tilde{\cs}^\ell\lesssim 2^{-4\ell(1-\varepsilon)}. 
\end{align}
To this end, we can bound the difference $|\cf\psiti(\xi+\xiti)-\cf\psiti(\xiti)|$ in $\tilde{\cs}^\ell$ using the inequalities
\begin{align*}
&\big| \cf\psi(\xiti_1,\ldots,\xiti_{i-1},\xi_i+\xiti_i,\xi_{i+1}+\xiti_{i+1},\ldots,\xi_d+\xiti_d)\\
&\hspace{5cm}
-\cf\psi(\xiti_1,\ldots,\xiti_{i-1},\xiti_i,\xi_{i+1}+\xiti_{i+1},\ldots,\xi_d+\xiti_d)\big|\\
&\hspace{3cm}\lesssim \prod_{j=1}^{i-1} \mathcal{T}^{(j)}(\xiti_j) \,\left(|\xi_i| \cdot\mathcal{Q}^{(i)}(\xi_i,\xiti_i)\right) \,\prod_{j=i+1}^{d} \mathcal{T}^{(j)}(\xi_j+\xiti_{j})
 \, , \quad \quad i=1,\ldots,d,
\end{align*}
where, for $\lambda,\tilde{\lambda}\in\R$, the quantities $\mathcal{T}^{(i)}(\la)$ and $\mathcal{Q}^{(i)}(\la,\tilde{\la})$ are here defined by
\begin{equation}\label{defi-t-i-spa}
\mathcal{T}^{(i)}(\la):=\bigg(\int_{\R^{d}} dy \, |(\partial_{x_1}\cdots \partial_{x_d} \psiti)(y) | \bigg| \int_0^{y_i} dz_i \, e^{-\imath \la z_i}  \bigg|^{d}\bigg)^{1/d} \, ,
\end{equation}
\begin{equation}\label{defi-q-i-spa}
\mathcal{Q}^{(i)}(\la,\tilde{\la}):=\bigg(\int_{\R^{d}} dy \, |(\partial_{x_1}\cdots \partial_{x_d} \psi)(y) | \bigg| \int_0^{y_i} dz_i\int_0^{z_i} dw_i \, e^{-\imath \tilde{\la} z_i} e^{-\imath \la w_i} \bigg|^{d}\bigg)^{1/d} \, .
\end{equation}
With those notations, the claim \eqref{eq:bound for tilde sc^l-spa} reduces to showing that for every fixed $i=1,\ldots,d$, we have
\begin{multline}
\mathcal{J}^{i,\ell}:=\iint_{\R^{d}\times \R^{d}} d\xi  d\xiti \, |\cf \kti(2^\ell \xi)|^2 \cn_{\bh}(\xi) \cn_{\bh}(\xiti) 
\\
\times  
\prod_{j=1}^{i-1}\left(\mathcal{T}^{(j)}(\xiti_j)\right)^2 
\left(|\xi_i|^2 \mathcal{Q}^{(i)}(\xi_i,\xiti_i)^2\right)
\prod_{j=i+1}^d\left(\mathcal{T}^{(j)}(\xi_{j}+\xiti_{j})\right)^2
 \lesssim  2^{-4\ell (1-\varepsilon)}\, .\label{tech-bou-1-2-spa}
\end{multline}
Let us again follow the pattern of the proof of Proposition \ref{prop:estim-mom-second} and split the integration domain for the variables $\xi_1,\ldots,\xi_d$ along $D_{-}:=\{\la\in \R: \, |\la|\leq 1\}$ and $D_+:=\{\la\in \R: \, |\la| \geq 1\}$. In other words, we set, for every $\mathbf{s}\in \{-,+\}^{d}$, $D_{\mathbf{s}}:=\prod_{k=1}^d D_{\mathbf{s}_k}$,
and then consider, for every $i=1,\ldots,d$, 
\begin{multline*}
\mathcal{J}^{i,\ell}_{\mathbf{s}}:=\iint_{D_{\mathbf{s}}\times \R^{d}} d\xi  d\xiti \, |\cf \kti(2^\ell \xi)|^2 \cn_{\bh}(\xi) \cn_{\bh}(\xiti) 
\\
\times  
\prod_{j=1}^{i-1}\left(\mathcal{T}^{(j)}(\xiti_j)\right)^2 
\left(|\xi_i|^2 \mathcal{Q}^{(i)}(\xi_i,\xiti_i)^2\right)
\prod_{j=i+1}^d\left(\mathcal{T}^{(j)}(\xi_{j}+\xiti_{j})\right)^2\, .
\end{multline*}
By applying Lemma \ref{lem:estim-fourier-kti}, we can assert that for all $a_1,\ldots,a_d\in [0,1]$ such that $a_1+\ldots+a_d <1$,
\begin{align*}
\cj^{i,\ell}_{\mathbf{s}}&\lesssim 2^{-4\ell(a_1+\ldots+a_d)}  \prod_{r=1}^{i-1} \bigg( \int_{\R} d\xiti_r \, \frac{\mathcal{T}^{(r)}(\xiti_r)^2}{|\xiti_r|^{2H_r-1}} \bigg)\times\prod_{k=1}^{i-1}\bigg( \int_{D_{\mathbf{s}_k}} \frac{d\xi_k}{|\xi_k|^{4a_k+2H_k-1}} \bigg)\\
&\ \ \times \bigg(\int_{D_{\mathbf{s}_i}\times \R} d\xi_i d\xiti_i \, \frac{\mathcal{Q}^{(i)}(\xi_i,\xiti_i)^2}{|\xi_i|^{(4a_i+2H_i-2)-1}|\xiti_i|^{2H_i-1}} \bigg)\prod_{p=i+1}^d \bigg( \int_{D_{\mathbf{s}_p}\times \R} d\xi_p d\xiti_p \, \frac{\mathcal{T}^{(p)}(\xi_{p}+\xiti_{p})^2}{|\xi_p|^{4a_p+2H_p-1} |\xiti_p|^{2H_p-1}} \bigg) \, .
\end{align*}
Based on  the criteria of Lemma \ref{lem:t-i-q-i-improv} (which clearly remain true for $\mathcal{T}^{(i)}$ and $\mathcal{Q}^{(i)}$ defined by \eqref{defi-t-i-spa}-\eqref{defi-q-i-spa}), we deduce the following conditions on $a_1,\ldots,a_d$ (to ensure finiteness of the above integrals):
\begin{equation}\label{cond-a-1-2-spa}
\left\{
\begin{array}{ll}
2H_k<4a_k+2H_k <2 & \text{for} \ k\in \{k\in \{1,\ldots,i-1\}: \, \mathbf{s}_k=-\}\\
2<4a_k+2H_k <4+2H_k& \text{for} \ k\in \{k\in \{1,\ldots,i-1\}: \, \mathbf{s}_k=+\} \\
\max(2H_i,3-2H_i)<4a_i+2H_i <4\\
2H_p<4a_p+2H_p <2 & \text{for} \ p\in \{p\in \{i+1,\ldots,d\}: \, \mathbf{s}_p=-\}\\
3-2H_p<4a_p+2H_p <4+2H_p& \text{for} \ p\in \{p\in \{i+1,\ldots,d\}: \, \mathbf{s}_p=+\} .
\end{array}
\right.
\end{equation}
With \eqref{tech-bou-1-2-spa} in mind, we need these inequalities to be also consistent with the relation $\sum_{k=1}^d a_k=1-\varepsilon$. The combination of these two constraints thus leads us to the condition
\begin{align}\label{condit-a-i-spa}
A^i_{\mathbf{s}}< 4(1-\varepsilon)+2H<B^i_{\mathbf{s}}\, ,
\end{align}
with two parameters $A^i_{\mathbf{s}}, B^i_{\mathbf{s}}$ defined by
\begin{align}
A^i_{\mathbf{s}}&:=2\sum_{\substack{k=1,\ldots,i-1\\\mathbf{s}_k=-}} H_k+2\big|\{k\in \{1,\ldots,i-1\}: \, \mathbf{s}_k=+\}\big|+\max(2H_i,3-2H_i)\nonumber\\
&\hspace{2cm}+2\sum_{\substack{p=i+1,\ldots,d\\ \mathbf{s}_p=-}} H_p+\sum_{\substack{p=i+1,\ldots,d\\\mathbf{s}_p=+}} (3-2H_p)\, ,\label{defi-a-+-i-spa}
\end{align}
\begin{align*}
B^i_{\mathbf{s}}&:=2\big|\{k\in \{1,\ldots,i-1\}: \, \mathbf{s}_k=-\}\big|+\sum_{\substack{k=1,\ldots,i-1\\\mathbf{s}_k=+}} (4+2H_k)\\
&\hspace{3cm}+4+2\big|\{p\in \{i+1,\ldots,d\}: \, \mathbf{s}_p=-\}\big|+\sum_{\substack{p=i+1,\ldots,d\\\mathbf{s}_p=+}} (4+2H_p)\, .
\end{align*}
Before checking \eqref{condit-a-i-spa}, observe that due to condition \eqref{strengthened 4.17-spa}, it holds that $d-\frac32<H<H_1+H_2+(d-2)$ (recall that $d\geq 2$), and so
\begin{equation}\label{sum-h-1-h-2}
H_1+H_2>\frac12 \, .
\end{equation}
Besides, for symmetry reasons, we can assume (from the beginning) that $H_1\leq H_2 \leq \ldots \leq H_d$, and consequently, for $d\geq 3$ and $i\geq 3$, $d-\frac32<H<H_1+H_2+H_3+(d-3)\leq 3H_i+(d-3)$, so that
\begin{equation}\label{observ-h-i-spa}
H_i >\frac12 \quad \text{for any} \ i \geq 3 \, .
\end{equation}
Let us now back to the verification of \eqref{condit-a-i-spa}. In order to see that $A^i_{\mathbf{s}}<4+2H$, observe first that 
\begin{equation}\label{bound-a-i-s-spa}
A^i_{\mathbf{s}}<2(i-1)+\sum_{q=i}^d\max(2H_q,3-2H_q) \, .
\end{equation}
By \eqref{observ-h-i-spa}, we immmediately deduce that for $i\geq 3$, $A^i_{\mathbf{s}}<2(i-1)+2(d-i+1)=2d<4+2H$, where the last inequality stems from the assumption $d-\frac32<H$. Then, using again \eqref{bound-a-i-s-spa} and \eqref{observ-h-i-spa}, 
\begin{align*}
&A^2_{\mathbf{s}}<2d-2+\max(2H_2,3-2H_2)<2d+1<4+2H \, .
\end{align*}
Finally, for $A^1_{\mathbf{s}}$, we get by \eqref{bound-a-i-s-spa} and \eqref{observ-h-i-spa} that
\begin{align*}
A^1_{\mathbf{s}}&<\max(2H_1,3-2H_1)+\max(2H_2,3-2H_2)+2(d-2)\\
&<2d-4+\max(2,3-2H_1)+\max(2,3-2H_2)\\
&\leq 2d-4+\max(4,5-2H_1,5-2H_2,6-2(H_1+H_2))<2d+1<4+2H \, ,
\end{align*}
where we have used \eqref{sum-h-1-h-2} to get the fourth inequality.

\smallskip

For the second inequality in \eqref{condit-a-i-spa}, let us write $B^i_{\mathbf{s}}$ as
\begin{align*}
B^i_{\mathbf{s}}&=2(i-1)+2\sum_{\substack{k=1,\ldots,i-1\\\mathbf{s}_k=+}} (1+H_k)+4+2(d-i)+2\sum_{\substack{p=i+1,\ldots,d\\\mathbf{s}_p=+}} (1+H_p)\\
&=2d+2+2\sum_{\substack{k=1,\ldots,i-1\\\mathbf{s}_k=+}} (1+H_k)+2\sum_{\substack{p=i+1,\ldots,d\\\mathbf{s}_p=+}} (1+H_p)\, ,
\end{align*}
and now it becomes clear that $B^i_{\mathbf{s}}>2+2d\geq 4+2H$, since $H\leq d-1$.

\smallskip

This completes the proof of \eqref{condit-a-i-spa}, and accordingly the proof of \eqref{tech-bou-1-2-spa} and \eqref{eq:hatW^n-hatW^m-spa}.

\end{proof}

\subsection{Conclusion: proof of Theorem \ref{theo:conv-k-rp-spa}.}

With Propositions \ref{prop:estim-mom-first-spa} and \ref{prop:estim-mom-second-spa} in hand, we are exactly in the same position as in Section \ref{subsec:conclusion}, and accordingly we can reproduce the exact same reasoning in order to conclude.

\section{Appendix}

\subsection{Proof of Lemma \ref{lem:cstt-renorm}}\label{subsec:proof-renorm-cstt-tech}

We only focus on the treatment of $\cj_{\rho,H_0,\bh}$ (defined in \eqref{eq:cj-c-n}) when $2H_0+H<d+1$. It should however be clear to the reader that the subsequent arguments could also be used to prove the finiteness of the integral in \eqref{eq:cj-c-n-border} when $2H_0+H=d+1$.

\smallskip

According to the definition \eqref{eq:heat-kernel} of the heat kernel $p$ and recalling that $\mathcal{F}$ stands for the space-time Fourier transform, it is readily checked that for $(\la,\xi)\in\R^{d+1}$ we have
\begin{equation}\label{eq:fourier-p}
\mathcal F{p}(\la,\xi)=\lp \frac{|\xi|^2}{2}+\imath \la\rp^{-1}.
\end{equation}
Therefore, the integral under consideration can be bounded as 
\begin{align}\label{bound:J_rho H}
\cj_{\rho,H_0,\bh} \leq \cj_{\infty} +\cj_{0},
\end{align}
where we consider a compact region $\cd_\scal$ of $\R^{d+1}$ defined by
\begin{align}\label{def: calD_s}\cd_\scal:=\{(\la,\xi)\in \R^{d+1}: \, \la^2+\xi_1^4+\cdots+\xi_d^4\leq 1\},\end{align}
and where the quantities $\cj_\infty, \cj_0$ are respectively defined by
\begin{eqnarray}\label{def:J_0}
\cj_{\infty}
&:=&
\int_{\R^{d+1}\backslash \cd_\scal}  \frac{ d\la d\xi}{(\la^2+\xi_1^4 +\cdots+\xi_d^4)^{1/2}}|\mathcal F{\rho}(\la,\xi)|^2 \cn_{H_0,\bh}(\la,\xi)
\notag\\
\cj_{0}
&:=&
\int_{\cd_\scal}  \frac{ d\la d\xi}{(\la^2+\xi_1^4 +\cdots+\xi_d^4)^{1/2}}|\mathcal F{\rho}(\la,\xi)|^2 \cn_{H_0,\bh}(\la,\xi) \, .
\end{eqnarray}
We now proceed to the evaluation of those two terms.

In order to estimate $\cj_\infty$, note that $(\R^{d+1}\backslash \cd_\scal) \subset \cup_{i=0}^d \Lambda_i$, where the regions $\Lambda_i$ are defined by
$$\Lambda_0:=\left\{(\la,\xi_1,\ldots,\xi_d): \, \la^2\geq \frac{1}{d+1}\right\} \quad\text{and} \quad \Lambda_i:=\left\{(\la,\xi_1,\ldots,\xi_d): \, \xi_i^4\geq \frac{1}{d+1}\right\} \, .$$
According to this decomposition we write 
\begin{align}\label{def: decompose J_infty}
\cj_{\infty}\leq\sum_{i=0}^d\cj_{\infty,i},
\end{align}
where the terms $\cj_{\infty,i}$ can be written as
\begin{align}\label{def:J_infty i}
\cj_{\infty,i}:=\int_{\Lambda_i}  \frac{ d\la d\xi}{(\la^2+\xi_1^4 +\cdots+\xi_d^4)^{1/2}}|\mathcal F{\rho}(\la,\xi)|^2 \cn_{H_0,\bh}(\la,\xi) \, .
\end{align}
Let us now show how to bound $\cj_{\infty,0}$ above. To this aim we invoke our bound \eqref{asympt-rho} in two different ways.  Namely we take $\tau_0=1$, and $\tau_i=0$ if $|\xi_i|\leq 1$, while $\tau_i=1$ if $|\xi_i|\geq 1$. Together with the trivial inequality $\la^2+\sum_{i=1}^d\xi_i^4\geq \la^2$, the term $\cj_{\infty,0}$ given in \eqref{def:J_infty i} can be bounded as follows
\begin{align}\label{bound:cj_infty0}
\cj_{\infty,0}\lesssim \bigg(\int_{\la^2\geq \frac{1}{d+1}} \frac{ d\la }{|\la|^{2H_0+2}}\bigg) \prod_{i=1}^d \bigg\{ \int_{|\xi_i|\leq 1} \frac{d\xi_i}{|\xi_i|^{2H_i-1}}+\int_{|\xi_i|\geq 1} \frac{d\xi_i}{|\xi_i|^{2H_i+1}} \bigg\} \ < \ \infty \, ,
\end{align}
where the last inequality is immediate. The terms $\cj_{\infty,i}$ for $i=1,\dots,d$ in \eqref{def:J_infty i} are handled similarly, and we omit the details for the sake of conciseness. Taking into account the upper bound~\eqref{def: decompose J_infty}, we end up with the relation $\cj_\infty<\infty$.

We now turn to a bound on $\cj_0$ defined by \eqref{def:J_0}, for which we invoke \eqref{asympt-rho} with $\tau_i=0,$ for all $ i=0,\dots,d$. We get
\begin{equation}\label{cj-g}
\cj_0\lesssim\int_{\cd_\scal \cap \R_+^{d+1}}  \frac{ d\la d\xi}{(\la^2+\xi_1^4 +\cdots+\xi_d^4)^{1/2}}\cn_{H_0,\bh}(\la,\xi)\, .
\end{equation}
To see that the latter integral is indeed finite, let us set $\tilde{\xi}_i:=\xi_i^2$, so that $(\la,\xi_1,\ldots,\xi_d)\in \cd_\scal \cap \R_+^{d+1}$ if and only if $(\la,\tilde{\xi}_1,\ldots,\tilde{\xi}_d)\in \cb(0,1) \cap \R_+^{d+1}$, where $\cb(0,1)$ stands for the standard Euclidean unit ball. This yields   
\begin{equation}\label{cj-g-bis}
\cj_0 \lesssim \int_{\cb(0,1)\cap \R_+^{d+1}}  \frac{d\la d\xi}{(\la^2+\tilde{\xi}_1^2 +\cdots+\tilde{\xi}_d^2)^{1/2}} \frac{1}{|\la|^{2H_0-1}} \prod_{i=1}^d \frac{1}{|\tilde{\xi}_i|^{H_i}}\lesssim \int_0^1 \frac{dr}{r^{2H_0+H-d}} \, , 
\end{equation}
where we have used spherical coordinates to derive the last inequality. The finiteness of $\cj_0$ now follows from the assumption $2H_0+H<d+1$.

Summarizing our computations, we have seen that $\cj_0<\infty$ and $\cj_\infty<\infty$. Recalling relation \eqref{bound:J_rho H}, this proves our claim $\cj_{\rho,H_0,\bh}<\infty$.

\subsection{Proof of Proposition \ref{prop:asymp-renorm-cstt}}\label{subsec:proof-asymp}

Let us decompose the integral under consideration as
\begin{align}
&\int_{|\la|+|\xi|^2\geq 2^{-2n}} |\mathcal F{\rho}(\la,\xi)|^2  \mathcal F{p}(\la,\xi) \cn_{H_0,\bh}(\la,\xi)\, d\la d\xi=\int_{2^{-2n}\leq |\la|+|\xi|^2\leq 1}   \mathcal F{p}(\la,\xi) \cn_{H_0,\bh}(\la,\xi)\, d\la d\xi\nonumber\\
&\hspace{3.5cm}+\int_{2^{-2n}\leq |\la|+|\xi|^2\leq 1}  \big\{|\mathcal F{\rho}(\la,\xi)|^2-1\big\}  \mathcal F{p}(\la,\xi) \cn_{H_0,\bh}(\la,\xi)\, d\la d\xi+O(1).\label{proof-cstt-bord-1}
\end{align}

Using a series of elementary changes of variable, we get, for some constant $C_{H_0,\bh}$ that may change from line to line,
\begin{align*}
&\int_{2^{-2n}\leq |\la|+|\xi|^2\leq 1}   \mathcal F{p}(\la,\xi) \cn_{H_0,\bh}(\la,\xi)\, d\la d\xi=\int_{2^{-2n}\leq |\la|+|\xi|^2\leq 1} \frac{d\la d\xi}{\frac{|\xi|^2}{2}+\imath \la} \frac{1}{|\la|^{2H_0-1}} \prod_{i=1}^d \frac{1}{|\xi_i|^{2H_i-1}}\\
&\hspace{3cm}=C_{H_0,\bh} \int_0^\infty dr \int_{2^{-2n}\leq |\la|+r^2\leq 1}\frac{d\la }{\frac{r^2}{2}+\imath \la} \frac{r^{2d-2H-1}}{|\la|^{2H_0-1}}\\
&\hspace{3cm}=C_{H_0,\bh} \int_0^\infty dr \int_0^\infty d\la\,  \1_{2^{-2n}\leq \la+r^2\leq 1}\bigg[\frac{1}{\frac{r^2}{2}+\imath \la} +\frac{1}{\frac{r^2}{2}-\imath \la} \bigg] \frac{r^{2d-2H-1}}{|\la|^{2H_0-1}}\\
&\hspace{3cm}=C_{H_0,\bh} \int_0^\infty dr \int_0^\infty d\la\,  \1_{2^{-2n}\leq \la+r^2\leq 1}\bigg(\frac{r^2}{\frac{r^4}{4}+ \la^2}\bigg) \frac{r^{2d-2H-1}}{|\la|^{2H_0-1}}\\
&\hspace{3cm}=C_{H_0,\bh} \int_0^\infty dr \int_0^\infty d\la\,  \1_{2^{-2n}\leq \la^2+r^2\leq 1}\frac{\la}{\frac{r^4}{4}+ \la^4} \frac{r^{2d-2H+1}}{|\la|^{4H_0-2}}\\
&\hspace{3cm}=C_{H_0,\bh} \bigg(\int_0^\infty d\rho  \frac{\1_{2^{-2n}\leq \rho^2\leq 1}}{\rho^{2(2H_0+H)-2d-1}}\bigg) \bigg(\int_0^{\frac{\pi}{2}} \, \frac{d\theta }{\frac{\cos^4 \theta}{4}+ \sin^4 \theta} \frac{(\cos \theta)^{2d-2H+1}}{(\sin \theta)^{4H_0-3}}\bigg)
\end{align*}
and so, recalling that $2H_0+H=d+1$, we end up with
\begin{equation}\label{proof-cstt-bord-2}
\int_{2^{-2n}\leq |\la|+|\xi|^2\leq 1}   \mathcal F{p}(\la,\xi) \cn_{H_0,\bh}(\la,\xi)\, d\la d\xi=C_{H_0,\bh} \bigg(\int_{2^{-n}}^1 \frac{d\rho}{\rho}  \bigg)=C_{H_0,\bh} \cdot n \, . 
\end{equation}

\

On the other hand, thanks to Assumption $(\rho)$-$(i)$-$(ii)$, we have
\begin{align*}
&\int_{2^{-2n}\leq |\la|+|\xi|^2\leq 1}  \big| |\mathcal F{\rho}(\la,\xi)|^2-1\big|  \big| \mathcal F{p}(\la,\xi)\big| \cn_{H_0,\bh}(\la,\xi)\, d\la d\xi\\
&=\int_{2^{-2n}\leq |\la|+|\xi|^2\leq 1}  \big| |\mathcal F{\rho}(\la,\xi)|^2-|\mathcal F{\rho}(0,0)|^2\big|  \big| \mathcal F{p}(\la,\xi)\big| \cn_{H_0,\bh}(\la,\xi)\, d\la d\xi\\
&\lesssim \int_{0\leq |\la|+|\xi|^2\leq 1}  \big\{ |\la|+|\xi|\big\} \big| \mathcal F{p}(\la,\xi)\big| \cn_{H_0,\bh}(\la,\xi)\, d\la d\xi\\
&\lesssim \int_0^\infty dr \int_0^\infty d\la \, \1_{0\leq \la+r^2\leq 1}\,   \big\{ \la+r\big\} \frac{r^{2d-2H-1}}{r^2+\la} \frac{1}{\la^{2H_0-1}}\\
&\lesssim \int_0^\infty dr \int_0^\infty d\la \, \1_{0\leq \la^2+r^2\leq 1}\,  \la \big\{ \la^2+r\big\} \frac{r^{2d-2H-1}}{r^2+\la^2} \frac{1}{\la^{4H_0-2}}\\
&\lesssim \int_{0\leq \rho^2\leq 1} d\rho \, \rho^3 \frac{\rho^{2d-2H-1}}{\rho^2} \frac{1}{\rho^{4H_0-2}} \lesssim \int_{0\leq \rho^2\leq 1} \frac{d\rho}{\rho^{2(2H_0+H)-2d-2}} \lesssim 1 \, , 
\end{align*}
where the last inequality is immediately derived from the assumption $2H_0+H=d+1$. Thus,
\begin{equation}\label{proof-cstt-bord-3}
\sup_{n\geq 1} \bigg|\int_{2^{-2n}\leq |\la|+|\xi|^2\leq 1}  \big\{|\mathcal F{\rho}(\la,\xi)|^2-1\big\}  \mathcal F{p}(\la,\xi) \cn_{H_0,\bh}(\la,\xi)\, d\la d\xi\bigg| \ < \ \infty \, .
\end{equation}

\smallskip

Finally, injecting \eqref{proof-cstt-bord-2} and \eqref{proof-cstt-bord-3} into \eqref{proof-cstt-bord-1}, we deduce the desired decomposition \eqref{decompo-cstt-border}.

\bigskip

\end{document}